\documentclass[a4paper,11pt]{article}
\usepackage[utf8]{inputenc}

\usepackage{amsthm}
\usepackage{amsmath}
\usepackage{amssymb}
\usepackage{bbm}
\usepackage{mathtools}
\usepackage[pdfencoding=auto,hyperindex,pdftex]{hyperref}
\usepackage{fancyhdr}
\usepackage{pdfpages}
\usepackage{MnSymbol}
\usepackage{subcaption}
\usepackage{tikz}
\usetikzlibrary{math, calc,fpu}


\newcommand{\supp}{\mathrm{supp\,}}

\newcommand{\INT}{\mathrm{int}}
\newcommand{\x}{\times}
\newcommand{\p}{\partial}

\newcommand{\id}{\mathrm{Id}}
\renewcommand{\d}{\mathrm{d}}

\newcommand{\eqnb}{\begin{equation}}
\newcommand{\eqnbs}{\begin{equation*}}
\newcommand{\eqnbsa}{\begin{equation*}\begin{aligned}}
\newcommand{\eqnba}{\begin{equation}\begin{aligned}}
\newcommand{\eqnbl}[1]{\begin{equation}\label{#1}}
\newcommand{\eqnbal}[1]{\begin{equation}\label{#1}\begin{aligned}}
\newcommand{\eqnes}{\end{equation*}}
\newcommand{\eqne}{\end{equation}}
\newcommand{\eqnesa}{\end{aligned}\end{equation*}}
\newcommand{\eqnea}{\end{aligned}\end{equation}}
\newcommand{\loc}{\mathrm{loc}}
\newcommand{\re}[1]{(\ref{#1})}
\newcommand{\comment}[1]{}

\newcommand{\partref}[2]{\hyperref[#1]{\ref*{#1}#2}}

\newcommand{\FiguresPath}{}

\newcommand{\RR}{\mathbb{R}}

\newcommand{\cR}{\mathcal{R}}

\newcommand{\NN}{\mathbb{N}}

\newcounter{cntNotation}

\newtheorem{theorem}{Theorem}
\newtheorem{proposition}[theorem]{Proposition}
\newtheorem{definition}[theorem]{Definition}
\newtheorem{notation}[cntNotation]{Notation}
\newtheorem{lemma}[theorem]{Lemma}
\newtheorem{corollary}[theorem]{Corollary}

{\bf}{\it}

\title{Non-conservation of dimension in divergence-free solutions of passive and active scalar systems}
\author{Charles L. Fefferman$^1$,  Benjamin C. Pooley$^2$ \& Jos\'e L. Rodrigo$^3$}
\date{\it$^1$Department of Mathematics, Princeton University,\\ Princeton NJ, 08544\\
$^{2,3}$Mathematics Institute, University of Warwick,\\ Coventry, CV4 7AL}
\begin{document}
\maketitle

\abstract{For any $h\in(1,2]$, we give an explicit construction of a compactly supported, uniformly continuous, and (weakly) divergence-free velocity field in $\RR^2$ that weakly advects a measure whose support is initially the origin but for positive times has Hausdorff dimension $h$. 

These velocities are uniformly continuous in space-time and compactly supported, locally Lipschitz except at one point and satisfy the conditions for the existence and uniqueness of a  Regular Lagrangian Flow in the sense of Di Perna and Lions theory.

We then construct active scalar systems in $\RR^2$ and $\RR^3$ with measure-valued solutions whose initial support has co-dimension 2 but such that at positive times it only has co-dimension 1. The associated velocities are divergence free, compactly supported, continuous, and sufficiently regular to admit unique Regular Lagrangian Flows.

This is in part motivated by the investigation of dimension conservation for the support of measure-valued solutions to active scalar systems. This question occurs in the study of vortex filaments in the three-dimensional Euler equations.  
}
\section{Introduction}

The classical transport equation describing the advection of a quantity $\omega:[0,T]\x\RR^d\to\RR$ by a velocity $u:[0,T]\x\RR^d\to\RR^d$ is given by
\begin{equation}\label{eqTransportClassical}
\p_t\omega+(u\cdot\nabla)\omega=0.
\end{equation}
In many examples this may form part of either a {\it passive scalar} or an {\it active scalar} system depending on whether or not $u$ is dependent on $\omega$. In the case that $u$ is divergence free (and sufficiently regular), this system is equivalent to a continuity equation:
\[
\p_t\omega+\nabla\cdot (\omega u)=0.
\]

The connection between the Eulerian and Lagrangian viewpoints for \re{eqTransportClassical} have been widely studied. In the classical theory, if $u$ is uniformly Lipschitz, or satisfies an Osgood-type condition then the Lagrangian trajectories defined by
\begin{equation}\label{eqLagClassical}
\frac{\d }{\d t} X(t,s,a)=u(t,X(t,s,a)),\quad X(s,s,a)=a
\end{equation}
exist for all $a\in\RR^d$ and are unique (see  \cite{Hartman_Book}, for example). In this case, $X(t,s,a)$ are the characteristics for \re{eqTransportClassical}, i.e\  $\omega(t,X(t,s,a))=\omega(s,a)$. To save notation, we will usually write $X(t,0,a)\eqqcolon X(t,a)$.

Di Perna and Lions \cite{DiPerna_Lions_1989}, proved that if $u\in L^1(0,T;W^{1,1}_\loc(\RR^d))$ with $\nabla\cdot u\in L^1(0,T;L^\infty)$ and
\begin{equation}\label{eqDLgrowth_condition}
\frac{|u|}{1+|x|}\in L^1(0,T;L^1)+L^1(0,T;L^\infty)
\end{equation}
then there exists a semigroup  $Y(t+\tau,s,\cdot)=Y(t+\tau,\tau,Y(\tau,s,\cdot))$, with $Y(s,s,a)=a$, that define a trajectory map in the sense that \[
(t,Y(t,a))\mapsto\omega_0(a)
\] is a distributional solution of \re{eqTransportClassical}, for any  $\omega_0$ in a particular subspace of $C^1(\RR^2)$. This solution is unique subject to growth and decay bounds on the push-forward of the Lebesgue measure $X\#\lambda$. For these solutions we also have that \re{eqLagClassical} holds for almost every $a$ where the time derivative is taken in the sense that $t\mapsto X(t,a)$ is absolutely continuous, i.e.
\begin{equation}\label{eqLagAC}
X(t,a)=a+\int_0^t u(s,X(s,a))\,\d s
\end{equation}
for all $t\in[0,T]$ and almost all $a$.

These results have been extended to the case of less regular vectorfields, for example by Ambrosio and co-authors, see \cite{Ambrosio_2017} and references therein. In particular, the existence and uniqueness theory for so-called regular Lagrangian flows extends to $u\in L^1(0,T; BV_\loc)$ satisfying \re{eqDLgrowth_condition} and $(\nabla\cdot u)^-\in L^1(0,T;L^\infty)$ \cite{Ambrosio_2004a}.

An example of non-uniqueness of trajectory maps for a non-$BV$ fields in $\RR^d$, $d\geq 3$ was already known due to Aizenman \cite{Aizenman_1978b}. In brief, he combined rescaled copies of a velocity whose associated flow was piecewise affine at $t=1/4$ to construct a measure-preserving velocity that mapped line segments of the form $[0,1]\x\{y\}\x\{1\}$ to points in $[0,1]\x[0,1]\x\{0\}$ in finite time, thus allowing any measurable permutation of trajectories on a line segment at time $t=1/2$. 

Also of note are results giving conditions on a velocity to guarantee that almost evey trajectory avoids a set, depending on its (co)dimension. Aizenman showed that a measure-preserving flow $X$, corresponding to a time-independant $u$ in the sense of \re{eqLagAC} {\it avoids} a set $A$, in a specified sense, if $u\in L^p(\RR^d)$ and
\[
\frac{1}{p}+\frac{1}{C(A)}<1
\]
where $C(A)$ is essentially the co-box-dimension $d-\dim_B(A)$.

This has been extended to time-dependent flows by Robinson\ et.\ al. \cite{JCR_Sharples_2013} using the notion of {\it $r$-dimensional prints}.  \cite{JCR_WS_2009},\cite{JCR_Sad_2009b}, and \cite{JCR_Sad_Sharples_2013}, in which such  sufficient conditions for a flow to avoid a subset of $[0,\infty)\x\RR^3$ are combined with partial regularity results for the Navier--Stokes equations to yield uniqueness of almost every trajectory for {\it suitable weak solutions}.

In Section \ref{secPassiveScalarH} we construct a uniformly continuous, compactly supported, divergence-free, and time-dependent velocity in $\RR^2$, such that trajectory of the origin is not unique in the following sense. There is a time-dependent family of measures $\omega(t)$ with $\omega(0)=\delta_0$ that is advected by $u$ in a distributional sense, and $\dim_H\supp\omega(t)=2$ for $t>0$. We show that we may even take $u$ locally Lipschitz in $[0,1]\x\RR^2$ away from $(0,(0,0))$, in the case that $\dim_H\supp\omega(t)=h\in[1,2)$.

The velocities constructed in this way naturally fall within the regime of Di Perna and Lions, as they are weakly divergence-free, compactly supported and belong to $L^q(0,1;W^{1,p})$ for a specific range of vaules $p\in[1,\infty)$, $1\leq q\leq\infty$ (see Propositions \ref{propWregularity} and \ref{propVhatRegularity}). In particular we may take $p=q=1$ in all examples. Therefore, the constructed velocities each admit a unique regular Lagrangian flow in the sense of Di Perna and Lions, despite the fact that the dimension of advected sets may have jump discontinuities.

In Sections \ref{secActiveScalars2D} and \ref{secActiveScalars3D}, we start by considering active scalar systems in two dimensions i.e.\ \re{eqTransportClassical} coupled with the relation
\begin{equation}\label{eqUConvolution}
u(t,x)=\int_{\RR^2} K(x-y)\ \d\omega(t)(y),
\end{equation}
for some continuous kernel $K$.  We choose $K$ so that there exists a solution $\omega$ with $\omega(0)=\delta_0$ and $\dim_H\supp\omega(t)=1$ for $t>0$. We do this in such a way that $u$ is uniformly continuous, compactly supported and divergence free. The resulting velocities $u$ also belong to $L^\infty(0,T;W^{1,1}(\RR^d))$ by Proposition \ref{propSlitURegularity}. Hence the velocity also admits a regular Lagrangian flow, in the sense of Di Perna and Lions.

Finally, we adapt the two-dimensional example to construct a vector-valued $\omega$ in three dimensions satisfying a transport equation with stretching:
\begin{equation}\label{eqTransWStretching}
\p_t u+(u\cdot\nabla)\omega=(\omega\cdot\nabla)u,\quad u=K\ast\omega
\end{equation}
for a matrix-valued $K$. In this example $\dim_H\omega(0)=1$ but $\dim_H\omega(t)=2$ for $t>0$. 

We are partly motivated by the problem of analysing the evolution of isolated vortex filaments for the three-dimensional Euler equations, which can be written in the form \re{eqTransWStretching} using the Biot--Savart kernel (see \cite{MajdaBertozzi},\cite{JCR_NSE_Book}):
\begin{equation}\label{eqBS3D}
u=-\frac{1}{4\pi}\int_{\RR^3}\frac{(x-y)\x\omega(y)}{|x-y|^3}\ \d y.
\end{equation}

Formal asymptotics suggest that if vorticity has a $\delta$-distribution as the tangent to a curve, then to leading order, each point on the curve evolves in the direction of its binormal (in the Frenet-Serret sense) at a rate proportional to the curvature, in a rescaled asymptotic sense. See \cite{ArmsHama_1965} for an example of such classical arguments.

For recent progress on the vortex filament problem for the three-dimensional Euler equations and the binormal curvature flow itself, see for example \cite{JerrardSeis_2017}, \cite{BanicaVega2015}, \cite{HozVega2018}.

In \cite{PooleyRodrigo2019a} Pooley and Rodrigo derive the asymptotics for a family of models of the Euler equations in which vortex filaments have finite velocity along the binormal to leading order. This leads to the natural problem of determining sufficient conditions for a velocity field to flow filaments to filaments. In Section \ref{secActiveScalars3D} we show that it is not sufficient for the velocity to be divergence free, continuous and $W^{1,p}(\RR^3)$ even when the velocity is generated by the filament in the sense of \re{eqTransWStretching}. 

\medskip
To finish this section we now set up and state our main results. Our results in two dimensions will make use of the following definition of weak measure-valued solutions of the transport equation \re{eqTransportClassical}.

\begin{definition}\label{defWeakAdvectionMeasure2D}
A time-dependent locally finite Borel
 measure $\mu=\mu(t)$ is \emph{weakly advected} by a continuous weakly divergence-free velocity $u\in C([0,T]\x\RR^2;\RR^2)$ if
\begin{multline}\label{eqWeakAdvectionMeasure2D}
\int_0^T\int\p_t\phi(t,x)\,\d\mu(t)(x)\d t- \int \phi(T,x)\,\d\mu(T)(x) \\
+  \int \phi(0,x)\,\d\mu(0)(x)+\int_0^T\int(u\cdot\nabla)\phi\,\d\mu(t)(x)\d t =0,
\end{multline}
for all $\phi\in C^\infty_c([0,T]\x\RR^2)$.
\end{definition}

In Section \ref{secPassiveScalarH} we prove the first main result:
\begin{theorem}\label{thmPassiveScalarH}
For any $h\in(1,2)$ there exists a uniformly continuous divergence-free velocity $u\in C_c([0,\infty)\x\RR^2)$ and a time-dependent measure $\omega$ that is weakly advected by $u$, such that $\supp(\omega(0))=\{(0,0)\}$ but the Hausdorff dimension $\dim_H\supp(\omega(t))=h$ for all $t>0$.

Additionally, $u$ is locally Lipschitz in $\left([0,\infty)\x\RR^2\right)\backslash\{(0,(0,0))\}$.
\end{theorem}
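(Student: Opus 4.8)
The plan is to build $u$ by concatenating rescaled copies of a single divergence‑free ``gadget'' field accumulating at $t=0$, and to build $\omega$ as the push‑forward of a self‑similar measure along the resulting flow. Fix $h\in(1,2)$, choose an integer $M$ large (large enough that $M$ disjoint discs of radius $\lambda:=M^{-1/h}$ fit in the unit disc and that $2\lambda<1$; any $M\ge 4$ works since $h<2$), and choose contracting similarities $\phi_1,\dots,\phi_M$ of $\RR^2$, each of ratio $\lambda$, with $\phi_1(y)=\lambda y$, with $\phi_i(\overline B)$ pairwise disjoint and contained in the open unit disc $B$, and satisfying the open set condition. Let $E=\bigcup_i\phi_i(E)$ be the attractor, so $\dim_H E=h$, and let $\mu$ be the associated self‑similar probability measure, $\mu=\tfrac1M\sum_i(\phi_i)_\#\mu$. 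Since $\phi_1$ is scaling by $\lambda$ about $0$ and $0\in E$, the sets $E_k:=\lambda^k E$ satisfy $E_{k+1}\subset E_k\subset B$, $\bigcap_k E_k=\{0\}$, and $\dim_H E_k=h$ for every $k$; also $(\lambda^k\cdot)_\#\mu$ is the self‑similar measure on $E_k$.

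The heart of the construction is a \emph{gadget}: a divergence‑free, compactly supported, (at least) Lipschitz velocity field $w$ on $(\tfrac1{10},\tfrac9{10})\x B$ whose time‑$1$ flow map $W$ is the identity outside $B$, fixes $0$, satisfies $W(\phi_1(E))=E$, and, crucially, $\big(W\circ\phi_1\big)_\#\mu=\mu$ --- that is, $W$ ``unfolds'' the single sub‑cell $\phi_1(E)$, carrying the self‑similar measure supported there onto the self‑similar measure on all of $E$. \textbf{This step is the main obstacle.} There is no topological obstruction: $E$ and $\phi_1(E)$ are totally disconnected Lebesgue‑null sets, so $B\setminus E$ and $B\setminus\phi_1(E)$ are connected, open, and of equal (full) measure, and an area‑preserving homeomorphism with the stated properties exists (by an Oxtoby--Ulam type measure‑preserving extension theorem, or by an explicit stirring motion if $E$ is chosen with enough symmetry), realised as the time‑$1$ flow of such a $w$ because area‑preserving isotopies are generated by divergence‑free fields. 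The point is that the divergence‑free (area‑preserving) constraint --- the whole source of interest in the theorem --- is compatible with spreading one sub‑cell out to the entire attractor precisely because $E$ has positive codimension; producing $w$ explicitly, with the claimed regularity, is where the real work lies.

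Now concatenate rescaled gadgets. On $I_k:=[2^{-k-1},2^{-k}]$ set $u(t,x):=c_k\,\lambda^k\,w\big(\sigma_k(t),\lambda^{-k}x\big)$, where $\sigma_k$ maps $I_k$ affinely onto $[0,1]$ and $c_k=\sigma_k'\sim 2^{k}$, and set $u\equiv 0$ for $t\ge 1$; then $u\in C_c([0,\infty)\x\RR^2)$ with support in $[0,1]\x B$. Because $w$ vanishes near $s=\tfrac1{10}$ and $s=\tfrac9{10}$, consecutive pieces agree (with $u\equiv0$) in a neighbourhood of each junction $t=2^{-k}$, and any point of $[0,\infty)\x\RR^2$ other than $(0,(0,0))$ --- having either $t$ bounded away from $0$ or $x$ bounded away from $0$ --- has a neighbourhood meeting only finitely many of the strips $I_k\x\lambda^kB$; hence $u$ is locally Lipschitz on $\big([0,\infty)\x\RR^2\big)\setminus\{(0,(0,0))\}$. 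On $I_k$ one has $\|u\|_\infty\lesssim(2\lambda)^k\to0$ and $\|\nabla_x u\|_\infty\lesssim c_k\sim 2^{k}$, but $|I_k|\sim 2^{-k}$, so by Gr\"onwall the flow of $u$ restricted to each strip is bi‑Lipschitz with a constant independent of $k$; combined with $\|u\|_\infty\to0$ and $\lambda^kB\to\{0\}$, this gives uniform continuity of $u$ on all of $[0,\infty)\x\RR^2$ (with $u(0,\cdot)\equiv0$).

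Finally, define $\omega$ by $\omega(t):=\mu$ for $t\ge1$, $\omega(t):=X(t,1,\cdot)_\#\mu$ for $t\in(0,1]$ (where $X$ is the flow of $u$, classical within each strip), and $\omega(0):=\delta_0$. The defining property $(W\circ\phi_1)_\#\mu=\mu$ of the gadget, rescaled and iterated, gives $\omega(2^{-k})=(\lambda^k\cdot)_\#\mu$, the self‑similar measure on $E_k$; hence $\supp\omega(2^{-k})=E_k$ and $\dim_H\supp\omega(2^{-k})=h$, and for intermediate $t\in I_k$ the measure $\omega(t)$ is the image of $\omega(2^{-k})$ under the bi‑Lipschitz strip flow, so $\dim_H\supp\omega(t)=h$ for all $t>0$, while $\supp\omega(0)=\{(0,0)\}$. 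Since $\supp\omega(2^{-k})\subset\lambda^kB$ and every $\omega(t)$ has mass $1$, $\omega(t)\rightharpoonup\delta_0=\omega(0)$ as $t\to0^+$. It remains to verify \eqref{eqWeakAdvectionMeasure2D} for $\phi\in C^\infty_c([0,T]\x\RR^2)$: split the time integral at $t=2^{-N}$; on $[2^{-N},T]$, $u$ is Lipschitz and $\omega$ is the Lipschitz‑flow push‑forward, so $\tfrac{\d}{\d t}\int\psi\,\d\omega(t)=\int(u\cdot\nabla)\psi\,\d\omega(t)$ and integration by parts in $t$ yields \eqref{eqWeakAdvectionMeasure2D} on $[2^{-N},T]$ with endpoint terms at $2^{-N}$ and $T$; letting $N\to\infty$, the $[0,2^{-N}]$ contribution is $O\big(2^{-N}(\|\partial_t\phi\|_\infty+\|\nabla\phi\|_\infty\|u\|_{L^\infty([0,2^{-N}]\x\RR^2)})\big)\to0$ and $\int\phi(2^{-N},\cdot)\,\d\omega(2^{-N})\to\phi(0,(0,0))=\int\phi(0,\cdot)\,\d\omega(0)$ by the weak convergence above, giving the identity in the limit.
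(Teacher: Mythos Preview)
Your overall architecture---concatenate geometrically rescaled copies of a single divergence-free building block so that the singularity accumulates at $(0,(0,0))$, and define $\omega$ as a push-forward along the flow---is exactly the paper's strategy (the paper runs time the other way, contracting an $h$-dimensional set to the origin and then reversing, but this is the same picture). The regularity bookkeeping on the strips $I_k$ and the passage to the weak formulation are fine. The gap you yourself flag is real, however, and your proposed patch does not close it: Oxtoby--Ulam type theorems produce only a \emph{homeomorphism}, with no bi-Lipschitz or $C^1$ control, and the generating velocity $\p_sW_s\circ W_s^{-1}$ of a merely continuous area-preserving isotopy need not be bounded, let alone Lipschitz. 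Without a Lipschitz $w$, every downstream step---uniform continuity of $u$, the bi-Lipschitz strip flows that preserve Hausdorff dimension, even the existence of a well-defined trajectory map---fails.

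The paper's contribution is precisely to build this gadget explicitly. It fixes a particular inhomogeneous self-similar set $S_\alpha$ (a binary tree of rotated line segments) and writes the contracting velocity as $U=\eta(t)\sum_{k\ge0}\sum_{w\in\{1,2\}^k}\alpha^{k}\,\widetilde F_w^t u$, where the basic brick is $u=\nabla^\perp(\chi\,J_\delta\Psi)$ for a piecewise-linear stream function $\Psi$. The geometry of $S_\alpha$ is engineered so that the supports of $\nabla(\widetilde F_w^t u)$ are \emph{pairwise disjoint} across all words $w$ (Lemma~\ref{factDisjointBorders} and Proposition~\ref{factDisjointSuppts}); this is what gives a Lipschitz bound on the infinite sum uniformly in the truncation level, and hence a genuine divergence-free Lipschitz field whose time-$1$ map scales $S_\alpha$ by $\gamma<1$. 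Your abstract $M$-branch IFS does not come with this disjointness for free, and the stronger measure-transport condition $(W\circ\phi_1)_\#\mu=\mu$ you impose (which is more than the dimension statement needs) only makes the gadget harder. To repair your argument you would have to replace the soft Oxtoby--Ulam step with an explicit stream-function construction of $w$, tailored to your chosen IFS, along the lines above.
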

In Subsection \ref{secPassiveScalar2} we also see that if instead $u$ is only locally Lipschitz on $[0,\infty)\x(\RR^2\backslash\{0\})$ we can extend this to the case $h=2$.

\begin{theorem}\label{thmPassiveScalar2}
There exists a uniformly continuous divergence-free velocity $u\in C_c([0,\infty)\x\RR^2)$ and a measure $\omega$ that is advected by $u$ weakly, such that $\supp(\omega(0))=\{(0,0)\}$ but $\dim_H\supp(\omega(t))=2$ for all $t>0$.

Additionally $u$ is locally Lipschitz in $[0,\infty)\x(\RR^2\backslash\{(0,0)\})$.
\end{theorem}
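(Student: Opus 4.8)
The plan is to reuse the scaling/self-similar architecture behind Theorem \ref{thmPassiveScalarH} but push the target dimension up to $h=2$, paying for it by allowing the singular point of $u$ to smear out over all of $\{t=0\}\times\{(0,0)\}$ — more precisely, by only asking for local Lipschitz regularity on $[0,\infty)\times(\RR^2\setminus\{(0,0)\})$ rather than away from the single space-time point $(0,(0,0))$. Concretely, I would work on a sequence of dyadic time windows $I_n=[t_{n+1},t_n]$ with $t_n\downarrow 0$, and on the $n$-th window run a divergence-free ``spreading'' velocity that takes the measure currently supported on a small set near the origin and redistributes it across a grid of roughly $4^k$ sub-cells inside a fixed ball, so that as $n\to\infty$ the accumulated image fills (a Cantor-like set of full dimension in) a fixed square. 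The key is to choose the number of cells per stage and the geometry of the stream functions so that (i) the velocities glue continuously across the $t_n$, (ii) the spatial Lipschitz constants blow up only as $(x,y)\to(0,0)$ at $t=0$ and stay locally bounded elsewhere, and (iii) the whole field is uniformly continuous and compactly supported. The measure $\omega(t)$ is defined as the push-forward of $\delta_0$ (or, working backwards, as the appropriate limit of averaged point masses) under the flow; weak advection in the sense of Definition \ref{defWeakAdvectionMeasure2D} then follows because on each window the flow is genuinely Lipschitz (the singularity is only at the single excluded point at the single time $t=0$, which is $\omega(0)$-null in the relevant sense), so the classical characteristic identity gives \re{eqWeakAdvectionMeasure2D} on each $I_n$, and one sums and passes to the limit.

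The steps, in order, would be: first, fix the target square $Q\subset\RR^2$ and a nested sequence of finite unions of sub-squares $Q\supset \mathcal{Q}_1\supset\mathcal{Q}_2\supset\cdots$ whose intersection $E=\bigcap_k\mathcal{Q}_k$ has $\dim_H E=2$ (e.g.\ at stage $k$ keep all $4^k$ dyadic sub-squares, or thin slowly enough that the dimension stays $2$ — full retention is simplest and already gives $\dim_H E=2$). Second, design on the time window $I_n$ an autonomous-in-shape but time-reparametrised stream function $\psi_n(t,x,y)$ whose Hamiltonian flow maps the $n$-th configuration of occupied cells onto the $(n+1)$-st; this is where one imports the elementary ``shear and relabel'' building block (a pair of orthogonal shears, as in Aizenman's and the authors' earlier constructions) and rescales it in space and time. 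Third, verify the matching conditions $u(t_n^+,\cdot)=u(t_n^-,\cdot)$ and $u\to 0$ as $t\to 0$, and check the Lipschitz bound: on $I_n$ the spatial gradient of $u$ is of order (stage count)/(window length), which one arranges to be uniformly bounded on any region bounded away from $(0,0)$ because those regions are only touched by finitely many windows, while near $(0,0)$ one simply accepts the blow-up. Fourth, identify $\omega(t)$ and prove weak advection by summing the windowed characteristic identities against a test function $\phi$, using dominated convergence and the compact support of both $u$ and all $\omega(t)$. Fifth, compute $\dim_H\supp\omega(t)$: for $t>0$ only finitely many spreading stages have acted, but the \emph{remaining} stages (which act on $(0,t)$'s worth of future time, or rather have already acted in the construction run from $t=0$) must be organised so that $\supp\omega(t)$ already equals the full limit set $E$; this forces the schedule to be ``front-loaded'' so that, as in Theorem \ref{thmPassiveScalarH}, the dimension jumps to its final value instantaneously.

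The main obstacle I anticipate is reconciling two competing demands at $t=0$: to get the \emph{full} dimension $h=2$ one must spread the mass over a number of cells that grows without bound as $t\downarrow 0$, and doing this with a \emph{continuous} (indeed uniformly continuous) and compactly supported velocity forces the time windows to shrink and the stream-function amplitudes to be tuned very precisely so that $u$ does not develop a jump or an unbounded oscillation at $t=0$. In the $h<2$ case of Theorem \ref{thmPassiveScalarH} there is slack: the number of cells per stage can be bounded, so $u$ can be Lipschitz away from the single point $(0,(0,0))$; at $h=2$ that slack is gone, which is exactly why the theorem only claims Lipschitz regularity away from the spatial point $(0,0)$ for \emph{all} times — the construction must allow $\nabla u$ to be unbounded along the whole ray $\{(t,(0,0)):t\ge 0\}$, and the delicate part is showing this is the \emph{only} place regularity is lost while still keeping $u$ globally uniformly continuous. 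I would expect the bulk of the work to be the quantitative estimate that, off any neighbourhood of $(0,0)$, only finitely many windows contribute and their contributions have uniformly bounded Lipschitz norm, together with the uniform-continuity modulus at $(0,0)$ coming from the fact that the spatial amplitude of the motion on $I_n$ tends to $0$ even though its gradient does not.
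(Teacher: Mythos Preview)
Your scheme contains a concrete obstruction that you do not address. You propose to let $E$ be the full square (``full retention is simplest and already gives $\dim_H E=2$'') and then build a divergence-free velocity whose flow collapses $E$ to the origin. But a continuous weakly divergence-free velocity generates a measure-preserving flow (wherever the trajectory map is defined), so the image of a set of positive Lebesgue measure can never have zero measure; in particular you cannot flow a square to a point. The ``thin slowly'' alternative does yield a measure-zero set of dimension $2$, but you do not explain how to build the divergence-free stage maps for such a set, and your suggested building block (a pair of orthogonal Aizenman-type shears) moves cells around bijectively rather than effecting any genuine branching --- on each window $I_n$ the flow is Lipschitz, hence a bi-Lipschitz homeomorphism, so it \emph{cannot} ``redistribute one cell across $4^k$ sub-cells''. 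The dimension jump can only occur at the singular time, so the right picture is not ``spreading a point outward'' but ``collapsing a pre-existing $2$-dimensional set inward'' followed by time reversal; your step five acknowledges this tension without resolving it, and your definition of $\omega(t)$ as the push-forward of $\delta_0$ is not well-posed precisely because the trajectory from the origin is non-unique.

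The paper's route is quite different from what you sketch. Rather than attacking a single dimension-$2$ target set, it reuses the $h<2$ machinery as a black box: one takes a sequence $\alpha_n\to 1/\sqrt{2}$ so that $\dim_H S_{\alpha_n}\to 2$, places affine copies $G_n(S_{\alpha_n})$ along the positive $x_1$-axis accumulating at the origin, and sets $S=([0,24]\times\{0\})\cup\bigcup_n G_n(S_{\alpha_n})$, which has $\dim_H S=2$ by countable stability but Lebesgue measure zero. The velocity is then assembled from (i) the already-constructed $U_{\alpha_n}$'s acting independently on each copy to contract it, and (ii) an auxiliary translation field $\nu$ that slides all the copies toward the origin. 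The Lipschitz constants of the $U_{\alpha_n}$ blow up as $n\to\infty$, and since the copies accumulate spatially at the origin, this is exactly why the singularity sits on the whole ray $[0,\infty)\times\{(0,0)\}$ rather than at a single space-time point. Your intuition about the \emph{location} of the singularity is correct, but the mechanism producing it in the paper --- infinitely many independent pieces of increasing complexity placed in space --- is not the dyadic time-window cell-subdivision you propose.
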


In Section \ref{secActiveScalars2D} we begin to consider active scalar systems. That is, we add the requirement that the velocity $u$ be recovered from the measure $\omega$ by a convolution:

\begin{theorem}\label{thmActiveScalar2D}
There exists a kernel $K\in C(\RR^2;\RR^2)$ and $T>0$ such that there exists a time-dependent measure $\omega$ that is weakly advected by 
\begin{equation}\label{eqUdefn_abstractOmega2D}
u(t,x)\coloneqq \int_{\RR^2} K(x-y)\,\d\omega(t)(y),
\end{equation}
and $\dim_H\supp\omega(0)=0$, $\dim_H\supp\omega(T)=1$.
\end{theorem}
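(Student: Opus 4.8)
The plan is to reverse-engineer the kernel $K$ from an explicitly prescribed pair $(\omega,u)$, borrowing the geometric mechanism already established in the passive case. First I would fix a time-dependent measure $\omega(t)$ that is a rescaled version of the passive construction: $\omega(0)=\delta_0$ and, for $t>0$, $\omega(t)$ is (roughly) one-dimensional Hausdorff measure restricted to a growing self-similar Cantor-like fan or segment $S_t\subset\RR^2$ emanating from the origin, with $\dim_H S_t=1$. The key point is that the velocity that produces this advection — call it $u(t,x)$, obtained from Theorem \ref{thmPassiveScalarH} (or a simplified variant of it with $h=1$) — is uniformly continuous, compactly supported, divergence-free, and vanishes outside a bounded region. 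I would then seek $K\in C(\RR^2;\RR^2)$ with
\begin{equation*}
u(t,x)=\int_{\RR^2}K(x-y)\,\d\omega(t)(y)
\end{equation*}
for all $t\in[0,T]$ simultaneously.

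The central obstacle — and the heart of the construction — is that a single fixed kernel $K$ must reproduce $u(t,\cdot)$ against the whole one-parameter family $\omega(t)$. This forces a careful choice of which one-dimensional set $S_t$ to advect. The natural device is \emph{self-similarity in space-time}: arrange that $S_t$ is, up to the parabolic (or linear) rescaling $x\mapsto t^{\alpha}x$, a single fixed set $S$, and that $\omega(t)$ is the corresponding rescaled Hausdorff measure; then the convolution relation at time $t$ collapses, after change of variables, to a \emph{single} identity $u_*(\,\cdot\,)=\int K(\,\cdot-y)\,\d(\mathcal{H}^1\!\restriction\! S)(y)$ for a model profile $u_*$, together with a scaling-compatibility constraint on $K$ (e.g.\ $K$ positively homogeneous of the appropriate degree, or asymptotically so near $0$ and cut off far away). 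Concretely I would first choose a convenient $S$ — a finite union of segments, or a Cantor fan — design the model velocity profile $u_*$ directly so that it is divergence-free, continuous, compactly supported and genuinely advects $\mathcal H^1\!\restriction\!S$ onto its dilate, and only then define $K$ by deconvolving $u_*$ against $\mathcal H^1\!\restriction\!S$. Because $\mathcal H^1\!\restriction\!S$ is a fixed compactly supported measure and $u_*$ is smooth enough away from the origin, the deconvolution can be solved on the Fourier side or, more robustly, by an explicit ansatz in which $u_*$ is built \emph{as} such a convolution from the start (i.e.\ one writes $u_*=K*\mathcal H^1\!\restriction\!S$ with $K$ as the unknown and reads off $K$ from a layer-potential-type formula).

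The remaining steps are then bookkeeping. (i) Verify $K\in C(\RR^2;\RR^2)$: continuity of the single convolution $K*\mathcal H^1\!\restriction\!S$ being equal to the continuous $u_*$ is what pins down the allowed singularity of $K$ at the origin; one checks the chosen $S$ (positive dimension, but thin enough) makes $K$ at worst mildly singular and still locally integrable against $\mathcal H^1\!\restriction\!S$, hence $u(t,\cdot)$ continuous for every $t$. (ii) Verify $\dv u(t,\cdot)=0$ weakly for all $t$: this follows from the divergence-free property of the model profile $u_*$ together with the scaling. (iii) Verify the weak advection identity of Definition \ref{defWeakAdvectionMeasure2D} for $(\omega,u)$: this is inherited from the passive-scalar analysis, since by construction $u$ is exactly the velocity field transporting $\omega(t)$; one repeats (or cites) the distributional computation already carried out for Theorem \ref{thmPassiveScalarH}, testing against $\phi\in C^\infty_c([0,T]\x\RR^2)$ and using that trajectories of the origin spread out to fill $S_t$. (iv) Finally, record the dimensions: $\supp\omega(0)=\{0\}$ so $\dim_H\supp\omega(0)=0$, while $\supp\omega(T)=S_T$ with $\dim_H S_T=1$ by the self-similar structure.

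I expect the genuinely delicate point to be matching the \emph{time-independence} of $K$ with the \emph{time-dependence} of $\omega(t)$: a priori $K*\omega(t)$ at two different times need not be rescalings of one another unless the geometry of $S_t$ and the homogeneity of $K$ are tuned to each other, and also unless the \emph{speed} at which the construction spreads mass is compatible with the degree of homogeneity forced on $K$ by requiring $u$ to be bounded and compactly supported (a global cut-off of $K$ must not destroy the convolution identity on the support of $\omega(t)$, which is fine because $\supp\omega(t)$ stays in a fixed ball). Handling this is exactly where the freedom in choosing $T$ small, choosing the self-similar exponent, and choosing $S$ to be a sufficiently sparse one-dimensional set all get used; once those parameters are fixed consistently, the rest of the argument is a routine repetition of the passive-scalar estimates from Section \ref{secPassiveScalarH}.
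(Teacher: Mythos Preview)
Your proposal is a genuinely different route from the paper's, and it has a real gap at the deconvolution/self-similarity step.

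The paper does not reverse-engineer $K$ from a prescribed evolution at all. It fixes an explicit kernel from the outset, $K=\nabla^\perp\kappa$ with $\kappa(x)=x_1x_2|x|^{-1/2}\chi(|x|)$, and then \emph{solves} the active scalar system this kernel generates on the segment $I=[-1,1]\times\{0\}$. Because $K(x_1,0)=(-x_1|x_1|^{-1/2},0)$ for $|x_1|<2$, the coupled system restricted to $I$ reduces to a one-dimensional integral equation
\[
\p_tY(t,\alpha)=-\int_{-1}^1\frac{Y(t,\alpha)-Y(t,\beta)}{|Y(t,\alpha)-Y(t,\beta)|^{1/2}}\,\d\beta,\qquad Y(0,\alpha)=\alpha,
\]
whose solution is obtained by mollifying, getting uniform Lipschitz bounds, and passing to the limit; one then checks $\p_tY(t,1)\le-\sqrt{Y(t,1)}$, so the segment collapses to the origin by $t=2$. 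Lemmas~\ref{lemPassToWeak2D} and~\ref{lemReversibility} turn this into the weak time-reversed solution. Notice that this evolution is \emph{not} self-similar: $Y(t,\cdot)$ is not a dilate of $Y(0,\cdot)$, and no scaling ansatz is invoked.

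Your plan instead prescribes the evolution $\omega(t)$ first (self-similar, borrowed from the passive construction) and then asks for a single $K$ with $K\ast\omega(t)=u(t,\cdot)$ for every $t$. The difficulty you flag in the last paragraph is exactly the gap. For the convolution identity to collapse under rescaling to one equation, $K$ must be homogeneous of some degree; but once $K$ is fixed, the velocity it produces on $S_t$ is determined by the \emph{density} of $\omega(t)$, which in turn is determined by the flow of that velocity. You are not free to prescribe both the sets $S_t$ and the density on them --- the active coupling fixes one from the other --- so there is no reason the $u$ generated by $K\ast\omega(t)$ will coincide with the $u_*$ you ``designed directly'' to advect $\mathcal{H}^1\!\lefthalfcup S$ onto its dilate. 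The Fourier-side alternative, dividing $\widehat{u_*}$ by $\widehat{\mathcal{H}^1\!\lefthalfcup S}$, is generically ill-posed (the latter vanishes on large sets for any segment or Cantor set), and nothing in your outline shows a continuous $K$ results. Your ``explicit ansatz'' option, building $u_*$ as $K\ast\mathcal{H}^1\!\lefthalfcup S$ with $K$ chosen first, is precisely the paper's approach with the causality inverted: once $K$ is chosen, the evolution is no longer yours to prescribe, and one must analyse the resulting nonlinear system --- which is what the paper actually does.
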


Finally,  in Section \ref{secActiveScalars3D}, we extend the construction in Section \ref{secActiveScalars2D} to the three-dimensional vector-valued case with stretching. That is, we consider weak solutions $\omega$ of the system
 \[
 \p_t u+(u\cdot\nabla)\omega=(\omega\cdot\nabla)u,
 \]  
 for given $u$. In this case we use the following definition of weak measure-valued solutions. 
 \begin{definition}\label{defWeakAdvectionMeasure}
A time-dependent vector-valued locally finite Borel measure $\omega$ with locally finite distributional divergence is weakly advected by a continuous velocity $u:[0,T]\x\RR^3\to\RR^3$ if
\begin{multline}\label{eqWeakAdvectionMeasure}
\int_0^T\int\p_t\phi(t,x)\cdot\,\d\omega(t)(x)\d t- \int \phi(T,x)\cdot\,\d\omega(T)(x) \\
+  \int \phi(0,x)\cdot\,\d\omega(0)(x)+\int_0^T\int((u\cdot\nabla)\phi)\big|_{(t,x)}\cdot\,\d\omega(t)(x)\d t\\
-\int_0^T\int ((\nabla\phi)^\top u)\big|_{(t,x)}\cdot\,\d\omega(t)(x)\d t - \int_0^T\langle\phi\cdot u,\nabla\cdot\omega(t)\rangle\d t =0,
\end{multline}
for all $\phi\in C^\infty_c([0,T]\x\RR^3;\RR^3)$.
\end{definition}

In this context we prove the following.

\begin{theorem}\label{thmTimeReversal}
There exists a kernel $K\in C(\RR^3;\RR^{3\x3})$, $T>0$, and a time-dependent and locally finite measure $\omega$ with locally finite distributional divergence that is weakly advected by 
\begin{equation}\label{eqUdefn_abstractOmega}
u(t,x)\coloneqq \int K(x-y)\,\d\omega(t)(y),
\end{equation}
such that $\dim_H\supp\omega(0)=1$, $\dim_H\supp\omega(T)=2$, and $u$ is weakly divergence free.
\end{theorem}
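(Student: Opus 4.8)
The plan is to produce $\omega$ as a \emph{suspension} of the two-dimensional active scalar solution furnished by Theorem~\ref{thmActiveScalar2D}. Write $(K_\ast,\omega_\ast,T)$ for the data of that theorem: $\omega_\ast=\omega_\ast(t)$ is a measure on $\RR^2$ with $\omega_\ast(0)=\delta_0$ and $\dim_H\supp\omega_\ast(T)=1$, weakly advected in the sense of Definition~\ref{defWeakAdvectionMeasure2D} by the continuous, compactly supported, weakly divergence-free field $u_\ast(t,x')=\int_{\RR^2}K_\ast(x'-y')\,\d\omega_\ast(t)(y')$. Fix a bump $\chi\in C^\infty_c(\RR)$ with $\supp\chi\subseteq[0,1]$ and $\int\chi=1$, and a cutoff $\ell\in C^\infty_c(\RR)$ with $\ell\equiv1$ on $[-1,1]$, and set $\eta:=\ell\ast\chi\in C^\infty_c(\RR)$; then $\eta\equiv1$ on $\supp\chi$, since for $x_3,s\in\supp\chi\subseteq[0,1]$ one has $x_3-s\in[-1,1]$ and hence $\eta(x_3)=\int\ell(x_3-s)\chi(s)\,\d s=\int\chi=1$. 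Writing points of $\RR^3$ as $(x',x_3)$, set
\[
\omega(t):=\omega_\ast(t)\otimes\big(\chi(x_3)\,\d x_3\big)\,e_3,\qquad u(t,x):=\big(\eta(x_3)\,u_\ast(t,x'),\,0\big).
\]
Then $u$ is continuous, compactly supported, and weakly divergence free (indeed $\nabla\cdot u=\eta(x_3)\,\nabla_{x'}\cdot u_\ast=0$), $\omega(t)$ is a vector measure carried by $\supp\omega_\ast(t)\x\supp\chi$ and everywhere parallel to $e_3$, and $\nabla\cdot\omega(t)=\omega_\ast(t)\otimes\big(\chi'(x_3)\,\d x_3\big)$ is a locally finite measure. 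Finally take the matrix kernel $K\in C(\RR^3;\RR^{3\x3})$ whose only nonzero entries are $K_{i3}(x',x_3):=K_{\ast,i}(x')\,\ell(x_3)$ for $i=1,2$; since the only nonzero component of $\omega(t)$ is $\omega^3(t)=\omega_\ast(t)\otimes\chi(x_3)\,\d x_3$, the $i$-th component of $\int K(x-y)\,\d\omega(t)(y)$ equals $\int_{\RR^3}K_{i3}(x-y)\,\d\omega^3(t)(y)=\eta(x_3)\,u_{\ast,i}(t,x')$, because $\ell\ast\chi=\eta$; hence $u=K\ast\omega$, as demanded by \re{eqUdefn_abstractOmega}.

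The crux is to verify that $\omega$ is weakly advected by $u$ in the sense of Definition~\ref{defWeakAdvectionMeasure}. On $\supp\omega$ and on $\supp\nabla\cdot\omega$ (both contained in $\RR^2\x\supp\chi$, where $\eta\equiv1$) we have $u=(u_\ast,0)$, which is $x_3$-independent there, and $\omega$ points along $e_3$; hence the stretching term $(\omega\cdot\nabla)u$ is inert, and what must be shown is that the two remaining stretching-type terms in \re{eqWeakAdvectionMeasure} cancel. Using $u_3\equiv0$ and $\nabla\cdot\omega(t)=\omega_\ast(t)\otimes\chi'\,\d x_3$, the term $-\int_0^T\langle\phi\cdot u,\nabla\cdot\omega(t)\rangle\,\d t$ equals $-\int_0^T\!\int_{\RR^2}\!\int_\RR\big(\phi_1 u_{\ast,1}+\phi_2 u_{\ast,2}\big)(t,y',x_3)\,\chi'(x_3)\,\d x_3\,\d\omega_\ast(t)(y')\,\d t$; in the term $-\int_0^T\!\int((\nabla\phi)^\top u)\cdot\d\omega\,\d t$ the integrand pairs $\big(\partial_3\phi_1\,u_{\ast,1}+\partial_3\phi_2\,u_{\ast,2}\big)$ against $\omega_\ast\otimes\chi\,\d x_3$, and integrating by parts in $x_3$ (no boundary contribution, as $\phi$ is compactly supported) replaces $\chi\,\partial_3\phi_j$ by $-\chi'\phi_j$, producing exactly the negative of the first. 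What is then left of \re{eqWeakAdvectionMeasure}, after Fubini in $x_3$, is the identity \re{eqWeakAdvectionMeasure2D} for $\omega_\ast$, $u_\ast$, and the slice test function $\phi_3(\cdot,\cdot,x_3)\in C^\infty_c([0,T]\x\RR^2)$, integrated against $\chi(x_3)\,\d x_3$; this vanishes for each fixed $x_3$ by Theorem~\ref{thmActiveScalar2D}, so \re{eqWeakAdvectionMeasure} holds.

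It remains to count dimensions: $\supp\omega(0)=\{0\}\x\supp\chi$ is a segment, of Hausdorff dimension $1$, while $\supp\omega(T)=\supp\omega_\ast(T)\x\supp\chi$, and $\dim_H(A\x I)=\dim_H A+1$ for any $A\subseteq\RR^2$ and any interval $I$ (the factor $I$ having equal Hausdorff and packing dimension), so $\dim_H\supp\omega(T)=\dim_H\supp\omega_\ast(T)+1=2$. The $L^q(0,T;W^{1,p})$ bounds ensuring $u$ admits a unique regular Lagrangian flow pass from $u_\ast$ to $u$ via Proposition~\ref{propSlitURegularity}. Conceptually the difficulty has been front-loaded into Theorem~\ref{thmActiveScalar2D}; the two points here to treat with care are (i) arranging $\chi,\ell$ (hence $\eta$) and the $x_3$-profile of $K$ so that the matrix convolution $K\ast\omega$ reproduces the suspended, compactly supported, divergence-free velocity \emph{exactly}, and (ii) tracking the nonzero distributional divergence of $\omega$ (supported on the two caps $\supp\omega_\ast(t)\x\{\text{ends of }\supp\chi\}$ of the evolving sheet), which is precisely what forces the extra terms into Definition~\ref{defWeakAdvectionMeasure} and whose contributions must cancel as above. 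Since \re{eqTransWStretching}---equivalently the weak formulation of Definition~\ref{defWeakAdvectionMeasure}---is invariant under $(t,u,K)\mapsto(T-t,-u,-K)$, running the same construction backwards exhibits a continuous, weakly divergence-free active velocity along which a two-dimensional vortex sheet \emph{collapses} onto a one-dimensional filament; this is the time-reversed phenomenon after which the theorem is named, and the one bearing directly on vortex filaments for the three-dimensional Euler equations.
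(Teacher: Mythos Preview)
Your argument is correct, but it follows a genuinely different path from the paper's. The paper does not reduce to Theorem~\ref{thmActiveScalar2D}; instead it builds an independent three-dimensional Lagrangian construction (Theorem~\ref{thmRibbon}): starting from $\omega_0=(0,1,0)\,\mathcal H^2_\Xi$ on the square $\Xi=\{0\}\times[-1,1]^2$, a matrix kernel derived from $\kappa(x)=x_1x_3|(x_1,x_3)|^{-1/2}\chi(|x|)$ produces a flow $X(t,\alpha)=(0,\alpha_2,Y(t,\alpha_3))$ that collapses $\Xi$ onto a segment. The paper then proves a dedicated passage-to-weak lemma (Lemma~\ref{lemPassToWeak}) computing $\nabla\cdot\omega$ via \re{eqAS3D_Lag} and mollifying $X$ in Lagrangian variables, and finally time-reverses. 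Your route---tensor the \emph{weak} two-dimensional solution $\omega_\ast$ with a smooth $x_3$-profile and direct the measure along $e_3$---is more modular: the $3$D weak identity \re{eqWeakAdvectionMeasure} collapses to the $2$D one \re{eqWeakAdvectionMeasure2D} for the slice $\phi_3(\cdot,\cdot,x_3)$, once you observe that the two stretching-type terms cancel by integration by parts in $x_3$; this entirely sidesteps the $3$D Lagrangian-to-weak argument and the explicit time reversal (the latter being already absorbed into Theorem~\ref{thmActiveScalar2D}). What the paper's approach buys is an explicit $3$D Lagrangian description---$\omega(t)$ is literally a push-forward under a flow $X$, with $\p_{\alpha_2}X\equiv(0,1,0)$---which is closer to the vortex-filament picture and makes the subsequent regularity statement (Proposition~\ref{propRibbonURegularity}) immediate from the geometry of $X(t,\Xi)$. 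Two small points in your write-up to tighten: choose $\chi$ so that $\supp\chi$ is an interval (e.g.\ $\supp\chi=[0,1]$), so that the product-dimension identity you invoke applies verbatim; and note that $\omega_\ast(0)$ is only a point mass at the origin (total mass $\mathcal H^1(I)=2$), not literally $\delta_0$, though this does not affect the argument.
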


\section{Flows with fractal structure}\label{secPassiveScalarH}
In this section we prove Theorem \ref{thmPassiveScalarH} and Theorem \ref{thmPassiveScalar2}. In each case we begin by constructing a velocity that advects a set of specified dimension into the origin in finite time. From the corresponding trajectory maps (defined below) restricted to the set of interest, a measure can be constructed that is weakly advected by $u$, in the sense of Definition \ref{defWeakAdvectionMeasure2D}. After a time-reversal argument, these weak measure-valued solutions yield examples where $\dim_H\supp \omega(t)$ has the required jump at the initial time.

\begin{definition}\label{defTrajectoryMap}
For $d\in\NN$, given $U\in C([0,T]\x\RR^d;\RR^d)$ and $A\subset\RR^d$, we say that $X_U:[0,T]\x A\to\RR^d$ is a \emph{trajectory map} for  $U$ (on $A$) if $t\mapsto X_U(t,a)$ is differentiable for any $a\in A$ at all $t\in[0,T]$ and
\begin{equation}\label{eqDefnOfLagFlow}
\p_tX_U(t,a)=U(t,X_U(t,a)),\quad X_U(0,a)=a.
\end{equation}
It should be assumed that  $A=\RR^d$ unless specified.
\end{definition}

For any fixed $\alpha \in (1/2,1/\sqrt{2})$ we construct a particular inhomogeneous self-similar set $S_\alpha$ with $\dim_HS_\alpha=-\log 2/\log\alpha\in(1,2)$. The construction is such that we can exhibit an explicit example of a divergence free, time-dependent velocity $W$ with $X_W(t,S_\alpha)=\{0\}$ for all sufficiently large $t$.

The following lemma allows us to pass to the weak formulation from constructions based on trajectory maps. The proof is not difficult and is omitted. A similar, but more involved proof is presented in Lemma \ref{lemPassToWeak}.
\begin{lemma}\label{lemPassToWeak2D}
Let $\mu_0$ be a finite Borel measure with 
\[\supp\mu_0=A\subset\RR^2\] and let $u\in C([0,T]\x\RR^2;\RR^2)$ admit a trajectory map $X_u$ on $A$, for some $T>0$. Then the measure $\mu=\mu(t)$, defined by the push-forward of $\mu_0$ under $X_u$,
\[
\mu(t)\coloneqq X_u(t)\#\mu_0
\]
 is weakly advected by $u$ on the time interval $[0,T]$.
\end{lemma}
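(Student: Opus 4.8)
The statement to prove is Lemma~\ref{lemPassToWeak2D}: given a finite Borel measure $\mu_0$ supported on $A\subset\RR^2$ and a continuous velocity $u$ admitting a trajectory map $X_u$ on $A$, the push-forward $\mu(t)=X_u(t)\#\mu_0$ is weakly advected by $u$ in the sense of Definition~\ref{defWeakAdvectionMeasure2D}.

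\textbf{The plan.} The whole point is that weak advection~\eqref{eqWeakAdvectionMeasure2D} is exactly what one obtains by testing the ODE~\eqref{eqDefnOfLagFlow} against $\phi$ and integrating in both time and the measure $\mu_0$. So the first step is to rewrite each term of~\eqref{eqWeakAdvectionMeasure2D} using the change-of-variables / push-forward identity $\int f\,\d\mu(t) = \int f(X_u(t,a))\,\d\mu_0(a)$, valid for bounded continuous (or Borel) $f$ since $X_u(t,\cdot)$ is continuous on $A=\supp\mu_0$ and $\mu_0$ is finite. This turns the left-hand side of~\eqref{eqWeakAdvectionMeasure2D} into
\[
\int_0^T\!\!\int_A \p_t\phi(t,X_u(t,a))\,\d\mu_0(a)\,\d t
-\int_A \phi(T,X_u(T,a))\,\d\mu_0(a)
+\int_A \phi(0,a)\,\d\mu_0(a)
+\int_0^T\!\!\int_A (u\cdot\nabla)\phi\big|_{(t,X_u(t,a))}\,\d\mu_0(a)\,\d t.
\]

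\textbf{Key steps.} Second, fix $a\in A$ and consider the scalar function $g_a(t):=\phi(t,X_u(t,a))$. Since $\phi\in C^\infty_c$ and $t\mapsto X_u(t,a)$ is differentiable on $[0,T]$ with derivative $u(t,X_u(t,a))$, the chain rule gives $g_a'(t)=\p_t\phi(t,X_u(t,a))+\nabla\phi(t,X_u(t,a))\cdot u(t,X_u(t,a))$, which is continuous in $t$; hence the fundamental theorem of calculus yields $g_a(T)-g_a(0)=\int_0^T g_a'(t)\,\d t$, i.e.
\[
\phi(T,X_u(T,a))-\phi(0,a)=\int_0^T\!\Big(\p_t\phi+(u\cdot\nabla)\phi\Big)\big|_{(t,X_u(t,a))}\,\d t.
\]
Third, integrate this identity over $A$ against $\mu_0$; by Fubini (justified because everything is bounded — $\phi$ and its derivatives are compactly supported, $u$ is continuous hence bounded on the relevant compact set, and $\mu_0$ is finite — and jointly measurable in $(t,a)$ by continuity of the integrands in $x$ and measurability of $a\mapsto X_u(t,a)$) one may swap the $t$- and $a$-integrals, and the resulting expression is precisely the rearranged left-hand side above, which therefore vanishes. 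Since $\phi\in C^\infty_c([0,T]\x\RR^2)$ was arbitrary, this is exactly~\eqref{eqWeakAdvectionMeasure2D}.

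\textbf{Main obstacle.} None of the analytic steps are deep; the only genuine point requiring a little care is the measurability and Fubini justification — specifically that $(t,a)\mapsto X_u(t,a)$ is jointly measurable so that the integrands are measurable on $[0,T]\x A$, and that the push-forward change-of-variables formula is legitimate for the Borel (not just continuous) integrands appearing after differentiation. Joint measurability follows from separate continuity in $t$ (given) together with continuity in $a$ on $A$ (or, more weakly, from $X_u$ being a Carathéodory map), and the finiteness of $\mu_0$ plus compact support of $\phi$ make all the interchanges of integration routine. This is why the authors remark that the proof is not difficult and defer the details; the more substantial version of this argument, where the velocity also has a distributional divergence contributing extra terms, is carried out in Lemma~\ref{lemPassToWeak}.
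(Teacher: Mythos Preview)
Your proof is correct and is precisely the natural direct argument. The paper omits the proof of this lemma, remarking only that it is not difficult and that a more involved analogue is carried out in Lemma~\ref{lemPassToWeak}; your chain-rule plus Fubini argument is exactly the routine verification the authors have in mind, and indeed is simpler than the mollification route used in Lemma~\ref{lemPassToWeak} (which is needed there only because of the extra $\p_{\alpha_2}X$ factor coming from stretching).

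One small caveat worth flagging: you invoke continuity of $X_u(t,\cdot)$ in $a$, but Definition~\ref{defTrajectoryMap} only asserts differentiability in $t$ for each fixed $a$, not any regularity in $a$. For the push-forward $\mu(t)$ to be a well-defined Borel measure and for Fubini to apply, one needs at least Borel measurability of $a\mapsto X_u(t,a)$ (and joint measurability of $(t,a)\mapsto X_u(t,a)$). In every application in the paper $X_u$ is in fact continuous on $[0,T]\times A$, so this is harmless; but strictly speaking it is an implicit hypothesis rather than a consequence of the stated definition, and your proof would be cleaner if you simply assumed measurability of $X_u$ rather than asserting continuity in $a$.
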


In the weak formulation the solutions of the transport equation are time reversible, as detailed in the following lemma. The proof follows directly from Definition \ref{defWeakAdvectionMeasure2D} and is omitted.
\begin{lemma}\label{lemReversibility}
If a family of locally finite Borel measures $\mu(t)$ is weakly advected by $u\in C([0,T]\x\RR^2;\RR^2)$ then $\tilde\mu(t)\coloneqq\mu(T-t)$ is weakly advected by $\tilde u(t)\coloneqq -u(T-t)$ on $[0,T]$.
\end{lemma}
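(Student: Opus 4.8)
The plan is to deduce the statement directly from Definition \ref{defWeakAdvectionMeasure2D} by reversing time in the test function. Fix an arbitrary $\psi\in C^\infty_c([0,T]\x\RR^2)$ and set $\phi(t,x)\coloneqq\psi(T-t,x)$; this is again an admissible test function, and the assignment $\psi\mapsto\phi$ is a bijection of $C^\infty_c([0,T]\x\RR^2)$ onto itself. First I would record the elementary identities $\p_t\phi(t,x)=-(\p_t\psi)(T-t,x)$, $\nabla\phi(t,x)=(\nabla\psi)(T-t,x)$, $\phi(0,x)=\psi(T,x)$, and $\phi(T,x)=\psi(0,x)$, together with $\tilde\mu(0)=\mu(T)$ and $\tilde\mu(T)=\mu(0)$.

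Next I would write out the weak-advection identity \re{eqWeakAdvectionMeasure2D} for $\mu$, $u$, and this particular $\phi$, and in each of the two time integrals perform the substitution $s=T-t$. Using $\mu(T-s)=\tilde\mu(s)$ and $u(T-s,x)=-\tilde u(s,x)$, the four terms become $-\int_0^T\!\int(\p_t\psi)\,\d\tilde\mu(s)\,\d s$, then $-\int\psi(0,x)\,\d\tilde\mu(0)(x)$, then $+\int\psi(T,x)\,\d\tilde\mu(T)(x)$, and finally $-\int_0^T\!\int(\tilde u\cdot\nabla)\psi\,\d\tilde\mu(s)\,\d s$. Multiplying the resulting identity through by $-1$ and reordering the terms yields exactly \re{eqWeakAdvectionMeasure2D} for $\tilde\mu$, $\tilde u$, and $\psi$. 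Since $\psi$ was arbitrary, this shows $\tilde\mu$ is weakly advected by $\tilde u$ on $[0,T]$.

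Finally I would note the trivial regularity and structural checks: $\tilde u\in C([0,T]\x\RR^2;\RR^2)$, being the composition of $u$ with the reflection $t\mapsto T-t$ followed by negation; and $\tilde u(t,\cdot)=-u(T-t,\cdot)$ is weakly divergence-free for every $t$, since weak divergence-freeness is a purely spatial condition preserved under negation and time relabelling, so $\tilde u$ meets the hypotheses required of a velocity in Definition \ref{defWeakAdvectionMeasure2D}. There is no genuine obstacle here; the only point requiring care is the bookkeeping of signs — the minus sign from $\p_t$ hitting $T-t$, the (non-)sign from the measure-preserving substitution $s=T-t$, and the minus sign in $\tilde u=-u(T-\cdot)$ must be tracked so that, after multiplying by $-1$, every term lands in the correct place.
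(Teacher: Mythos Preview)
Your argument is correct and is exactly the direct verification from Definition \ref{defWeakAdvectionMeasure2D} that the paper alludes to; the paper itself omits the proof, simply noting that it follows directly from that definition. Your bookkeeping of the three sign changes (from $\p_t\phi$, from $\tilde u=-u(T-\cdot)$, and the overall multiplication by $-1$) is accurate.
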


Applying Lemmas \ref{lemPassToWeak2D} and \ref{lemReversibility} to the $h$-dimensional Hausdorff measure restricted to $S_\alpha$:
\[
\mathcal{H}^h_{S_\alpha}\coloneqq \mathcal{H}^h\lefthalfcup S_\alpha,
\] one can check that Theorem \ref{thmPassiveScalarH} is a consequence of the following:

\begin{theorem}\label{thmFractal1}
For $h\in(1,2)$ there exists a uniformly continuous, weakly divergence-free, velocity $W:[0,1]\x\RR^2\to\RR^2$ and a compact set $S\subset\RR^2$ with Hausdorff dimension $\dim_H(S)=h$ such that $W$ admits a unique  Lagrangian flow $X_W$, which satisfies:
\[
\dim_HX_W(t,S)=h\quad \forall t\in[0,1),\qquad X_W(1,S)=\{0\}.
\]
\end{theorem}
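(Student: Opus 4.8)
The plan is to take $\alpha=2^{-1/h}\in(1/2,1/\sqrt2)$, and to build $W$ as an infinite concatenation, on time intervals $[t_n,t_{n+1}]$ with $t_n\uparrow1$, of rescaled copies of one divergence-free ``renormalisation'' velocity, the $n$-th copy acting inside a ball $B_n$ of radius $\bigO{\alpha^n}$ about the origin, arranged so that at the discrete times $t_n$ the flow has carried $S$ onto the scaled copy $f_1^n(S)$, which shrinks to $\{0\}$. First I would fix two contracting similarities $f_1,f_2$ of ratio $\alpha$ with $f_1(x)=\alpha x$ (so $0$ is the fixed point of $f_1$), together with a condensation set $C$ equal to a line segment, placed so that $C$ and the two images lie in essentially disjoint regions of a ball $B$ and the open set condition holds for $\{f_1,f_2\}$. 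Let $S=S_\alpha$ be the inhomogeneous self-similar set $S_\alpha=C\cup f_1(S_\alpha)\cup f_2(S_\alpha)$, so $S_\alpha$ is the attractor of $\{f_1,f_2\}$ together with the countable family of segments $\{f_w(C)\}$; the standard dimension formula for inhomogeneous self-similar sets then gives $\dim_H S_\alpha=\max\{1,-\log2/\log\alpha\}=h$. One also has $S_\alpha\subset B$, $S_\alpha\supset f_1(S_\alpha)\supset f_1^2(S_\alpha)\supset\cdots$, and $\bigcap_n f_1^n(S_\alpha)=\{0\}$. The segments $f_w(C)$ are included precisely so that the flow in the next step can be written down on an explicit family of arcs.

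\textbf{The renormalisation block.} The core of the argument is to construct a single velocity $V\in C^1_c([0,1]\x\RR^2;\RR^2)$ that is divergence-free, supported in space in $B$ and in time in a compact subinterval of $(0,1)$, and whose time-$1$ flow $\Phi\coloneqq X_V(1,\cdot)$ equals the identity outside $B$ and satisfies
\[
\Phi|_{S_\alpha}=f_1|_{S_\alpha}.
\]
Since $V$ is divergence-free, $\Phi$ is automatically area-preserving; this is compatible with $\Phi|_{S_\alpha}=f_1$ (which is not area-preserving) only because $S_\alpha$ and $f_1(S_\alpha)$ are Lebesgue-null, and it is exactly here that the particular arrangement of $f_1,f_2,C$ is needed. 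I would obtain $V$ by exhibiting an explicit area-preserving isotopy $(\Psi_s)_{s\in[0,1]}$ from $\Psi_0=\id$ to $\Psi_1=\Phi$, assembled by composing elementary divergence-free flows --- compactly supported rotations $(-\p_2\psi,\p_1\psi)$ and shears --- that slide the tree of segments of $S_\alpha$ inward by one generation, the moves at geometrically finer scales being summed to a convergent product, and then taking $V$ to be the generating velocity $\p_s\Psi_s\circ\Psi_s^{-1}$, reparametrised in time so as to vanish near the endpoints. Producing such an explicit $V$ is the main obstacle; everything else is scaling and bookkeeping.

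\textbf{Concatenation and collapse.} Choose $t_0=0$ and $t_{n+1}-t_n=(1-r)r^n$ for a fixed $r\in(\alpha,1)$, so that $t_n\uparrow1$. On $[t_n,t_{n+1}]$ let $W$ be the space-time rescaling of $V$ with spatial factor $\alpha^n$ and with the time variable linearly mapping $[t_n,t_{n+1}]$ onto the time-support of $V$; then $W$ is divergence-free on that slab and supported in $B_n\coloneqq\alpha^nB$, its flow over $[t_n,t_{n+1}]$ is $x\mapsto\alpha^n\Phi(\alpha^{-n}x)$, and one computes amplitude $\|W\|_{L^\infty([t_n,t_{n+1}]\x\RR^2)}=\bigO{(\alpha/r)^n}$ and Lipschitz constant $\bigO{r^{-n}}$. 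Since $\alpha^n\Phi(\alpha^{-n}\cdot)$ restricts to $f_1$ on $\alpha^nS_\alpha=f_1^n(S_\alpha)$ and is the identity outside $B_n$, an induction on $n$ gives $X_W(t_n,S_\alpha)=f_1^n(S_\alpha)$; hence for $t\in(t_n,t_{n+1})$ the set $X_W(t,S_\alpha)$ is a bi-Lipschitz image of $f_1^n(S_\alpha)=\alpha^nS_\alpha$, so $\dim_H X_W(t,S_\alpha)=\dim_H S_\alpha=h$ for every $t\in[0,1)$. Finally, for $b\in S_\alpha$ and $t\in[t_n,t_{n+1}]$ one has $X_W(t,b)\in B_n$, whence $X_W(t,b)\to0$ as $t\to1$; since $\|W(s,\cdot)\|_{L^\infty}\to0$ as $s\to1$, the trajectories extend in a $C^1$ fashion to $t=1$ with $\p_tX_W(1,\cdot)=0=W(1,\cdot)$, and $X_W(1,S_\alpha)=\{0\}$.

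\textbf{Regularity and uniqueness.} The field $W$ is weakly divergence-free and compactly supported in $[0,1]\x B$; it is continuous with $W(1,\cdot)\equiv0$ (each block vanishes near its temporal endpoints), hence uniformly continuous on the compact set $[0,1]\x\overline{B}$. Near any point of $([0,1]\x\RR^2)\setminus\{(1,(0,0))\}$ only finitely many blocks are active, so $W$ is locally Lipschitz there; it fails to be Lipschitz at $(1,(0,0))$, where the block Lipschitz constants $\bigO{r^{-n}}$ blow up, but its amplitude vanishes there. On each slab $[0,t_N]\x\RR^2$ the field $W$ is Lipschitz, so trajectories are unique there; letting $N\to\infty$ and appending the $C^1$ extension to $t=1$ yields a unique Lagrangian flow $X_W$ on $[0,1]$ --- equivalently, $W\in L^1(0,1;W^{1,1}(\RR^2))$ is divergence-free, so uniqueness of the regular Lagrangian flow follows from the Di Perna--Lions theory. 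Combined with the previous step, this gives all the assertions of the theorem, the one substantial point being the explicit construction of the renormalisation block $V$.
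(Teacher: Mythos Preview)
Your overall architecture is the same as the paper's: fix $\alpha=2^{-1/h}$, build an inhomogeneous self-similar set $S_\alpha$ of dimension $h$, construct a divergence-free ``one-step'' velocity whose time-$1$ flow contracts $S_\alpha$ by a fixed ratio, then concatenate space--time rescaled copies on intervals $[t_n,t_{n+1}]$ with $t_n\uparrow 1$. The bookkeeping for uniform continuity, local Lipschitz away from the blow-up point, and $X_W(1,S_\alpha)=\{0\}$ is also handled the same way.

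The genuine gap is the construction of your renormalisation block $V$. You correctly identify this as ``the main obstacle'' and then defer it to a one-sentence sketch (compose compactly supported Hamiltonian rotations and shears ``sliding the tree of segments inward by one generation''). But this is exactly where essentially all of the work in the paper lies, and your sketch does not obviously succeed: you demand the pointwise identity $\Phi|_{S_\alpha}=f_1$, i.e.\ $\Phi(x)=\alpha x$ for every $x\in S_\alpha$, which means every segment $f_w(C)$ must be carried to $f_{1w}(C)$ simultaneously by an area-preserving isotopy. Because the segments are nested at all scales, a move at one generation will disturb deeper generations, and it is not clear how your ``convergent product'' of elementary flows organises these interactions so that the limit is Lipschitz (or even continuous) rather than merely a formal composition. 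You also impose more than is needed: only the set equality $\Phi(S_\alpha)=\alpha S_\alpha$ is required for the induction, not the pointwise statement.

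The paper resolves this step quite differently. It chooses the similarities $F_1,F_2$ to be rotations by $\pi/2$ composed with scaling by $\alpha$ and symmetric translations (so neither fixes the origin), and then builds the one-step velocity directly as a \emph{sum}, not a composition:
\[
U(t,x)=\eta(t)\sum_{k\ge 0}\sum_{w\in\{1,2\}^k}\alpha^k\,\widetilde F_w^t u(x),
\]
where $u=\nabla^\perp(\chi J_\delta\Psi)$ is a single explicit smooth local field. The geometry (specifically the $\pi/2$ rotation and the choice of mollification radius $\delta$) forces the supports of $\nabla\widetilde F_w^t u$ to be pairwise disjoint, so the sum is uniformly Lipschitz with an explicit constant. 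One then checks, via an inductive calculation of $U$ on each segment $F_w^t(I)$, that $X_U(1,S_\alpha)=\gamma S_\alpha$ for $\gamma=1-\delta$ (a set equality, not pointwise). This ``additive'' construction sidesteps the interaction problem entirely and is what makes the argument go through; your proposal would need either to carry out something equivalent or to give a genuine proof that the isotopy product converges in $C^1$, which at present it does not.
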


\subsection{Notation and a family of maps}
We begin the proof of Theorem \ref{thmFractal1} by setting out some notation. For fixed $h\in(1,2)$ let $\alpha=\alpha_h\coloneqq 2^{-1/h}$, denote a scale factor used in constructing the self-similar set $S_\alpha$. These choices are illustrated in Figure \ref{figRFR}.

\begin{notation}\label{notConstants}
Given $\alpha\in (1/2,1/\sqrt{2})$, fix the following constants for the construction of the flow:
\[
\varepsilon=\varepsilon_\alpha \coloneqq \frac{1}{4\alpha}-\frac{1}{2\sqrt{2}}<\frac{1}{4},
\]
\[
\delta=\delta_\alpha\coloneqq \frac{1}{4}-\frac{\alpha}{2\sqrt{2}}=\alpha\varepsilon_\alpha,
\]
are mollification/cutoff radii, and 
\[
\gamma=\gamma_\alpha\coloneqq 1-\delta_\alpha= \frac{3}{4}+\frac{\alpha}{2\sqrt{2}}
\]
is a contraction ratio, associated to the flow.
\end{notation}
\begin{notation}
The set of finite binary words is denoted by \[
\{1,2\}^\ast\coloneqq\bigcup_{k\in\NN_0}\{1,2\}^k,
\]
where $\NN_0=\NN\cup\{0\}$ and by convention we take $\{1,2\}^0=\{\emptyset\}$.
\end{notation}
\begin{notation}
Let $\eta\in C^\infty_c(\RR)$ be non-negative and compactly supported on $(0,1)$, such that $\int_0^1\eta =\delta$, for $\delta=\delta_\alpha$ as above.
We define the following families of affine bijections on $\RR^2$:
\[
F_1^t(x)\coloneqq \left(1-\int_0^t\eta,0\right)+\alpha \mathcal{R}(x),\quad F_2^t(x)\coloneqq\left(-1+\int_0^t\eta,0\right)+\alpha \mathcal{R}(x)
\]
 for $t\in\RR$, where $\mathcal{R}$ denotes a (counter clockwise) rotation by $\pi/2$ about the origin, i.e.\ $\mathcal{R}x=x^\perp$.
 Furthermore, for $k\in\NN$ and $w\in \{1,2\}^k$ define
 \[F^t_w\coloneqq F_{w_1}^t\circ\ldots\circ F_{w_k}^t.\]
 We also take $F^t_\emptyset=\id$.
\end{notation}
We will always assume that $\|\eta\|_{L^\infty}\lesssim\delta$. That is, $\|\eta\|_{L^\infty}\leq C\delta$ for some universal constant $C>0$.

\begin{notation}
For $\xi\geq0$ we denote by $R_\xi$ the closed rectangle
\[
R_\xi\coloneqq[-2-\xi,2+\xi]\x[-\sqrt{2}-\xi,\sqrt{2}+\xi].
\]
\end{notation}
\subsection{The self-similar set \texorpdfstring{$S_\alpha$}{Sɑ}}
Denote by $I$ the line segment $I=[-1,1]\x\{0\}$. For $\alpha\in(1/2,1/\sqrt{2})$ consider the affine linear maps
\[
F_1(x)=F_1^0(x)=(1,0)+\alpha \mathcal{R}(x),\quad F_2(x)=F_2^0(x)=(-1,0)+\alpha \mathcal{R}(x),
\] (see Figure \ref{figRFR}). 
\begin{figure}[b]
\centering
\includegraphics[width=10cm] {\FiguresPath 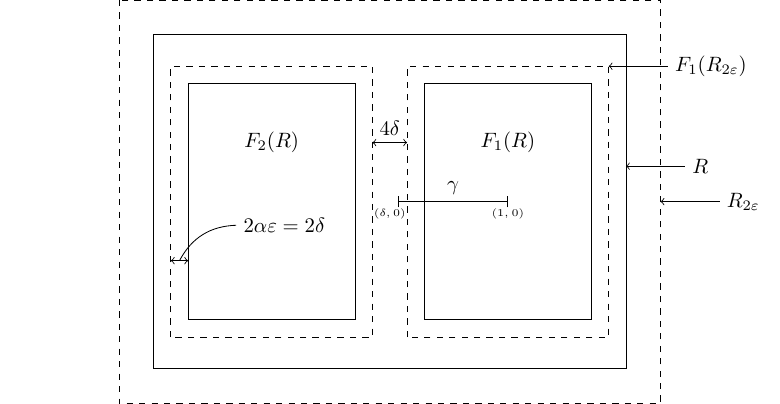}
\caption{Geometric significance of $\delta$, $\varepsilon$ and $\gamma$.}\label{figRFR}
\end{figure}
Now let $S_\alpha$ be the compact attractor of an inhomogeneous iterated function system: 
\begin{equation}\label{eqSAlphaDefn}
S_\alpha=I\cup \bigcup_{i=1,2}F_i(S_\alpha).
\end{equation}
It is straightforward to check that $S_\alpha$ is given by
\begin{equation}\label{eqFracCharacter}
S_\alpha =\overline{\bigcup_{w\in\{1,2\}^\ast}F_w(I)}.
\end{equation}
See \cite{BFM_2019} and references therein for further discussion of inhomogeneous iterated function systems and the calculation of their attractors.

Denote by $\sigma_1$ and $\sigma_2$ the reflections in the $x$ and $y$ axes respectively. The proof of the following lemma is elementary.
\begin{lemma}The attractor $S_\alpha$ is invariant with respect to the reflections $\sigma_1$, and $\sigma_2$. 
\end{lemma}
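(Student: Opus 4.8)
The plan is to work from the explicit characterization \eqref{eqFracCharacter}, namely $S_\alpha=\overline{\bigcup_{w\in\{1,2\}^\ast}F_w(I)}$, together with two conjugation identities for the generating maps. Since $\sigma_1$ and $\sigma_2$ are homeomorphisms of $\RR^2$ they commute with the closure operation, so it is enough to show that each $\sigma_j$ permutes the countable family of ``cylinder segments'' $\{F_w(I):w\in\{1,2\}^\ast\}$ among themselves. The base case is immediate: $I=[-1,1]\x\{0\}$ is symmetric, so $\sigma_1(I)=\sigma_2(I)=I$.

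The algebraic heart is the following. Writing $\bar1=2$, $\bar2=1$, I claim that
\[
\sigma_1\circ F_i=F_i\circ\sigma_2,\qquad \sigma_2\circ F_i=F_{\bar i}\circ\sigma_1,\qquad i\in\{1,2\}.
\]
Each of these follows by a one-line computation from $F_i(x)=b_i+\alpha\mathcal{R}x$ (with $b_1=(1,0)=-b_2$), the relation $\sigma_j\mathcal{R}\sigma_j=-\mathcal{R}=\mathcal{R}^{-1}$ (any linear reflection inverts the quarter-turn $\mathcal{R}$), and the facts $\sigma_1b_i=b_i$, $\sigma_2b_i=b_{\bar i}$. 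Given $w=w_1\cdots w_k\in\{1,2\}^k$, one then pushes $\sigma_1$ (respectively $\sigma_2$) from the left through the composition $F_{w_1}\circ\cdots\circ F_{w_k}$ using these identities: at each step the reflection crosses one map, alternating between $\sigma_1$ and $\sigma_2$, and flipping the digit precisely when a $\sigma_2$ crosses an $F_i$. It emerges on the right as $\sigma_1$ or $\sigma_2$ applied to $I$, hence as $I$ itself, so that $\sigma_1(F_w(I))=F_{w'}(I)$ and $\sigma_2(F_w(I))=F_{w''}(I)$, where $w'$ is obtained from $w$ by flipping its even-indexed digits and $w''$ by flipping its odd-indexed digits. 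Both $w\mapsto w'$ and $w\mapsto w''$ are involutions of $\{1,2\}^k$, in particular bijections; equivalently, one may organise this as a joint induction on $|w|$ using only the two identities and the base case, which is probably cleanest to write. It follows that $\sigma_j\bigl(\bigcup_w F_w(I)\bigr)=\bigcup_w F_w(I)$ for $j=1,2$, and taking closures in \eqref{eqFracCharacter} yields $\sigma_j(S_\alpha)=S_\alpha$.

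There is no genuine obstacle here, consistent with the authors' remark that the proof is elementary; the only point requiring a little care is the bookkeeping of which digits are flipped when the reflection is transported through $F_w$, and the observation that the induced correspondence on words is a bijection — both of which become routine once the two conjugation identities are in hand. One could instead try to verify directly that $\sigma_j(S_\alpha)$ satisfies the fixed-point equation \eqref{eqSAlphaDefn}, but the individual maps $F_1,F_2$ are not symmetric under $\sigma_1$ or $\sigma_2$ (only under the point reflection $-\id$), so that route still requires essentially the same bookkeeping, and the cylinder-segment argument above is the most economical.
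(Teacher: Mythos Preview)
Your proof is correct. The two conjugation identities $\sigma_1\circ F_i=F_i\circ\sigma_2$ and $\sigma_2\circ F_i=F_{\bar i}\circ\sigma_1$ check out directly (since $\sigma_j\mathcal{R}=\mathcal{R}\sigma_{3-j}$, $\sigma_1 b_i=b_i$, $\sigma_2 b_i=b_{\bar i}$), and the bookkeeping on which digits get flipped is accurate; together with $\sigma_j(I)=I$ and the characterisation \eqref{eqFracCharacter}, this gives the claimed invariance. The paper itself omits the proof entirely, noting only that it is elementary, so there is no authors' argument to compare against; your cylinder-segment approach is a natural realisation of that remark.
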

\begin{figure}
\centering
\begin{subfigure}{0.45\textwidth}
\centering
\includegraphics{\FiguresPath 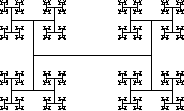}
\caption{$\alpha=0.6$}
\end{subfigure}
\begin{subfigure}{0.45\textwidth}
\centering
\includegraphics{\FiguresPath 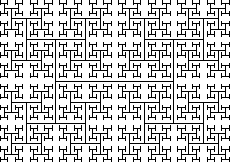}
\caption{$\alpha=0.707$}
\end{subfigure}
\caption{$S_\alpha$}
\end{figure}
\subsubsection{The dimension of \texorpdfstring{$S_\alpha$}{Sɑ}}
We next calculate the Hausdorff dimension of $S_\alpha$.
\begin{notation}
Let $H_\pm$ denote the closed half-spaces
\[
H_\pm\coloneqq\{x\,\colon \pm x_1\geq 0\}.
\]
For $\xi\in\RR$ let
\[
H_\pm^{\xi}\coloneqq\{x\,\colon \pm x_1\geq \xi\}.
\]
\end{notation}

\begin{lemma}$S_\alpha\subset \left(I \cup F_1(R_0)\cup F_2(R_0)\right)\subset R_0$.
\end{lemma}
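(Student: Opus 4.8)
The claim to prove is the inclusion $S_\alpha\subset \bigl(I\cup F_1(R_0)\cup F_2(R_0)\bigr)\subset R_0$, where $R_0=[-2,2]\x[-\sqrt2,\sqrt2]$, $I=[-1,1]\x\{0\}$, and $F_i(x)=(\pm1,0)+\alpha\mathcal R(x)$ with $\alpha\in(1/2,1/\sqrt2)$.

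\medskip

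The plan is as follows. First I would establish the \emph{outer} inclusion $I\cup F_1(R_0)\cup F_2(R_0)\subset R_0$ directly, by a routine coordinate computation: since $\mathcal R$ maps $R_0=[-2,2]\x[-\sqrt2,\sqrt2]$ to $[-\sqrt2,\sqrt2]\x[-2,2]$, the set $\alpha\mathcal R(R_0)$ is the axis-aligned rectangle $[-\alpha\sqrt2,\alpha\sqrt2]\x[-2\alpha,2\alpha]$, and translating by $(\pm1,0)$ gives $F_i(R_0)=[\pm1-\alpha\sqrt2,\pm1+\alpha\sqrt2]\x[-2\alpha,2\alpha]$. One then checks the two constraints $1+\alpha\sqrt2\le 2$ and $2\alpha\le\sqrt2$; both reduce to $\alpha\le 1/\sqrt2$, which holds by hypothesis. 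Since $I\subset R_0$ trivially, the outer inclusion follows.

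\medskip

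Next I would prove the \emph{inner} inclusion $S_\alpha\subset I\cup F_1(R_0)\cup F_2(R_0)=:T$. Using the characterization \eqref{eqFracCharacter}, it suffices (since $T$ is closed, being a finite union of closed sets) to show $F_w(I)\subset T$ for every finite binary word $w$. I would argue by induction on the length $|w|=k$. For $k=0$, $F_\emptyset(I)=I\subset T$. For $k\ge1$, write $w=iw'$ with $i\in\{1,2\}$ and $|w'|=k-1$; then $F_w(I)=F_i(F_{w'}(I))$. By the induction hypothesis $F_{w'}(I)\subset T\subset R_0$ (using the outer inclusion just proved), so $F_w(I)\subset F_i(R_0)\subset T$. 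This completes the induction, hence $\bigcup_{w}F_w(I)\subset T$, and taking closures gives $S_\alpha\subset T$. Combined with the outer inclusion, we obtain $S_\alpha\subset T\subset R_0$.

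\medskip

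There is essentially no serious obstacle here; the only point requiring a moment's care is verifying that the two scalar inequalities defining the outer inclusion are exactly equivalent to the standing assumption $\alpha<1/\sqrt2$ (and the lower bound $\alpha>1/2$ is not even needed for this particular lemma). One should also note the self-consistency check that \eqref{eqSAlphaDefn}, namely $S_\alpha=I\cup F_1(S_\alpha)\cup F_2(S_\alpha)$, together with the inclusion $S_\alpha\subset R_0$, immediately re-derives $S_\alpha\subset I\cup F_1(R_0)\cup F_2(R_0)$ in one line — so in fact the inductive argument can be replaced by simply invoking \eqref{eqSAlphaDefn} once the outer inclusion and $S_\alpha\subset R_0$ are known; but establishing $S_\alpha\subset R_0$ in the first place is cleanest via the induction above. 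Either route is elementary, consistent with the paper's remark that the proof is omitted.
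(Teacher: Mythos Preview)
Your proof is correct and follows essentially the same route as the paper: both compute $F_i(R_0)=[\pm1-\alpha\sqrt2,\pm1+\alpha\sqrt2]\times[-2\alpha,2\alpha]$ explicitly, check the inclusion in $R_0$ using $\alpha<1/\sqrt2$, and then deduce $S_\alpha\subset R_0$ from \eqref{eqFracCharacter} via an induction on word length (the paper phrases this as ``$F_w(R_0)\subset R_0$ for all non-empty $w$''). One small correction to your closing remark: the paper does \emph{not} omit this proof --- it gives exactly the argument just described, and in addition records the identity $1-\sqrt2\,\alpha=4\delta$, which is what feeds into the subsequent Corollary on $F_i(R_0)\subset H_\pm^{4\delta}$.
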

\begin{proof}
By \re{eqFracCharacter} and since $I\subset R_0$, it suffices to check that $F_i(R_0)\subset R_0$ for $i=1,2$. Indeed, from this it will follow that $F_w(R_0)\subset R_0$ for all non-empty finite words $w\in \{1,2\}^\ast$.

Now \begin{multline*}
F_1(R_0)=[1-\sqrt{2}\alpha ,1+\sqrt{2}\alpha]\x [-2\alpha,2\alpha]\\
=[4\delta ,1+\sqrt{2}\alpha]\x [-2\alpha,2\alpha]\subset R_0
\end{multline*}
since $\alpha<1/\sqrt{2}$. The fact that $F_2(R_0)\subset R_0$ follows by a similar argument, or alternatively by symmetry.
\end{proof}
The above calculation shows the following, in fact.
\begin{corollary}\label{corHalfSpaceS}
\begin{equation}\label{corHalfSpaceS:eq1}F_1(S_\alpha)\subset F_1(R_0)\subset H_+^{4\delta}
\end{equation} and
\begin{equation}\label{corHalfSpaceS:eq2}F_2(S_\alpha)\subset F_2(R_0)\subset H_-^{4\delta}.
\end{equation}
\end{corollary}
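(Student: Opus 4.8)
This corollary is little more than a bookkeeping consequence of the preceding lemma and its proof, so the plan is short. First I would note that the lemma already gives $S_\alpha\subset R_0$; applying the maps $F_1$ and $F_2$ (which are in particular set maps on $\RR^2$) yields $F_i(S_\alpha)\subset F_i(R_0)$ for $i=1,2$, which is the first inclusion in each of \re{corHalfSpaceS:eq1} and \re{corHalfSpaceS:eq2}. It then remains only to verify the second inclusion in each line, that is, $F_1(R_0)\subset H_+^{4\delta}$ and $F_2(R_0)\subset H_-^{4\delta}$.

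For $F_1$ I would reuse the explicit formula already computed in the proof of the lemma,
\[
F_1(R_0)=[1-\sqrt{2}\alpha,\,1+\sqrt{2}\alpha]\times[-2\alpha,2\alpha]=[4\delta,\,1+\sqrt{2}\alpha]\times[-2\alpha,2\alpha],
\]
where $1-\sqrt{2}\alpha=4\delta$ is simply the definition of $\delta=\delta_\alpha$ unwound. Since every point of this rectangle has first coordinate at least $4\delta$, it lies in $H_+^{4\delta}=\{x\colon x_1\geq 4\delta\}$, which gives \re{corHalfSpaceS:eq1}. For $F_2$ one argues identically after the companion computation $F_2(R_0)=[-1-\sqrt{2}\alpha,\,-4\delta]\times[-2\alpha,2\alpha]$, whose points all have first coordinate at most $-4\delta$; alternatively, using that $R_0$ is invariant under the reflection $\sigma_2$ in the $y$-axis, one has $F_2(R_0)=\sigma_2(F_1(R_0))\subset\sigma_2(H_+^{4\delta})=H_-^{4\delta}$. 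There is essentially no obstacle here; the only point worth flagging is that $\delta>0$ (equivalently $\alpha<1/\sqrt{2}$), which is part of the standing assumption on $\alpha$ and is exactly what makes the half-spaces $H_\pm^{4\delta}$ strictly smaller than $H_\pm$, so that the inclusions genuinely separate the two ``branches'' $F_1(S_\alpha)$ and $F_2(S_\alpha)$.
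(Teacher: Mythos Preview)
Your proposal is correct and follows exactly the paper's approach: the paper simply states ``The above calculation shows the following, in fact,'' referring to the explicit computation $F_1(R_0)=[4\delta,1+\sqrt{2}\alpha]\times[-2\alpha,2\alpha]$ in the preceding lemma's proof and the symmetry argument for $F_2$, which is precisely what you do.
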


As $S_\alpha$ is an inhomogeneous self-similar set, we have  (see \cite{BFM_2019}) that 
\[\dim_HS_\alpha = \max\left(\dim_H \bigcup_{w\in \{1,2\}^\ast}F_w(I),\dim_H \widetilde{S}_\alpha\right) = \max(1,\dim_H \widetilde{S}_\alpha),\] where we use the countable stability of the Hausdorff dimension to estimate the dimension of the first term. Here $\widetilde{S}_\alpha$ is the attractor of the homogeneous  system:
\[
\widetilde{S}_\alpha=F_1(\widetilde{S}_\alpha)\cup F_2(\widetilde{S}_\alpha).
\]
By Corollary \ref{corHalfSpaceS}, $\widetilde{S}_\alpha$ satisfies a strong separation condition, so the Hausdorff dimension agrees with the similarity dimension. Thus
\begin{equation}\label{eqDim}
\dim_H\widetilde{S}_\alpha = -\log(2)/\log(\alpha).
\end{equation}
It follows that $\dim_H S_\alpha=\dim_H\widetilde{S}_\alpha=h\in (1,2)$ since \[\alpha=2^{-1/h}\in(1/2,1/\sqrt{2}).\]

\subsubsection{Properties of \texorpdfstring{$F_w(R_\varepsilon)$}{Fw(Rε)}}
We now consider the images of $R_\varepsilon$ under $F_w$ for $w\in \{1,2\}^\ast$, the simple facts here will clarify the velocity constructions in the next section.

Calculations similar to those used to prove Corollary \ref{corHalfSpaceS} yield:
\begin{lemma}\label{factHalfSpace2} For $t\in\RR$,
\begin{equation}\label{factHalfSpace2:eq1}F_1^t(R_0)\subset F_1^t(R_\varepsilon)\subset \INT\left(R_0\cap H_+^{\delta}\right)
\end{equation} and
\begin{equation}\label{factHalfSpace2:eq2}F_2^t(R_0)\subset F_2^t(R_\varepsilon)\subset \INT\left(R_0\cap H_-^{\delta}\right).
\end{equation}
\end{lemma}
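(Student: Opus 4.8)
The plan is to reduce everything to an explicit computation of the rectangle $F_i^t(R_\varepsilon)$, exactly as in the proof of the preceding lemma, and then check the two required inclusions by comparing endpoints. I would start with $i=1$; the case $i=2$ then follows by the reflection symmetry $\sigma_1$ (or by an identical direct calculation). Recall that $F_1^t(x) = (1-\int_0^t\eta,\,0) + \alpha\mathcal{R}(x)$, where $\mathcal{R}$ is rotation by $\pi/2$, so $F_1^t$ maps the axis-aligned rectangle $R_\varepsilon = [-2-\varepsilon,2+\varepsilon]\x[-\sqrt2-\varepsilon,\sqrt2+\varepsilon]$ to an axis-aligned rectangle whose half-width in $x_1$ is $\alpha(\sqrt2+\varepsilon)$ and whose half-width in $x_2$ is $\alpha(2+\varepsilon)$, centred at $(1-\int_0^t\eta,0)$. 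Since $\eta\geq 0$ and $\int_0^1\eta=\delta$, the centre's first coordinate lies in $[1-\delta,1]=[\gamma,1]$ for every $t$; this is the only place $t$ enters, and it enters only through a translation confined to that interval.

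The first inclusion $F_1^t(R_0)\subset F_1^t(R_\varepsilon)$ is immediate since $R_0\subset R_\varepsilon$ and $F_1^t$ is a bijection. For the second inclusion I would verify the two coordinate constraints separately. In the $x_1$-direction, the image rectangle is contained in $[\,\gamma-\alpha(\sqrt2+\varepsilon),\,1+\alpha(\sqrt2+\varepsilon)\,]$; I must show the left endpoint is $>\delta$ and the right endpoint is $<2$. Using $\varepsilon_\alpha = \tfrac1{4\alpha}-\tfrac1{2\sqrt2}$ one gets $\alpha\varepsilon = \tfrac14-\tfrac{\alpha}{2\sqrt2}=\delta$ and $\alpha(\sqrt2+\varepsilon) = \alpha\sqrt2+\delta = \tfrac{\alpha}{\sqrt2}\cdot 2 + \delta$; combined with $\gamma = \tfrac34+\tfrac{\alpha}{2\sqrt2}$ these reduce the two inequalities to elementary bounds on $\alpha\in(1/2,1/\sqrt2)$ (for instance the left endpoint equals $\gamma-\alpha\sqrt2-\delta = \tfrac12 - \tfrac{\alpha}{2\sqrt2} - \tfrac{\alpha}{\sqrt2}+\tfrac14$, which one checks is $>\delta$; similarly the right endpoint $1+\alpha\sqrt2+\delta<2$ rearranges to $\alpha\sqrt2 < 1-\delta=\gamma$, true since $\alpha<1/\sqrt2$ gives $\alpha\sqrt2<1$ and in fact $<\gamma$). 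In the $x_2$-direction the image is $[-\alpha(2+\varepsilon),\alpha(2+\varepsilon)]$, and I must show $\alpha(2+\varepsilon)<\sqrt2$; since $\alpha\varepsilon=\delta$ this is $2\alpha+\delta<\sqrt2$, again an elementary inequality in $\alpha$. Taking these strict inequalities together gives containment in the \emph{interior} of $R_0\cap H_+^\delta$, which is what is claimed.

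The case $i=2$ is obtained by applying $\sigma_1$: one has $F_2^t = \sigma_1\circ F_1^t\circ\sigma_1$ (reflection in the $x_1=0$ axis conjugates the two maps, since it negates the translation's first coordinate and commutes appropriately with $\alpha\mathcal R$ up to this sign), and $\sigma_1$ maps $R_0$ to $R_0$, $R_\varepsilon$ to $R_\varepsilon$, and $H_+^\delta$ to $H_-^\delta$; alternatively one simply repeats the endpoint computation. There is no real obstacle here — the content is entirely in the arithmetic identities $\alpha\varepsilon=\delta$, $\delta+\gamma=1$, and the defining formulas in Notation \ref{notConstants} — so the only thing to be careful about is bookkeeping the strictness of the inequalities so as to land in the open set $\INT(R_0\cap H_\pm^\delta)$ rather than merely its closure, and confirming that the $t$-dependence genuinely drops out after bounding $\int_0^t\eta\in[0,\delta]$.
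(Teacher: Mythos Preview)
Your approach is correct and is exactly what the paper has in mind (it omits the proof, saying only that ``calculations similar to those used to prove Corollary~\ref{corHalfSpaceS}'' suffice). Two small slips worth fixing: your displayed value for the left endpoint $\gamma-\alpha\sqrt2-\delta$ is miscopied --- it simplifies to $\tfrac12-\tfrac{\alpha}{\sqrt2}=2\delta$, which is indeed $>\delta$ --- and the conjugation $F_2^t=\sigma_1\circ F_1^t\circ\sigma_1$ is false for either reflection (the correct relation is $F_2^t(x)=-F_1^t(-x)$, conjugation by $-\mathrm{Id}=\sigma_1\sigma_2$, which preserves $R_\varepsilon$, $R_0$ and swaps $H_\pm^\delta$); since you also offer the direct endpoint computation for $i=2$, neither slip is a genuine gap.
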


As a consequence we have the following result about the disjointness of the images of $\overline{R_\varepsilon\backslash \left(R_0\cap \left[H_+^\delta\cup H_-^\delta\right]\right)}$, which will contain the supports of the the gradients of the velocities we construct later.
\begin{lemma}\label{factDisjointBorders}
For any $w,w'\in \{1,2\}^\ast$ with $w\neq w'$ and for any $t\in \RR$
\[
F_w^t\left(\overline{R_\varepsilon\backslash \left(R_0\cap \left[H_+^{\delta}\cup H_-^{\delta}\right]\right)}\right)\cap F_{w'}^t\left(\overline{R_\varepsilon\backslash \left(R_0\cap \left[H_+^{\delta}\cup H_-^{\delta}\right]\right)}\right)=\emptyset
\]
\end{lemma}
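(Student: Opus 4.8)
The goal is to show that the sets $F_w^t\bigl(\overline{R_\varepsilon\backslash(R_0\cap[H_+^\delta\cup H_-^\delta])}\bigr)$ are pairwise disjoint over distinct words $w$ of possibly different lengths, uniformly in $t$. Write $B\coloneqq\overline{R_\varepsilon\backslash(R_0\cap[H_+^\delta\cup H_-^\delta])}$ for this ``border'' region; geometrically $B$ is the part of the slightly enlarged rectangle $R_\varepsilon$ that lies in the central vertical strip $|x_1|<\delta$ (together with the thin $\varepsilon$-collar of $R_0$). The first step is to record the two containments that do the real work, both immediate from Lemma~\ref{factHalfSpace2}: since $B\subset R_\varepsilon$ we have $F_i^t(B)\subset F_i^t(R_\varepsilon)\subset\INT(R_0\cap H_\pm^\delta)$ for $i=1,2$ respectively; and since $B$ is disjoint from the open strip $\INT(R_0\cap[H_+^\delta\cup H_-^\delta])$ by its very definition, $B$ itself is disjoint from each $F_i^t(B)$.

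\textbf{Main induction.} I would argue by induction on $\min(|w|,|w'|)$. If one of the words, say $w$, is empty, then $F_w^t(B)=B$ while $w'=(i)w''$ begins with some letter $i\in\{1,2\}$, so $F_{w'}^t(B)\subset F_i^t(R_\varepsilon)\subset\INT(R_0\cap H_\pm^\delta)$, which is disjoint from $B$ by the second containment above; done. Now suppose both words are non-empty, $w=(i)v$ and $w'=(j)v'$. If the first letters differ, say $i=1$, $j=2$, then $F_w^t(B)\subset F_1^t(R_\varepsilon)\subset R_0\cap H_+^\delta$ and $F_{w'}^t(B)\subset F_2^t(R_\varepsilon)\subset R_0\cap H_-^\delta$, and these two half-space slabs are disjoint (their interiors are $x_1>\delta$ and $x_1<-\delta$), so we are done. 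If the first letters agree, $i=j$, then $F_w^t(B)=F_i^t(F_v^t(B))$ and $F_{w'}^t(B)=F_i^t(F_{v'}^t(B))$; since $F_i^t$ is a bijection of $\RR^2$, it suffices to show $F_v^t(B)\cap F_{v'}^t(B)=\emptyset$, and since $v\neq v'$ and $\min(|v|,|v'|)=\min(|w|,|w'|)-1$, this follows from the induction hypothesis. That closes the induction.

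\textbf{Where the subtlety lies.} There is essentially no analytic difficulty here; the entire content is the geometry already packaged in Lemmas~\ref{factHalfSpace2} and (implicitly) the definition of $B$. The one point to be careful about is making sure the two ``slabs'' $R_0\cap H_+^\delta$ and $R_0\cap H_-^\delta$ really are disjoint — they meet only on the line $x_1=\delta$ versus $x_1=-\delta$, so in fact their closures are disjoint since $\delta>0$, and Lemma~\ref{factHalfSpace2} even gives containment in the \emph{interiors}, so no boundary issue arises. The other point worth a sentence is that the argument must handle words of different lengths (the statement allows $|w|\neq|w'|$), which is exactly why the induction is set up on $\min(|w|,|w'|)$ with the empty-word base case treated separately rather than on a common length. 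Uniformity in $t$ is automatic since every estimate invoked from Lemma~\ref{factHalfSpace2} holds for all $t\in\RR$.
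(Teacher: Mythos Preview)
Your proof is correct and is essentially the same argument as the paper's, just organized differently: the paper does a direct case split on whether the words share a common prefix (finding the first index where they differ, or noting one is a prefix of the other) and then invokes injectivity of the common-prefix map together with Lemma~\ref{factHalfSpace2}, whereas your induction on $\min(|w|,|w'|)$ peels off the common first letters one at a time to reach the same two base situations. The geometric content---$F_i^t(R_\varepsilon)\subset\INT(R_0\cap H_\pm^\delta)$ and the disjointness of $B$ from those interiors---is identical in both.
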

\begin{proof}
Without loss of generality, suppose that $w\in\{1,2\}^k$ and $w'\in\{1,2\}^{k'}$ with $k'\geq k\geq 0$ and $k'>0$. There are two cases to consider: either there exists $0<j\leq k$ such that $w_j\neq w'_j$, or $k<k'$ and $(w'_1,\ldots,w'_k)=w$.

In the first case, we may assume that $w_i=w_i'$ for $i<j$. By \re{factHalfSpace2:eq1} and \re{factHalfSpace2:eq2},
\[
F_{(w_1,\ldots,w_{j-1})}^t\circ F_1^t(R_\varepsilon)\subset F_{(w_1,\ldots,w_{j-1})}^t(R_0\cap H^\delta_+),
\]
and
\[
F_{(w_1,\ldots,w_{j-1})}^t\circ F_2^t(R_\varepsilon)\subset F_{(w_1,\ldots,w_{j-1})}^t(R_0\cap H^\delta_-),
\]
which are disjoint since $F_1^t$ and $F_2^t$ (and $F^t_\emptyset=\id$, if $j=1$) are injective.

In the second case, suppose that $w'_{k+1}=1$, then
\[
F_{w'}^t(R_\varepsilon)=F_{w}^t\circ F_{1}^t\circ F_{(w_{k+2}',\ldots, w_{k'}')}^t(R_\varepsilon)\subset F_{w}^t(\INT(R_0\cap H_+^\delta)).
\]
By injectivity of $F_w^t$, the right-hand side is disjoint from \[F_w^t\left(\overline{R_\varepsilon\backslash \left(R_0\cap \left[H_+^{\delta}\cup H_-^{\delta}\right]\right)}\right),\] as required. The case $w'_{k+1}=2$ is similar.
\end{proof}

\subsection{The velocity field}
In this section, we define the divergence-free velocity that will collapse $S_\alpha$ to the origin in finite time. It will be constructed from a fundamental (time-independent) local flow. This latter vectorfield is given by applying $\nabla^\perp$ to a mollified and smoothly cutoff piecewise linear function, thus it is automatically divergence free. 

Define the piecewise linear function 
\[
\Psi(x)=\left\{\begin{array}{ll}
x_2&x_1>0\\
-x_2&x_1<0\\
0&x_1=0
\end{array}
\right. 
\]
and let
\[
u=u_\alpha\coloneqq\nabla^\perp (\chi J_{\delta}\Psi),
\]
where $\chi$ is a smooth cutoff supported on $R_{\varepsilon}$ and identically $1$ on $R_0$. The operator $J_\delta$ denotes convolution with a compactly supported mollifier $\rho_\delta(x)=\frac{1}{\delta^2}\rho(\delta x)$ where $\rho\in C^\infty(\RR^2)$ is a non-negative radial function with $\supp \rho\subset B_1(0)$ and $\int_{\RR^2}\rho=1$.

\begin{proposition}\label{propufacts}
The velocity field $u\in C_c^\infty$ defined above has the following properties:
\begin{enumerate}
\item $u(\sigma_1 x)=\sigma_1u(x)$ and $u(\sigma_2 x)=\sigma_2u(x)$, where $\sigma_1$, $\sigma_2$ are the reflections defined in the last section.
\item $u=(-1,0)$ on $R_{0}\cap H_+^{\delta}$, and $u=(1,0)$ on $R_{0}\cap H_-^{\delta}$.   
\item In particular, $\supp \nabla u\subset \overline{R_\varepsilon\backslash (R_0\cap (H_+^\delta\cup H_-^\delta))}$.
\item \[\|u\|_{L^\infty}\asymp \delta^{-1},\quad \|\nabla u\|_{L^\infty}\asymp\delta^{-2}\]
\end{enumerate}
\end{proposition}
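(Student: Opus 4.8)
The plan is to establish each of the four properties of $u=\nabla^\perp(\chi J_\delta\Psi)$ directly from the definition, exploiting the symmetry of $\Psi$ and the geometry of the cutoff $\chi$ and the mollifier $\rho_\delta$.

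For property (1), I would first observe that $\Psi$ is invariant under $\sigma_2$ (reflection in the $y$-axis), since $\Psi(x)$ depends on $x_1$ only through $\sgn(x_1)$, and that $\Psi(\sigma_1 x)=-\Psi(x)$ where $\sigma_1$ is reflection in the $x$-axis. Since $\rho_\delta$ is radial and $\chi$ is supported on the axis-symmetric rectangle $R_\varepsilon$ and equal to $1$ on $R_0$, both $\chi$ and $J_\delta$ commute with $\sigma_1,\sigma_2$ in the appropriate sense, so $(\chi J_\delta\Psi)(\sigma_2 x)=(\chi J_\delta\Psi)(x)$ and $(\chi J_\delta\Psi)(\sigma_1 x)=-(\chi J_\delta\Psi)(x)$. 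Then the claimed equivariance of $u$ follows from the chain rule applied to $\nabla^\perp$, tracking the sign changes: $\nabla^\perp$ composed with a reflection picks up exactly the sign pattern that turns these scalar (anti)symmetries into the stated vector equivariances. This is a short computation once the symmetries of the scalar potential are in hand.

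For properties (2) and (3), the key point is that on $R_0\cap H_+^\delta$, i.e.\ where $x_1\geq\delta$, the mollification $J_\delta\Psi$ agrees with the unmollified function, because $\supp\rho_\delta\subset B_\delta(0)$ and $\Psi(y)=y_2$ for all $y$ with $y_1>0$; hence for $x$ with $x_1\geq\delta$ the convolution only sees the region $y_1>0$ and reproduces $x_2$ exactly. Similarly $J_\delta\Psi(x)=-x_2$ for $x_1\leq-\delta$. Since $\chi\equiv1$ on $R_0$, on $R_0\cap H_+^\delta$ we get $\chi J_\delta\Psi = x_2$, so $u=\nabla^\perp x_2 = (-1,0)$ (with the convention $\nabla^\perp f=(-\p_2 f,\p_1 f)$ or $(\p_2 f,-\p_1 f)$ — I would fix the sign convention consistently with $\cR x=x^\perp$ used earlier so the signs match the statement), and likewise $u=(1,0)$ on $R_0\cap H_-^\delta$. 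Property (3) is then immediate: $u$ is locally constant wherever $\chi\equiv1$ and $J_\delta\Psi$ is affine, which includes all of $R_0$ outside the strip $|x_1|<\delta$; and $u\equiv0$ outside $R_\varepsilon$ since $\chi$ vanishes there; hence $\nabla u$ can only be supported on $\overline{R_\varepsilon\setminus(R_0\cap(H_+^\delta\cup H_-^\delta))}$.

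For property (4), the estimates follow by scaling. We have $\|J_\delta\Psi\|_{L^\infty(R_\varepsilon)}\lesssim 1$ and, since $\Psi$ is Lipschitz with constant $1$, also $\|\nabla J_\delta\Psi\|_{L^\infty}\lesssim 1$; the second derivative is where the mollifier scale enters, $\|\nabla^2 J_\delta\Psi\|_{L^\infty}=\|\nabla^2\Psi * \rho_\delta\|\lesssim\delta^{-1}$ essentially because $\Psi$ is piecewise linear so $\nabla^2\Psi$ is a measure on the line $\{x_1=0\}$ and convolving with $\rho_\delta$ spreads it to height $\lesssim\delta^{-1}$. Meanwhile $\chi$ has $\|\nabla\chi\|\lesssim\varepsilon^{-1}\asymp\delta^{-1}$ (recall $\delta=\alpha\varepsilon$ with $\alpha$ bounded above and below) and $\|\nabla^2\chi\|\lesssim\delta^{-2}$. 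Combining via the product rule, $u=\nabla^\perp(\chi J_\delta\Psi)$ satisfies $\|u\|_{L^\infty}\asymp\delta^{-1}$ (the dominant contribution being $\chi\nabla(J_\delta\Psi)$ near the central strip, where $J_\delta\Psi$ has gradient $\asymp\delta^{-1}$ transitioning across width $\delta$ — here I should double-check that the constant lower bound $\gtrsim\delta^{-1}$ genuinely holds, which it does because $\p_1 J_\delta\Psi\asymp\delta^{-1}$ on an open set) and $\|\nabla u\|_{L^\infty}\asymp\delta^{-2}$.

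The main obstacle I anticipate is bookkeeping rather than conceptual: getting every sign in the $\nabla^\perp$/reflection computation for property (1) correct, and, for property (4), verifying the lower bounds in the $\asymp$ statements (upper bounds are routine, but one must exhibit points where $|u|\gtrsim\delta^{-1}$ and $|\nabla u|\gtrsim\delta^{-2}$). The latter is handled by looking at the mollified kink of $\Psi$ across $\{x_1=0\}$ inside $R_0$, where $\chi\equiv1$ and the derivatives of $J_\delta\Psi$ achieve their natural scaling lower bounds.
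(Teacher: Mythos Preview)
The paper omits the proof entirely, calling it elementary; your overall approach is the natural one and is essentially what the authors have in mind. However, two slips in the execution should be fixed.

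First, in property (1): $\Psi$ is \emph{not} invariant under $\sigma_2$. Since $\Psi(x)=\sgn(x_1)\,x_2$, the reflection $x_1\mapsto -x_1$ flips the sign, so $\Psi(\sigma_2 x)=-\Psi(x)$; in fact $\Psi$ is odd under \emph{both} $\sigma_1$ and $\sigma_2$. The conclusion $u(\sigma_i x)=\sigma_i u(x)$ is still correct, but the chain-rule computation must start from the right parities of the scalar potential.

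Second, and more substantively, in property (4): $\Psi$ is \emph{not} Lipschitz --- it is discontinuous across $\{x_1=0\}$, jumping from $x_2$ to $-x_2$. Consequently $\|\nabla J_\delta\Psi\|_{L^\infty}\asymp\delta^{-1}$ (not $\lesssim 1$) and $\|\nabla^2 J_\delta\Psi\|_{L^\infty}\asymp\delta^{-2}$ (not $\lesssim\delta^{-1}$): the distributional first derivative $\p_1\Psi$ is already a measure on $\{x_1=0\}$, and mollification at scale $\delta$ costs one power of $\delta^{-1}$ per derivative. You implicitly correct yourself later in the paragraph when you write ``$\p_1 J_\delta\Psi\asymp\delta^{-1}$ on an open set'', but the argument as written is internally inconsistent. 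Once you use the correct input --- $\Psi$ locally bounded and discontinuous --- the product-rule bookkeeping with $\|\nabla\chi\|\lesssim\varepsilon^{-1}\asymp\delta^{-1}$ and $\|\nabla^2\chi\|\lesssim\delta^{-2}$ gives the stated $\|u\|_{L^\infty}\asymp\delta^{-1}$ and $\|\nabla u\|_{L^\infty}\asymp\delta^{-2}$ cleanly, with the lower bounds witnessed on the strip $|x_1|<\delta$ inside $R_0$ where $\chi\equiv 1$.
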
The proof of these facts is elementary.

We now introduce rescaled and rotated copies of $u$, corresponding to the images $F_w^t(I)$ for any finite word $w\in \{1,2\}^\ast$. 
For a vectorfield $v$, let
\begin{equation}\label{eqFTildeDefn1}
\widetilde{F}_1^t v(x)\coloneqq\mathcal{R}v\left(\alpha^{-1}\mathcal{R}^{-1}\left[x-\left(1-\int_0^t\eta,0\right)\right]\right)=\mathcal{R} v\circ (F_1^t)^{-1}
\end{equation}
and
\begin{equation}\label{eqFTildeDefn2}
\widetilde{F}_2^t v(x)\coloneqq\mathcal{R}v\left(\alpha^{-1}\mathcal{R}^{-1}\left[x+\left(1-\int_0^t\eta,0\right)\right]\right)=\mathcal{R} v\circ (F_2^t)^{-1}.
\end{equation}
Note that \begin{equation}\label{eqFandSuppTildeF}\supp (\widetilde{F}_i^t v)=F_i^t(\supp (v))\end{equation} and that $\widetilde{F}_i v$ are linear with respect to $v$. 

For any $k$ let $\widetilde{F}_w^t\coloneqq \widetilde{F}_{w_1}^t\circ\ldots\circ \widetilde{F}_{w_k}^t$ for any  $w\in \{1,2\}^k$, i.e.\ for $w\in\{1,2\}^k$
\begin{equation}\label{eqTildeFexpr}
\widetilde{F}_w^t v=\mathcal{R}^kv\circ (F_w^t)^{-1}.
\end{equation} It follows that 
\begin{equation}\label{eqTildeFXDerivSupp}\supp (\nabla \widetilde{F}_w^t v)\subset F_w^t(\supp\nabla v),
\end{equation} and 
\begin{equation}\label{eqTildeFTimeDerivSupp}\supp (\p_t\widetilde{F}_w^t v)\subset F_w^t(\supp\nabla v).\end{equation}
We also define $\widetilde{F}_\emptyset^tv=v$ for all $v:\RR^2\to\RR^2$.

Combining the observations above, we have deduce the following properties of $\widetilde{F}_wu$:
\begin{proposition}\label{factDisjointSuppts}
\begin{enumerate}\item $\supp(\nabla\widetilde{F}_w^tu)\cap\supp(\nabla\widetilde{F}_{w'}^tu)=\emptyset$ for $w,w'\in \{1,2\}^\ast$ with $w\neq w'$. 
\item $\supp(\p_t\widetilde{F}_w^tu)\cap\supp(\p_t\widetilde{F}_{w'}^tu)=\emptyset$ for $w,w'\in \{1,2\}^\ast$ with $w\neq w'$.
\item $\nabla\cdot \widetilde{F}_w u=0$.
\end{enumerate}
\end{proposition}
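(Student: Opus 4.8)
\textbf{Proof proposal for Proposition \ref{factDisjointSuppts}.}

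The plan is to reduce all three assertions to facts that are already essentially in hand. For item (3), the key point is that $\widetilde F_i^t$ is, by \eqref{eqTildeFexpr}, conjugation of the vectorfield by the orientation-preserving isometry-times-dilation $F_i^t$ together with a rotation by $\pi/2$ applied to the output, i.e.\ $\widetilde F_w^t v = \mathcal R^k v\circ (F_w^t)^{-1}$ for $w\in\{1,2\}^k$. Since $\nabla\cdot u=0$ by Proposition \ref{propufacts} (item 3 there, or simply because $u=\nabla^\perp(\cdot)$), and since precomposition with an affine conformal map $F_w^t(x)=c_w + \alpha^k \mathcal R^k x$ scales the divergence by $\alpha^{-k}$ while left-multiplication by the constant rotation $\mathcal R^k$ sends a divergence-free field to a divergence-free field (rotations commute with the divergence operator up to the corresponding rotation of the argument), we get $\nabla\cdot(\widetilde F_w^t u)=0$. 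I would simply note that $\widetilde F_i^t$ preserves the class of divergence-free fields because $\widetilde F_i^t v = \nabla^\perp\big(\alpha\,\psi\circ (F_i^t)^{-1}\big)$ whenever $v=\nabla^\perp\psi$ — this is a one-line computation using $\mathcal R\nabla^\perp = \nabla^\perp\mathcal R^{-1}$ composed with the chain rule — so $\widetilde F_w^t u$ is again a perpendicular gradient, hence divergence-free.

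For items (1) and (2), the strategy is: first identify the supports, then invoke Lemma \ref{factDisjointBorders}. By \eqref{eqTildeFXDerivSupp} and \eqref{eqTildeFTimeDerivSupp}, both $\supp(\nabla\widetilde F_w^t u)$ and $\supp(\p_t\widetilde F_w^t u)$ are contained in $F_w^t(\supp\nabla u)$, and by item (3) of Proposition \ref{propufacts} we have $\supp\nabla u\subset \overline{R_\varepsilon\backslash(R_0\cap(H_+^\delta\cup H_-^\delta))}$. Therefore
\[
\supp(\nabla\widetilde F_w^t u)\cup\supp(\p_t\widetilde F_w^t u)\ \subset\ F_w^t\!\left(\overline{R_\varepsilon\backslash\left(R_0\cap\left[H_+^{\delta}\cup H_-^{\delta}\right]\right)}\right),
\]
and the same with $w'$ in place of $w$. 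For $w\neq w'$, Lemma \ref{factDisjointBorders} says exactly that these two sets are disjoint, which immediately gives both (1) and (2).

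I do not anticipate a genuine obstacle here; the proposition is a bookkeeping corollary of the geometric separation lemma (Lemma \ref{factDisjointBorders}) and the support/conformality properties of the operators $\widetilde F_i^t$ established in \eqref{eqFandSuppTildeF}--\eqref{eqTildeFTimeDerivSupp} and Proposition \ref{propufacts}. The only mild care needed is in item (3): one must make sure the rotation factor $\mathcal R^k$ in \eqref{eqTildeFexpr} does not spoil the divergence-free property, which is why I would phrase the argument through the stream function (every $\widetilde F_w^t u$ is still of the form $\nabla^\perp(\text{scalar})$) rather than trying to manipulate $\nabla\cdot$ directly. That is the step I would write out most carefully; the rest is a direct citation of the preceding lemmas.
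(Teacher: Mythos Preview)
Your proposal is correct and follows essentially the same route as the paper: items (1) and (2) are obtained exactly as you describe, by combining \eqref{eqTildeFXDerivSupp}, \eqref{eqTildeFTimeDerivSupp}, Proposition \ref{propufacts}(3), and Lemma \ref{factDisjointBorders}. For item (3) the paper argues directly from \eqref{eqTildeFexpr} and the affine conformal form $F_w^t(x)=c_{t,w}+\alpha^k\mathcal R^kx$, whereas you go via the stream function; both are short and correct, and your identity $\widetilde F_i^t(\nabla^\perp\psi)=\nabla^\perp(\alpha\,\psi\circ(F_i^t)^{-1})$ checks out.
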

\begin{proof} The first two claims follow from Proposition \partref{propufacts}{.3}, Lemma \ref{factDisjointBorders}, \re{eqTildeFXDerivSupp}, and \re{eqTildeFTimeDerivSupp}.
The third claim follows from \re{eqTildeFexpr}, since for each $w$, $F^t_w(x) = c_{t,w}+\alpha^k\cR^kx$ for some $c_{t,w}\in\RR^2$.
\end{proof}
From Lemma \ref{factHalfSpace2}, and Proposition \partref{propufacts}{.2}, it follows that
\[
\supp(\widetilde{F}_1^t u)\subset R_0\cap H_+^\delta
\subset\{x\,\colon  u(x)= (-1,0)\},
\]and\[
\supp(\widetilde{F}_2^t u)\subset R_0\cap H_-^\delta
\subset\{x\,\colon u(x)=(1,0)\}.
\]
Moreover by \re{eqTildeFexpr}, for $k\in\NN$ and $w\in\{1,2\}^k$, 
\begin{multline*}
\supp(\widetilde{F}_w^t\circ \widetilde{F}_1^t u)= F_w^t(\supp(\widetilde{F}_1^t u))\subset F_w^t(R_0\cap H_+^\delta)\\
\subset\{x\,\colon \widetilde{F}_w^t u(x)=\mathcal{R}^k (-1,0)\},
\end{multline*} and
\[
\supp(\widetilde{F}_w^t\circ \widetilde{F}_2^t u)\subset\{x\,\colon \widetilde{F}_w^t u(x)=\mathcal{R}^k (1,0)\}.
\] 

We can now define a velocity that contracts $S_\alpha$ by the factor $\gamma=1-\delta$ in unit time.
\begin{notation}\label{defnU}
For $t\in\RR$ let
\begin{equation}\label{defnU:eq1}
U(t,x)=U_\alpha(t,x)\coloneqq\eta(t)\sum_{k=0}^\infty \sum_{w\in\{1,2\}^k} \alpha^k\widetilde{F}_w^t u(x).
\end{equation}
\end{notation}
Since $\widetilde{F}_w^t u$ is bounded, this sum converges uniformly for all $t$, in particular, $U(t)$ is weakly divergence free. From the definition of $\widetilde{F}_w^t$ (\ref{eqFTildeDefn1}, \ref{eqFTildeDefn2}, and \ref{eqTildeFexpr}) we see that there exists $C_\eta>0$, independent of $w\in\{1,2\}^\ast$, with $C_\eta\lesssim\|\eta\|_{L^\infty}$ such that
\[
\|\p_t\widetilde{F}_w^t u\|_{L^\infty_{x,t}} \leq C_\eta \frac{\alpha^{-k}-1}{1-\alpha}\|\nabla u\|_{L^\infty} ,
\]
and
\[
\|\nabla\widetilde{F}_w^t u\|_{L^\infty_{x,t}} \leq \alpha^{-k}\|\nabla u\|_{L^\infty} . 
\]
Furthermore, by Propositions \partref{factDisjointSuppts}{.1} and \partref{factDisjointSuppts}{.2}, the supports of derivatives of the summands in \re{defnU:eq1} are pairwise disjoint. In combination with the uniform bounds above, this implies that each partial sum in \re{defnU:eq1} is Lipschitz, with constant bounded independent of $k$. Thus, the uniform limit  $U$ is Lipschitz in spacetime with 
\begin{equation}\label{eqULipConstT}
\|\p_t U\|_{L^\infty_{t,x}}\leq C_{\eta,\p_t\eta}\left(\|u\|_{L^\infty}+(1-\alpha)^{-1} \|\nabla u\|_{L^\infty}\right)
\end{equation} 
and
\begin{equation}\label{eqULipConstX}
\|\nabla U\|_{L^\infty_{t,x}}\leq \|\eta\|_{L^\infty} \|\nabla u\|_{L^\infty}\leq C \delta^{-1},
\end{equation} for some $C_{\eta,\p_t\eta}\lesssim \delta,\ C>0$.

By Proposition \ref{propufacts}, we obtain an $\alpha$-independent estimate, which will be useful later
\[
\|U_\alpha\|_{L^\infty_{t,x}}\leq \|\eta\|_{L^\infty}\sum_{k=0}^\infty 2^{-k/2}\|u\|_{L^\infty}
\leq C.
\]
for some $C>0$ independent of $\alpha$.

\subsection{A flow contracting \texorpdfstring{$S_\alpha$}{Sɑ} to \texorpdfstring{$0$}{0}}

\subsubsection{Contraction of \texorpdfstring{$S_\alpha$}{Sɑ} due to \texorpdfstring{$U$}{U}}
Since $U$ is uniformly Lipschitz, its trajectory map $X_U$ is well defined  and continuous, hence if we can show that $X_U(1,F_w^0(I))=\gamma F_w^0(I)$ for all $w\in\{1,2\}^\ast$ then by \re{eqFracCharacter} it will follow that $X_U(1,S_\alpha)=\gamma S_\alpha$. Recall $\gamma=\gamma_\alpha$ is specified in Notation \ref{notConstants}.

The following lemmas allow us to express  $X_U$, restricted to $S_\alpha$, in terms of only the terms in \re{defnU:eq1} with $k=0,1$, and a dilation.
\begin{lemma}\label{lemUonLines} For any $k\in\NN_0$, $w\in \{1,2\}^k$, $y\in I$ and $t\in[0,1]$, 
\[
U(t,F_w^t(y))=\eta(t)\left(\alpha^k\mathcal{R}^k[u+\alpha \widetilde{F}_1^t u+\alpha\widetilde{F}_2^tu](y) - F_w^0(0) \right)
\]
\end{lemma}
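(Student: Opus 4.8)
The plan is to prove Lemma~\ref{lemUonLines} by induction on $k$, peeling off one symbol of the word $w$ at a time and using the self-similar structure of both the set $S_\alpha$ and the velocity $U$. First I would record the key algebraic identity coming from \re{eqTildeFexpr}: for any word $w\in\{1,2\}^k$ one has $F^t_w = F^0_w(0) + \alpha^k \mathcal{R}^k(\cdot)$ up to the translation bookkeeping, or more precisely $F^t_w(y) = c_{t,w} + \alpha^k\mathcal{R}^k y$, and that $\widetilde F^t_w v = \mathcal{R}^k v\circ (F^t_w)^{-1}$. The base case $k=0$ is essentially the statement that on $I$ (which lies in $R_0$, away from $\supp\nabla u$ and the rescaled copies) only the terms with $k'=0,1$ in \re{defnU:eq1} are nonzero at points of $I$, because by the support computations just before Notation~\ref{defnU}, the summands with $k'\geq 2$ are supported in sets of the form $F^t_{w'}(R_0\cap H^\delta_\pm)$ with $|w'|\geq 2$, which are disjoint from $I$ (this is exactly the content of Lemma~\ref{factHalfSpace2} iterated, together with $I\subset\INT R_0$ but $I\not\subset H^\delta_+\cup H^\delta_-$).

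Next I would carry out the inductive step. Suppose the claim holds for all words of length $k$, and take $w\in\{1,2\}^{k+1}$, writing $w = (w_1, w')$ with $w'\in\{1,2\}^k$. For $y\in I$ and $t\in[0,1]$, evaluate $U(t, F^t_w(y)) = U(t, F^t_{w_1}(F^t_{w'}(y)))$. The idea is that $F^t_{w_1}$ maps $R_0$ (hence $F^t_{w'}(I)\subset F^t_{w'}(R_0)$, which is inside $R_0$) into $R_0\cap H^\delta_{\pm}$ by Lemma~\ref{factHalfSpace2}; on that half-space the $k'=0$ term of $U$ is the constant $\eta(t)(\mp1,0)$ by Proposition~\partref{propufacts}{.2}, and the remaining terms of $U$ restricted there reorganise, via the substitution rule \re{eqTildeFexpr} and the fact that $\widetilde F^t_{w_1}$ applied to the tail of the sum reproduces a scaled-and-rotated copy of the same sum. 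Concretely, one shows
\[
U(t, F^t_{w_1}(z)) = \eta(t)\,\alpha\,\mathcal{R}\Big([u + \alpha\widetilde F^t_1 u + \alpha\widetilde F^t_2 u]\big(\alpha^{-1}\mathcal{R}^{-1}(z - F^0_{w_1}(0))\big)\Big) + (\text{correction})
\]
for $z$ in the relevant sub-rectangle, and then apply the inductive hypothesis with $z = F^t_{w'}(y)$, $z - F^0_{w_1}(0)$ rescaled being a point of the form $F^t_{w'}(y)$ relative to the inner coordinate system. Tracking the translation constants $F^0_w(0)$ carefully — noting $F^0_w(0) = F^0_{w_1}(0) + \alpha\mathcal{R}F^0_{w'}(0)$ — the pieces telescope into the asserted closed form.

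The main obstacle, and the step I would spend the most care on, is the bookkeeping of the translation terms $-F^0_w(0)$ and verifying that the "only $k'=0,1$ contribute" reduction is valid uniformly along the whole trajectory, i.e.\ for all $t\in[0,1]$ and not just at $t=0$: this is where Lemma~\ref{factHalfSpace2} (stated for all $t\in\RR$) and the disjointness Lemma~\ref{factDisjointBorders} do the real work, guaranteeing that as $F^t_w(y)$ moves it never enters the support of a higher-order summand other than the two immediate children $\widetilde F^t_1 u$, $\widetilde F^t_2 u$ of its own cell. Once that localisation is established, the remainder is the linear-algebra identity $\widetilde F^t_w(\alpha^k\,\cdot) = \alpha^k\mathcal{R}^k(\cdot)\circ(F^t_w)^{-1}$ from \re{eqTildeFexpr}, applied termwise, which is routine. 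I would also double-check the degenerate case $w=\emptyset$ separately so that the statement reads correctly with $F^0_\emptyset(0)=0$ and $\mathcal{R}^0=\id$.
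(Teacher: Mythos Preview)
Your inductive approach is correct but takes a different route from the paper. The paper does not induct on $|w|$; instead, for fixed $w\in\{1,2\}^k$, it directly identifies which summands $\alpha^\ell\widetilde F^t_{w'}u$ in \re{defnU:eq1} are nonzero at $F^t_w(y)$ for $y\in I$: a case analysis (either $w'$ extends $w$, or $w'$ branches off from $w$ at some index $\leq k$) shows that only $w'\in\{\emptyset,(w_1),\ldots,w,(w,1),(w,2)\}$ contribute. The proper prefixes each give a constant $\eta(t)\alpha^\ell(-1)^{w_{\ell+1}}\mathcal{R}^\ell(1,0)$ by Proposition~\partref{propufacts}{.2}, the last three give the bracket, and a short separate induction at the end verifies that the prefix contributions sum to $-\eta(t)F^0_w(0)$.

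Your route is more structural: the clean identity underlying your induction is that for $z\in R_\varepsilon$ and $i\in\{1,2\}$,
\[
U(t,F^t_i(z)) = \eta(t)(-1)^i(1,0) + \alpha\,\mathcal{R}\,U(t,z),
\]
since summands with first letter $\neq i$ vanish at $F^t_i(z)$ by Lemma~\ref{factHalfSpace2}, the $k'=0$ term is constant there, and the remaining summands reindex into $\alpha\mathcal{R}U(t,z)$ via \re{eqTildeFexpr}. Iterating this and applying your base case gives the lemma, with the translation bookkeeping $F^0_w(0)=(-1)^{w_1+1}(1,0)+\alpha\mathcal{R}F^0_{w'}(0)$ handling the constants. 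Your displayed formula in the inductive step is garbled --- the argument $\alpha^{-1}\mathcal{R}^{-1}(z-F^0_{w_1}(0))$ is $(F^0_{w_1})^{-1}(z)$ rather than $(F^t_{w_1})^{-1}(z)$, and the bracket on the right should be the full $U$, not its three-term truncation --- but the surrounding prose shows you have the right mechanism. The paper's approach is more direct; yours exposes the self-similarity of $U$ more explicitly.
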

\begin{proof}
We see that for all $t\in[0,1]$, if $w'\in \{1,2\}^\ell$ with $\ell\geq k+2$, then
\[\supp(\widetilde{F}_{w'}^tu)\cap F_w^t(I)= F_{w'}^t(\supp( u))\cap F_w^t(I)=\emptyset.
\]  Indeed if $w'_i=w_i$ for $i=1,\ldots, k$ this follows from the injectivity of $F^t_w$ and the fact that $I\cap F_v^t(R_\varepsilon)=\emptyset$ for any $v\in\{1,2\}^2$. If, on the other hand,  $w_i\neq w'_i$ for some minimal $i\leq k$, we have \[\left(F_{w'}^t(R_\varepsilon)\cap F_w^t(I)\right)\subset\left( F_{(w_1,\ldots,w_i)}^t(R_\varepsilon)\cap F_{(w_1,\ldots,w_{i-1},w_i')}^t(R_\varepsilon)\right)=\emptyset,
\]by Lemma  \ref{factHalfSpace2}

It follows that for $y\in I$, $U(t,F_w^t(y))$ is given by
\begin{multline*}
U(t,F_w^t(y))
=\eta(t)\sum_{\ell=0}^{k-1}\alpha^\ell \widetilde{F}_{(w_1,\ldots, w_\ell)}^tu(F_w^t(y)) + \eta(t)\alpha^{k}\widetilde{F}_w^tu\big|_{F_w^t(y)}\\+ \eta(t)\alpha^{k+1}\widetilde{F}_{w}^t\left(\widetilde{F}_{1}^tu\right)\big|_{F_w^t(y)}+ \eta(t)\alpha^{k+1}\widetilde{F}_{w}^t\left(\widetilde{F}_{2}^tu\right)\big|_{F_w^t(y)}\\
=\eta(t)\sum_{\ell=0}^{k-1}\alpha^\ell (-1)^{w_{\ell+1}}\mathcal{R}^\ell(1,0) + \eta(t)\alpha^{k}\mathcal{R}^k\left(u+\alpha\widetilde{F}_1^tu+\alpha\widetilde{F}_2^tu\right)(y),
\end{multline*}
where we have used Lemma \ref{factHalfSpace2} and Proposition \partref{propufacts}{.2} to see that $u\big |_{F_i(R_\varepsilon)}=(-1)^i(1,0)$.

It remains to check that
\[
-\sum_{\ell=0}^{k-1}\alpha^\ell (-1)^{w_{\ell+1}}\mathcal{R}^\ell(1,0)=F_w^0(0).
\] But this is indeed the case, by a simple inductive argument.
\end{proof}

Similarly, we have
\begin{lemma}\label{lemBoxTranslations}
For $y\in R_\varepsilon$, and $w\in \{1,2\}^\ast$,
\[
F_w^t(y)=F_w^0(y)-F_w^0(0)\int_0^t\eta(s)\,\d s. 
\]
\end{lemma}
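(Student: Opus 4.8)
The plan is to prove the identity by induction on the word length $k=|w|$. Write $a(t)\coloneqq\int_0^t\eta(s)\,\d s$, so that $F_1^t(x)=(1-a(t),0)+\alpha\mathcal{R}x$ and $F_2^t(x)=(-1+a(t),0)+\alpha\mathcal{R}x$. The structural fact driving everything is that, for each fixed $i\in\{1,2\}$, the linear part of $F_i^t$ is $\alpha\mathcal{R}$, independent of $t$; only the centre of the map moves, and the displacement of that centre is directed along $F_i^0(0)$. Since all the $F_i^t$ are affine bijections of the whole plane, I would actually prove the identity with $y$ ranging over $\RR^2$; the restriction to $R_\varepsilon$ plays no role.

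First I would record the base case. For $w=\emptyset$ both sides equal $y$, as $F_\emptyset^t=F_\emptyset^0=\id$ and $F_\emptyset^0(0)=0$. For $|w|=1$, since $F_1^0(0)=(1,0)$ and $F_2^0(0)=(-1,0)$, one checks directly from the definitions that $F_i^t(x)=F_i^0(x)-a(t)\,F_i^0(0)$ for all $x\in\RR^2$ and $i\in\{1,2\}$; this is exactly the claim, and it is the form of the identity that will be propagated by the induction.

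For the inductive step, given $w=(w_1,w')\in\{1,2\}^k$ with $i=w_1$ and $w'=(w_2,\ldots,w_k)\in\{1,2\}^{k-1}$, I would write $F_w^t(y)=F_i^t\big(F_{w'}^t(y)\big)$, substitute the inductive hypothesis $F_{w'}^t(y)=F_{w'}^0(y)-a(t)\,F_{w'}^0(0)$, then apply $F_i^t(z)=F_i^0(0)+\alpha\mathcal{R}z-a(t)\,F_i^0(0)$ and use linearity of $\alpha\mathcal{R}$ to gather the coefficients of $a(t)$. This produces $F_i^0\big(F_{w'}^0(y)\big)-a(t)\big(\alpha\mathcal{R}F_{w'}^0(0)+F_i^0(0)\big)$, and since $\alpha\mathcal{R}F_{w'}^0(0)+F_i^0(0)=F_i^0\big(F_{w'}^0(0)\big)=F_w^0(0)$ while $F_i^0\big(F_{w'}^0(y)\big)=F_w^0(y)$, the identity for $w$ follows.

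There is no genuine obstacle here; the only bookkeeping that needs attention is verifying $\alpha\mathcal{R}F_{w'}^0(0)+F_i^0(0)=F_w^0(0)$, which is immediate from $F_w^0=F_{w_1}^0\circ\cdots\circ F_{w_k}^0$. Alternatively, one can bypass the induction by expanding $F_w^t$ as a single affine map $\alpha^k\mathcal{R}^k y+c_w(t)$ and noting, exactly as in the closing inductive identity of the proof of Lemma~\ref{lemUonLines}, that $c_w(t)=(1-a(t))\,c_w(0)$: each factor contributes a translation term whose $t$-dependent part is $-a(t)$ times its $t$-independent part because the displacement of the centre of $F_i^t$ is precisely $-a(t)\,F_i^0(0)$. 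Since $c_w(0)=F_w^0(0)$, this gives $F_w^t(y)=F_w^0(y)-a(t)\,F_w^0(0)$ at once.
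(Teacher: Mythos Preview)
Your proof is correct and follows essentially the same route as the paper: induction on the word length, peeling off the leftmost letter $w_1$ and using linearity of $\alpha\mathcal{R}$ to collect the $a(t)$-terms. Your observation that the restriction $y\in R_\varepsilon$ is irrelevant is also accurate; the identity holds for all $y\in\RR^2$.
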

\begin{proof}
For $w\in\{\emptyset,(1),(2)\}$ this follows directly from the definition of $F_w^t$. Suppose by induction that the identity holds for $w'\in \{1,2\}^k$, we check that it is also true for $w=(1,w'_1,\ldots,w'_k)$, the other case being similar. In this case, by definition and hypothesis we have,
\begin{multline*}
F_w^t(y)=\left(1-\int_0^t\eta,0\right)+\alpha \mathcal{R}(F_{w'}^t(y)) \\
= \left(1-\int_0^t\eta,0\right)+\alpha \mathcal{R}\left(F_{w'}^0(y)-F_{w'}^0(0)\int_0^t\eta\right) \\
=F_w^0(y)-F_w^0(0)\int_0^t\eta. 
\end{multline*}
\end{proof}

\begin{lemma}\label{lemY}
The trajectory map associated to the velocity
\[
v\coloneqq\eta(t)[u+\alpha \widetilde{F}_1^t u +\alpha \widetilde{F}_2^t u]
\]
satisfies $X_v(t,I)=\left[X_v(t,(-1,0)),X_v(t,(1,0))\right]\x\{0\}\subset I$ for $t\in \RR$.
\end{lemma}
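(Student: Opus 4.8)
The plan is to show that the line $L:=\RR\x\{0\}$ is invariant under $X_v$, to reduce the problem to a one-dimensional ODE along $L$, and then to rule out escape of the endpoints $(\pm 1,0)$ from $I$ by a barrier argument.

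First I would check that the second component of $v$ vanishes on $L$. For the summand $u$ this is the reflection symmetry $u(\sigma_1 x)=\sigma_1 u(x)$ from Proposition \partref{propufacts}{.1}. For $\widetilde{F}_i^t u$, note that $(\mathcal{R}w)_2=w_1$, so the second component of $\widetilde{F}_i^t u(x)$ equals the first component of $u$ at $(F_i^t)^{-1}(x)$; by \re{eqFTildeDefn1}--\re{eqFTildeDefn2}, if $x\in L$ then $(F_i^t)^{-1}(x)$ lies on the $x_2$-axis, where the first component of $u$ vanishes by the other symmetry $u(\sigma_2 x)=\sigma_2 u(x)$. Hence $v_2\equiv 0$ on $L$. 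Since $v$ is a finite sum of fields that are $C^\infty$ in $x$ and jointly continuous in $(t,x)$ (the time dependence entering only through $\eta(t)$ and $\int_0^t\eta$), $v(t,\cdot)$ is Lipschitz uniformly in $t$, so $X_v$ exists and is unique by classical ODE theory; the line $L$ is invariant, and $x_1\mapsto X_v(t,(x_1,0))_1$ is order-preserving. For $t\notin[0,1]$ one has $\eta(t)=0$, hence $v\equiv 0$ and the claim is trivial, so from now on $t\in[0,1]$.

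The heart of the matter is the sign of $v_1$ on $L$ near $[-1,1]$. Writing $\zeta=\alpha^{-1}(1-\int_0^t\eta-x_1)$, one reads off from \re{eqFTildeDefn1}, \re{eqTildeFexpr}, Lemma \ref{factHalfSpace2} and Proposition \partref{propufacts}{.2} that: for $0\le x_1\le 1$ the only summands of $v$ that are nonzero on $L$ are $\eta(t)u$ and $\eta(t)\alpha\widetilde{F}_1^t u$; the first component of $u(x_1,0)$ is $-1$ once $x_1\ge\delta$ and in general has the sign of $-x_1$; and $\alpha\widetilde{F}_1^t u(x_1,0)$ is horizontal (a rotation of the vertical value of $u$ on the $x_2$-axis), pointing towards the origin for $x_1<1-\int_0^t\eta$ and outwards only on the sliver $1-\int_0^t\eta<x_1\le 1$, with size there at most $c\,(x_1-1+\int_0^t\eta)$, where $c=2\delta^{-1}\int_\RR\rho(0,s)\,\d s$ governs the size of $u$ along the $x_2$-axis near the origin. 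Since $\int_0^1\eta=\delta$, at $x_1\le 1$ this outward push is at most $2\int_\RR\rho(0,s)\,\d s$; fixing the radial profile $\rho$ so that $2\int_\RR\rho(0,s)\,\d s\le 1$ (a harmless normalisation, in the spirit of the standing assumption $\|\eta\|_{L^\infty}\lesssim\delta$) it is therefore dominated by the inward drift $-1$ coming from $u$, and one gets $v_1(t,\cdot)\le 0$ on $\{x_2=0,\ 0\le x_1\le 1+\rho_0\}$ for some $\rho_0>0$; by the symmetry $v(\sigma_2 x)=\sigma_2 v(x)$, likewise $v_1(t,\cdot)\ge 0$ on $\{x_2=0,\ -1-\rho_0\le x_1\le 0\}$. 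Moreover $v(t,(0,0))=0$, since $(0,0)$ is fixed by both reflections.

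A barrier argument then concludes: were the trajectory of $(1,0)$ to leave $I$, it would by continuity first reach $x_1=1$ and then increase through the slab $\{x_2=0,\ 1\le x_1\le 1+\rho_0\}$, on which $\p_t X_v=v_1\le 0$ -- a contradiction; it also cannot cross the fixed point $(0,0)$, so it stays in $[0,1]\x\{0\}\subset I$. Symmetrically the trajectory of $(-1,0)$ stays in $[-1,0]\x\{0\}$, and by order-preservation every trajectory starting in $I$ stays in $I$; then $X_v(t,\cdot)$ restricted to $I$ is a continuous strictly increasing self-map of $[-1,1]$, so its image is exactly $[X_v(t,(-1,0)),X_v(t,(1,0))]\x\{0\}$, which is the claim. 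The one delicate point is the quantitative estimate of the preceding paragraph: that the rescaled copy $\alpha\widetilde{F}_1^t u$ exerts only a small outward drift at the ends of $I$. This works because $(F_1^t)^{-1}$ carries a neighbourhood of $(1,0)$ in $I$ onto the part of the $x_2$-axis nearest the mollified corner of $\Psi$ at the origin -- where $u$ is smallest -- together with the fact that the total horizontal displacement $\int_0^1\eta$ equals exactly the mollification radius $\delta$.
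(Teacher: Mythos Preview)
Your reduction to the invariance of the line $L=\RR\times\{0\}$ is exactly what the paper does, and your symmetry argument for $v_2|_L\equiv 0$ matches the paper's. The divergence is in how you handle the endpoints.

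The paper avoids any barrier argument entirely. It simply observes that the curve $r(t)\coloneqq(1-\int_0^t\eta,0)$ is \emph{exactly} the trajectory of $(1,0)$: since $r(t)=F_1^t(0)$, one has $(F_1^t)^{-1}(r(t))=0$ and hence $\widetilde{F}_1^t u(r(t))=\mathcal{R}u(0)=0$; and $r(t)\notin F_2^t(R_\varepsilon)$, so $\widetilde{F}_2^t u(r(t))=0$ as well. Thus $v(t,r(t))=\eta(t)u(r(t))=-\eta(t)(1,0)=\dot r(t)$, and uniqueness gives $X_v(t,(1,0))=r(t)\in I$. No quantitative estimate on $\rho$ is needed.

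Your approach works only under the extra hypothesis $2\int_\RR\rho(0,s)\,\d s<1$. This is not assumed anywhere in the paper's setup for $u$, and it fails for many admissible radial mollifiers (for instance anything close to uniform on the disc). So as written you are proving the lemma for a restricted class of $u$, not for the $u$ the paper fixed. Calling this ``a harmless normalisation'' glosses over the fact that the statement you were asked to prove does not allow you to re-choose $\rho$.

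There is a second, more consequential drawback: even where your argument succeeds it only yields containment $X_v(t,(1,0))\in I$, whereas the paper's direct computation gives the \emph{exact} value $X_v(1,(1,0))=(1-\delta,0)$. That exact value is what is actually used in the next lemma (Lemma~\ref{lemXI}) to obtain $X_U(1,F_w^0(I))=\gamma F_w^0(I)$. So the paper's route is not just cleaner; it delivers strictly more information that the subsequent argument relies on.
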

\begin{proof}
Since $u\in C_c^\infty$, $v$ admits a well-defined, continuous trajectory map $X_v$. In particular, by uniqueness of trajectories, it suffices to check that $X_v(t,I)\subset \RR\x\{0\}$ and calculate $X_v(t,(\pm 1,0))$.

By the reflective symmetries in Proposition \partref{propufacts}{.1} we see that $u(x_1,0)=(u_1(x_1,0),0)$ and $u(0,x_2)=(0,u_2(0,x_2))$. Now as $(F_i^t)^{-1}(\RR\x\{0\})\subset \{0\}\x\RR$, we have 
\[\widetilde{F}_i^t u(x_1,0)=\mathcal{R} u ((F_i^t)^{-1}(x_1,0))\in\mathcal{R}u(\{0\}\x \RR)\subset\RR\x\{0\}.
\]
Hence indeed $v(\RR\x\{0\},t)\subset\RR\x\{0\}$ for $t\in[0,1]$, thus $X_v$ preserves $\RR\x\{0\}$.

It now suffices to check that
\[
X_v(t,(\pm 1,0))=\pm \left(1-\int_0^t\eta,0\right)\eqqcolon \pm r(t)\in I,
\]
but indeed $\widetilde{F}_i^t u(r(t))=0$ for $i=1,2$, so
\[
v(\pm r(t)) = \pm \eta(t)(-1,0) = \pm\frac{\d }{\d t}r(t),\quad r(0)=\pm(1,0).
\]
Hence $\pm r(t)=X_v(t,\pm(1,0))$ as claimed. 
\end{proof}

The above lemmas allow us to calculate $X_U(t,F_w^0(I))$ for all $w\in \{1,2\}^\ast$.
\begin{lemma}\label{lemXI}
For any $w\in \{1,2\}^\ast$, $X_v$ as defined in Lemma \ref{lemY}, and $y\in I$
\[
X_U(t,F_w^0(y))=F_w^t\circ X_v(t,y).
\] 
In particular
\begin{equation}\label{lemXI:eq2}
X_U(1,F_w^0(I))=\gamma F_w^0(I).
\end{equation}
\end{lemma}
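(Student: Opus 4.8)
The plan is to prove the claimed identity $X_U(t,F_w^0(y))=F_w^t\circ X_v(t,y)$ by the uniqueness of trajectory maps: since $U$ is uniformly Lipschitz, $X_U$ is the unique trajectory map, so it suffices to verify that the right-hand side $Z(t,y)\coloneqq F_w^t(X_v(t,y))$ satisfies the correct ODE and initial condition. The initial condition is immediate: $F_w^0(X_v(0,y))=F_w^0(y)$. For the ODE, I would differentiate $Z$ in $t$ using the product/chain rule, obtaining
\[
\partial_t Z(t,y)=(\partial_t F_w^t)\big|_{X_v(t,y)}+\D F_w^t\big|_{X_v(t,y)}\cdot\partial_t X_v(t,y).
\]
The second term, using $\partial_t X_v=v(t,X_v)$ and the linearity structure of $F_w^t$ (which has derivative $\alpha^k\cR^k$), becomes $\alpha^k\cR^k v(t,X_v(t,y))=\eta(t)\alpha^k\cR^k[u+\alpha\widetilde F_1^t u+\alpha\widetilde F_2^t u](X_v(t,y))$. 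For the first term, Lemma \ref{lemBoxTranslations} gives $F_w^t(z)=F_w^0(z)-F_w^0(0)\int_0^t\eta$, so $\partial_t F_w^t(z)=-\eta(t)F_w^0(0)$, independent of $z$.

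Adding these, and noting that by Lemma \ref{lemY} $X_v(t,y)\in I$ whenever $y\in I$, I can compare with $U(t,F_w^t(X_v(t,y)))=U(t,Z(t,y))$. Here is the key point: since $X_v(t,y)\in I$, Lemma \ref{lemUonLines} applies with $y$ replaced by $X_v(t,y)$, giving
\[
U(t,F_w^t(X_v(t,y)))=\eta(t)\big(\alpha^k\cR^k[u+\alpha\widetilde F_1^t u+\alpha\widetilde F_2^t u](X_v(t,y))-F_w^0(0)\big),
\]
which is exactly $\partial_t Z(t,y)$ computed above. Hence $Z$ solves $\partial_t Z=U(t,Z)$, $Z(0,y)=F_w^0(y)$, so by uniqueness $Z=X_U(\cdot,F_w^0(\cdot))$ on $I$. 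The main obstacle is bookkeeping rather than conceptual: one must be careful that Lemma \ref{lemUonLines}, as stated, is applied at points of $F_w^t(I)$, and that $X_v(t,I)\subseteq I$ genuinely puts $F_w^t(X_v(t,y))$ in $F_w^t(I)$ so the lemma is legitimately invoked.

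Finally, for \eqref{lemXI:eq2}: taking $t=1$ and using $\int_0^1\eta=\delta$, Lemma \ref{lemBoxTranslations} gives $F_w^1(z)=F_w^0(z)-\delta F_w^0(0)$. Lemma \ref{lemY} shows $X_v(1,(\pm1,0))=\pm(1-\delta,0)=\pm\gamma(1,0)$ and $X_v(1,I)=[-\gamma,\gamma]\x\{0\}=\gamma I$. Therefore $X_U(1,F_w^0(I))=F_w^1(\gamma I)=F_w^0(\gamma I)-\delta F_w^0(0)$. Since $F_w^0$ is affine with linear part $\alpha^k\cR^k$, one has $F_w^0(\gamma y)=\gamma F_w^0(y)+(1-\gamma)F_w^0(0)=\gamma F_w^0(y)+\delta F_w^0(0)$, so $F_w^0(\gamma I)-\delta F_w^0(0)=\gamma F_w^0(I)$, giving the claim. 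This last step is a short affine computation and presents no real difficulty; the substantive content is the uniqueness argument in the first two paragraphs.
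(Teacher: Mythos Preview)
Your proof is correct and follows essentially the same approach as the paper: you verify that $Z(t,y)=F_w^t\circ X_v(t,y)$ satisfies the ODE for $X_U$ by computing $\partial_t Z$ via Lemma \ref{lemBoxTranslations} and the chain rule, invoking Lemma \ref{lemY} to ensure $X_v(t,y)\in I$ so that Lemma \ref{lemUonLines} applies, and then concluding by uniqueness of trajectories. Your derivation of \eqref{lemXI:eq2} via the affine identity $F_w^0(\gamma y)=\gamma F_w^0(y)+\delta F_w^0(0)$ is likewise the same computation the paper performs, just phrased slightly differently.
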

\begin{proof}
The second claim follows from the first by Lemmas \ref{lemBoxTranslations} and \ref{lemY}. Indeed,
\begin{multline*}
F_w^1\circ X_v(1,I)=F_w^0(X_v(1,I))-F_w^0(0)\int_0^1\eta\\
 =F_w^0([\delta-1,1-\delta]\x\{0\})-F_w^0(0)\delta
=(1-\delta)F_w^0(I)=\gamma F_w^0(I),
\end{multline*}
since $F_w^0$ is affine linear.

To prove the main claim, fix $y\in I$ and $w\in\{1,2\}^k$. By Lemma \ref{lemBoxTranslations}, we have
\[
F_w^t\circ X_v(t,y) =F_w^0\circ X_v(t,y)-F_w^0(0)\int_0^t\eta.
\] 
Thus, (using the fact that $F_w^0$ is affine linear),
\begin{multline*}
\frac{\d}{\d t}\left(F_w^t\circ X_v(t,y) \right)=F_w^0(\p_t X_v(t,y))-F_w^0(0) - F_w^0(0)\eta(t)\\
=\alpha^k\mathcal{R}^k\eta(t)\left[u+\alpha\widetilde{F}_1^t u+\alpha\widetilde{F}_2^t u\right]_{X_v(t,y)} - F_w^0(0)\eta(t)
\end{multline*}
By Lemma \ref{lemY}, $y\in I$ implies that $X_v(y,t)\in I$, hence applying Lemma \ref{lemUonLines} yields
\[
\frac{\d}{\d t}\left(F_w^t\circ X_v(t,y) \right)=U(t,F_w^t(X_v(t,y))),\quad F_w^0\circ X_v(0,y)=F_w^0(y).
\]
The result follows by uniqueness of the trajectory $X_U(\cdot,F_w^0(y))$.
\end{proof}

As remarked at the beginning of the section, by \re{lemXI:eq2} we have proved the following proposition.

\begin{proposition}\label{propU}
\[
X_U(1,S_\alpha)=\gamma S_\alpha.
\]
\end{proposition}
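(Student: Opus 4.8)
The plan is to deduce Proposition~\ref{propU} directly from Lemma~\ref{lemXI}, specifically from \re{lemXI:eq2}, together with the characterisation \re{eqFracCharacter} of $S_\alpha$ as the closure of $\bigcup_{w\in\{1,2\}^\ast}F_w^0(I)$. Since $U$ is uniformly Lipschitz in space-time, its trajectory map $X_U$ is well defined and, for each fixed $t$, the map $x\mapsto X_U(t,x)$ is a homeomorphism of $\RR^2$ (continuous with continuous inverse $X_U(0,t,\cdot)$ obtained by running the flow backwards). In particular $X_U(1,\cdot)$ is continuous, so it commutes with closures of bounded sets: $X_U(1,\overline{A})\subseteq\overline{X_U(1,A)}$, and by applying the same to the inverse flow one gets $X_U(1,\overline{A})=\overline{X_U(1,A)}$.

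Carrying this out: first, by \re{lemXI:eq2} we have $X_U(1,F_w^0(I))=\gamma F_w^0(I)$ for every $w\in\{1,2\}^\ast$, hence
\[
X_U\!\left(1,\ \bigcup_{w\in\{1,2\}^\ast}F_w^0(I)\right)=\bigcup_{w\in\{1,2\}^\ast}\gamma F_w^0(I)=\gamma\bigcup_{w\in\{1,2\}^\ast}F_w^0(I).
\]
Second, take closures on both sides. On the left, using that $X_U(1,\cdot)$ is a homeomorphism (so it maps the closure of a set to the closure of its image), the left side becomes $X_U(1,S_\alpha)$ by \re{eqFracCharacter}. On the right, since $x\mapsto\gamma x$ is a homeomorphism of $\RR^2$, $\overline{\gamma B}=\gamma\overline{B}$, so the right side becomes $\gamma\overline{\bigcup_w F_w^0(I)}=\gamma S_\alpha$, again by \re{eqFracCharacter}. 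Combining gives $X_U(1,S_\alpha)=\gamma S_\alpha$.

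There is essentially no obstacle here; the only points requiring a word of care are (i) that $S_\alpha\subset R_0$ is bounded, so that the set whose closure we take is bounded and the homeomorphism argument for commuting with closure is immediate, and (ii) that the flow map at a fixed time is genuinely a homeomorphism, which follows from the Lipschitz bounds \re{eqULipConstT}--\re{eqULipConstX} on $U$ via the classical Picard--Lindelöf theory and reversibility of the ODE \re{eqDefnOfLagFlow}. Both of these are already in hand from the preceding construction, so the proof is just the two-line assembly indicated above.
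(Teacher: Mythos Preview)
Your proof is correct and follows exactly the approach the paper takes: the paper states at the start of the subsection that, since $U$ is uniformly Lipschitz, $X_U$ is well defined and continuous, so once \re{lemXI:eq2} gives $X_U(1,F_w^0(I))=\gamma F_w^0(I)$ for all $w$, the result follows from \re{eqFracCharacter}. Your write-up simply makes explicit the closure/homeomorphism step that the paper leaves implicit.
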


\subsubsection*{A contraction to $0$}
We now define a velocity that flows $S_\alpha$ to the origin in finite time. Let $\xi\in(\sqrt{\gamma},1)$ then define, for $t\in\RR$,
\begin{equation}\label{eqW}
W(x,t)\coloneqq\sum_{k=0}^\infty \left(\frac{\gamma}{\xi}\right)^{k}U\left(\frac{t-t_k}{\xi^k},\frac{x}{\gamma^k}\right)
\end{equation}
where $t_0=0$, and $t_k\coloneqq\sum_{j=0}^{k-1}\xi^j<\frac{1}{1-\xi}$ for $k>0$. For the summands in \re{eqW} we use the notation \begin{equation}\label{eqWk}
W_k\coloneqq \left(\frac{\gamma}{\xi}\right)^{k}U\left(\frac{t-t_k}{\xi^k},\frac{x}{\gamma^k}\right)
\end{equation}
The following lemma completes the proof of Theorem \ref{thmFractal1}.
\begin{lemma}
For any $x\in S_\alpha$, $X_W(t,x)\to 0$ as $t\to\frac{1}{1-\xi}$ where $X_W$ is the trajectory map corresponding to $W$.
\end{lemma}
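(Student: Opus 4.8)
The plan is to iterate Proposition~\ref{propU} through the self-similar scaling structure encoded in~\re{eqW}. The key observation is that $W_k$ as defined in~\re{eqWk} is, up to a time shift and a dilation by $\gamma^k$, just a copy of $U$ with the time variable rescaled by $\xi^k$. Because $U(s,\cdot)$ is supported in time on $s\in(0,1)$ (inherited from $\eta$), the summand $W_k$ is active only for $t\in(t_k,t_k+\xi^k)=(t_k,t_{k+1})$. Hence on the interval $[t_k,t_{k+1}]$ the velocity $W$ is governed essentially by $W_k$ alone --- one must check that the earlier summands $W_0,\dots,W_{k-1}$ have already switched off by time $t_k$ and the later ones $W_{k+1},\dots$ have not yet switched on, which is exactly the statement $\supp_t\,\eta\subset(0,1)$ together with the definition $t_k=\sum_{j<k}\xi^j$. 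First I would make this "one active summand at a time" structure precise, and in particular record that $W$ is uniformly Lipschitz in space-time (summing the $W_k$ estimates, using $\gamma/\xi<1$ so the $L^\infty$ norms are summable and $\xi<1$ controls the time-rescaling of the Lipschitz constants --- here $\xi>\sqrt\gamma$ is needed so that $(\gamma/\xi)\cdot\xi^{-1}=\gamma/\xi^2<1$, giving summability of $\|\nabla W_k\|_\infty$), so that $X_W$ is well defined and continuous on $[0,\tfrac1{1-\xi})$.

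Next I would prove by induction on $k$ the claim that $X_W(t_k,S_\alpha)=\gamma^k S_\alpha$. The base case $k=0$ is trivial. For the inductive step, on $[t_k,t_{k+1}]$ the flow of $W$ restricted to (a neighbourhood of) $\gamma^k S_\alpha$ coincides with the flow of $W_k$; substituting $s=(t-t_k)/\xi^k$ and $z=x/\gamma^k$ transforms the ODE $\dot x = W_k(t,x)$ into $\dot z = U(s,z)$, so that
\[
X_W(t,\gamma^k a)=\gamma^k\,X_U\!\left(\tfrac{t-t_k}{\xi^k},a\right)\qquad\text{for }a\in S_\alpha,\ t\in[t_k,t_{k+1}].
\]
Evaluating at $t=t_{k+1}$ (so $s=1$) and invoking Proposition~\ref{propU}, $X_U(1,S_\alpha)=\gamma S_\alpha$, gives $X_W(t_{k+1},S_\alpha)=\gamma^{k+1}S_\alpha$, closing the induction. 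In particular, for $x\in S_\alpha$ we get $|X_W(t_k,x)|\le \gamma^k\,\mathrm{diam}(S_\alpha)\le \gamma^k(2\sqrt2+2)\to 0$ as $k\to\infty$, and since $\|W\|_{L^\infty}$ is finite and the intervals $[t_k,t_{k+1}]$ have length $\xi^k\to0$, the trajectory moves by at most $\|W\|_\infty\xi^k$ on $[t_k,t_{k+1}]$, so $X_W(t,x)\to 0$ as $t\to t_\infty=\tfrac1{1-\xi}$, not merely along the sequence $t_k$.

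The main obstacle is the bookkeeping in the inductive step: one must verify carefully that near the set $\gamma^k S_\alpha$ (which is where the trajectories of interest live) only the summand $W_k$ contributes. This requires confirming that $\gamma^k S_\alpha\subset \gamma^k R_0$ lies inside the region where the rescaled cutoffs of $W_{k+1},W_{k+2},\dots$ vanish --- i.e. that the spatial supports nest correctly under the $\gamma$-dilations --- and simultaneously that the time supports are disjoint, which is the arithmetic $t_{k+1}-t_k=\xi^k$ matching the length of the time-support of $U(\cdot/\xi^k,\cdot)$ exactly. A secondary point to handle with care is that $X_W$ is a genuine (unique) Lagrangian flow on all of $\RR^2$, so that the identity $X_W(t,\gamma^k a)=\gamma^k X_U((t-t_k)/\xi^k,a)$ can be propagated by uniqueness of trajectories rather than just established locally; this is where the uniform Lipschitz bound on $W$, established in the first step, does the work. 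Everything else is a routine change of variables and a geometric-series estimate.
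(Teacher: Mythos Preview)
Your approach is essentially the paper's: disjoint time supports of the $W_k$, the induction $X_W(t_k,S_\alpha)=\gamma^kS_\alpha$ via the rescaling of $U$ and Proposition~\ref{propU}, then a uniform squeeze between successive $t_k$. The only substantive difference is cosmetic: the paper controls trajectories on $[t_k,t_{k+1}]$ by the spatial confinement $\supp W_k(t,\cdot)\subset\gamma^kR_\varepsilon$, whereas you use the cruder bound $\|W\|_{L^\infty}\xi^k$ on the displacement; both work.

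One point to correct: your claim that $W$ is \emph{uniformly} Lipschitz in space-time, justified by ``summability of $\|\nabla W_k\|_\infty$'' with factor $\gamma/\xi^2<1$, is wrong. The spatial gradient scales as $\|\nabla_x W_k\|_{L^\infty}=\xi^{-k}\|\nabla U\|_{L^\infty}$, which blows up; the factor $(\gamma/\xi^2)^k$ you computed is the \emph{time} Lipschitz constant. So $W$ is only Lipschitz on $[0,T]\times\RR^2$ for each $T<\tfrac1{1-\xi}$ (cf.\ Proposition~\ref{propWregularity}). This does not damage the argument: Lipschitz on compact subintervals already gives a unique $X_W$ on $[0,\tfrac1{1-\xi})$, and your final convergence step uses only the finiteness of $\|W\|_{L^\infty}$, not of $\|\nabla W\|_{L^\infty}$. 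Relatedly, the worry in your last paragraph about spatial supports of $W_{k+1},W_{k+2},\dots$ is superfluous, since the time-support disjointness you already established makes $W\equiv W_k$ on $(t_k,t_{k+1})$ everywhere in space.
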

\begin{proof}
The trajectory map corresponding to $W_k$ is
\[
X_{W_k}(t,x)=\gamma^kX_U\left(\frac{t-t_k}{\xi^k},\frac{x}{\gamma^k}\right),
\] 
by a simple time rescaling argument, and Lemma \ref{factXrescale}. It follows inductively that
\[
X_W(t_{k},S_\alpha)=\gamma^k S_\alpha\subset \gamma^k R_\varepsilon.
\]
For $t\in(t_k,t_{k+1})$ observe that $\supp W_k(t,\cdot)\subset\gamma^k R_\varepsilon$. Combining these facts yields
\[
X_W(t,S_\alpha)\subset\gamma^k R_\varepsilon,\quad \mbox{ for }\quad t\in[t_k,t_{k+1}],
\] 
so $X_W(t,x)\to 0$ as $t\to\frac{1}{1-\xi}$ uniformly with respect to $x\in S_\alpha$.
\end{proof}

Finally, we verify the Sobolev regularity of the velocity $W$.
\begin{proposition}\label{propWregularity}
The map $W$ is uniformly continuous on $[0,\frac{1}{1-\xi}]\x\RR^2$ and uniformly Lipschitz on $[0,T]\x\RR^2$ for any $T\in(0,\frac{1}{1-\xi})$. Moreover $W\in L^q(0,\frac{1}{1-\xi};W^{1,p})$ for any $p\in[1,\infty)$, $1\leq q\leq\infty$ such that
\begin{equation}\label{propWregularity:eqPQ}
p(1-1/q)<\frac{\log\gamma}{\log\xi}
\end{equation}
\end{proposition}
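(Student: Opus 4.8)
\textbf{Proof plan for Proposition \ref{propWregularity}.}

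The plan is to analyse $W$ scale-by-scale using its definition \re{eqW}. First I would record the continuity and Lipschitz properties. Each summand $W_k(t,x)=(\gamma/\xi)^k U((t-t_k)/\xi^k,x/\gamma^k)$ is supported, in $x$, in $\gamma^k R_\varepsilon$ and, in $t$, in $[t_k,t_{k+1}]$ (since $\eta$ is supported in $(0,1)$ and the time-argument of $U$ is $(t-t_k)/\xi^k$). Hence the time-supports of the $W_k$ are essentially disjoint (they overlap only at the endpoints $t_k$, where $\eta$ vanishes), so on any slab $t\le T<\tfrac1{1-\xi}$ only finitely many $W_k$ are nonzero and the sum is locally a single rescaled copy of $U$; since $U$ is Lipschitz in space-time with the bounds \re{eqULipConstT}--\re{eqULipConstX}, uniform continuity on $[0,\tfrac1{1-\xi}]\x\RR^2$ follows from the fact that $\|W_k\|_{L^\infty}=(\gamma/\xi)^k\|U\|_{L^\infty}\to0$ (as $\gamma<\xi$) together with the finite-overlap structure, and the uniform Lipschitz bound on $[0,T]\x\RR^2$ follows because only $k$ up to some $k(T)$ contribute there.

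Next I would estimate $\|\nabla W(t,\cdot)\|_{L^p}$ for fixed $t$. For $t\in[t_k,t_{k+1})$ we have $W(t,\cdot)=W_k(t,\cdot)$ (plus possibly $W_{k+1}$ at the shared endpoint, which is negligible), and by the chain rule $\nabla_x W_k(t,x)=(\gamma/\xi)^k\gamma^{-k}(\nabla U)((t-t_k)/\xi^k,x/\gamma^k)=\xi^{-k}(\nabla U)(\cdot,x/\gamma^k)$. Changing variables $x=\gamma^k y$ multiplies the $L^p$ norm (in two dimensions) by $\gamma^{2k/p}$, so
\[
\|\nabla W(t,\cdot)\|_{L^p}\le \xi^{-k}\gamma^{2k/p}\|\nabla U(s,\cdot)\|_{L^p}\le C\,\xi^{-k}\gamma^{2k/p},
\]
uniformly in $s=(t-t_k)/\xi^k\in[0,1]$, using $\|\nabla U\|_{L^\infty_{t,x}}\le C\delta^{-1}$ from \re{eqULipConstX} and the fact that $\nabla U(t,\cdot)$ is supported in $R_\varepsilon$, a fixed bounded set. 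The same change of variables gives $\|W(t,\cdot)\|_{L^p}\le C(\gamma/\xi)^k\gamma^{2k/p}$, which is dominated by the gradient estimate since $\gamma<\xi<1$. Therefore
\[
\|W(t,\cdot)\|_{W^{1,p}}^q\le C^q\,\xi^{-kq}\gamma^{2kq/p}\quad\text{for }t\in[t_k,t_{k+1}),
\]
and integrating over $t$, noting $|[t_k,t_{k+1})|=\xi^k$, yields (for $q<\infty$)
\[
\|W\|_{L^q(0,\tfrac1{1-\xi};W^{1,p})}^q \le C^q\sum_{k=0}^\infty \xi^k\,\xi^{-kq}\gamma^{2kq/p} = C^q\sum_{k=0}^\infty\left(\xi^{1-q}\gamma^{2q/p}\right)^k,
\]
a geometric series which converges precisely when $\xi^{1-q}\gamma^{2q/p}<1$, i.e.\ $(q-1)\log\xi + (2q/p)\log\gamma<0$; dividing by $q\log\xi<0$ this is exactly the condition \re{propWregularity:eqPQ} after relabelling (here one should double-check the exponent bookkeeping, since the statement writes $p(1-1/q)<\log\gamma/\log\xi$, corresponding to the scaling exponent in $d=2$ normalised appropriately). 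The case $q=\infty$ is the requirement that $\xi^{-k}\gamma^{2k/p}$ stays bounded, i.e.\ $2\log\gamma/p\le\log\xi$, consistent with the $q\to\infty$ limit of \re{propWregularity:eqPQ}.

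The only real subtlety — and the step I would be most careful about — is the interface between consecutive scales. One must check that no extra contribution to $\nabla W$ appears at the times $t_k$ where $W_{k-1}$ and $W_k$ could both be active: this is where $\eta$ vanishes at the endpoints of its support, so in fact $W_{k-1}(t_k,\cdot)\equiv0$ and $W_k(t_k,\cdot)\equiv0$, making the decomposition genuinely non-overlapping off a measure-zero set of times, and the $L^q_t$ integral splits cleanly over the intervals $[t_k,t_{k+1})$. A secondary point is that $\nabla U(t,\cdot)$ is supported in the fixed compact set $\overline{R_\varepsilon\setminus(R_0\cap(H_+^\delta\cup H_-^\delta))}$ (Proposition \partref{propufacts}{.3}), so the $L^p\to L^\infty$ passage used above costs only a fixed constant, not one growing with $k$. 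With these two observations in place the geometric-series computation is routine and gives the stated range of $(p,q)$.
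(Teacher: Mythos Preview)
Your approach is essentially the same as the paper's: exploit the disjoint temporal supports of the $W_k$, read off the Lipschitz constants of each $W_k$ from those of $U$ via the rescaling, and then estimate $\|\nabla W(t,\cdot)\|_{L^p}$ on each interval $[t_k,t_{k+1})$ to reduce the $L^q_tW^{1,p}_x$ bound to a geometric series. Your treatment of the endpoints $t_k$ and of the compact support of $\nabla U$ is exactly what is needed.

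One remark on your exponent bookkeeping, which you already flagged: your change-of-variables in $\RR^2$ correctly produces the factor $\gamma^{2k/p}$, and this leads to the convergence condition $(p/2)(1-1/q)<\log\gamma/\log\xi$, which is \emph{stronger} than (i.e.\ is implied by) the stated condition \re{propWregularity:eqPQ}. The paper's own proof uses the cruder bound $\|\nabla W(t)\|_{L^p}\lesssim\xi^{-k}\gamma^{k/p}\delta^{-1}$ (effectively discarding one power of $\gamma^{k/p}$), which yields precisely \re{propWregularity:eqPQ}. So your argument in fact establishes a larger range of $(p,q)$ than the proposition claims; in particular it certainly proves the proposition as stated, and there is no error --- you are simply sharper than the paper here.
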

\begin{proof}
Note that the temporal supports of the summands in \re{eqW} are mutually disjoint, and $W_k$
is, for each $t$, Lipschitz with respect to $x$ with constant $\xi^{-k}L$, where $L$ is the space-time Lipschitz constant of $U$. Also for any $x$, $W_k(t,x)$ is Lipschitz with respect to $t$ with constant $\gamma^k\xi^{-2k}L<L$.
Hence, for any $T\in(0,\frac{1}{1-\xi})$ $U$ is  Lipschitz on $[0,T]\x\RR^2$. 

Since $U$ is bounded, there exists $C>0$ such that $\|W(\cdot,t)\|_{L^\infty}\leq C\left(\frac{\gamma}{\xi}\right)^k$ for 
\[t\in\left[\frac{1-\xi^{k-1}}{1-\xi},\frac{1-\xi^{k}}{1-\xi}\right].\] Combined with the Lipschitz properties on $[0,\frac{1}{1-\xi})$, this uniform convergence implies that $W$ is uniformly continuous in $[0,\frac{1}{1-\xi}]\x\RR^2$.   

For the Sobolev regularity, it suffices to estimate $\nabla W$, as $W\in L^\infty(0,\frac{1}{1-\xi}; L^\infty)$ and has compact support. Now by \re{eqULipConstX}, for $t\in[t_k,t_k+\xi^k]$ and $1\leq p<\infty$
\[
\|\nabla W(t)\|_{L^p}\lesssim\xi^{-k}\gamma^{k/p}\delta^{-1},
\] 
hence $W\in L^q(0,\frac{1}{1-\xi};W^{1,p})$ if
\[
\xi^{-1+1/q}\gamma^{1/p}<1.
\]
which is satisfied if \re{propWregularity:eqPQ} holds.
\end{proof}
\subsection{A non-uniqueness of full dimension}\label{secPassiveScalar2}

We adapt the construction above for a sequence $\alpha_n\to \frac{1}{\sqrt{2}}$, to exhibit  a divergence-free vectorfield flowing a two-dimensional set (of measure zero) into the origin in finite time. For the sake of clarity we will choose a number of explicit constants during the construction, these choices do not affect the strength of the result.

Theorem \ref{thmPassiveScalar2} is a consequence of the following theorem, with the applications of Lemmas  \ref{lemPassToWeak2D} and \ref{lemReversibility} to the restricted Hausdorff measure $\mathcal{H}^2_S$.

\begin{theorem}\label{thmFractal2}
There exists a uniformly continuous, weakly divergence-free velocity $\widetilde{V}:[0,T]\x\RR^2\to\RR^2$ for some $T>0$,  and a compact set $S\subset\RR^2$ with Hausdorff dimension 2 for which there exists a corresponding Lagrangian flow $X_{\widetilde{V}}$ satisfying
\[
\dim_HX_{\widetilde{V}}(t,S)=2\quad \forall t\in[0,T),\qquad X_{\widetilde{V}}(T,S)=\{0\}.
\]
For all $t\in[0,T]$, $\widetilde{V}(t,\cdot)$ is locally Lipschitz on $\RR^2\backslash\{(0,0)\}$.
\end{theorem}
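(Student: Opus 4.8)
The plan is to take a sequence $\alpha_n \nearrow 1/\sqrt{2}$ and, for each $n$, run the construction of Theorem \ref{thmFractal1} to obtain a velocity $W^{(n)} = W_{\alpha_n}$ collapsing the self-similar set $S_{\alpha_n}$ (of Hausdorff dimension $h_n = -\log 2/\log \alpha_n \nearrow 2$) to the origin in finite time. The idea is then to concatenate rescaled copies of these in space-time, in a telescoping fashion similar to \eqref{eqW}, so that on a first time interval $W^{(1)}$ carries a large copy of $S_{\alpha_1}$ into a small ball, on the next interval a rescaled $W^{(2)}$ carries a copy of $S_{\alpha_2}$ sitting inside that small ball into a still smaller ball, and so on, with all intervals accumulating at a finite time $T$ at which every trajectory has reached $0$. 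The target set $S$ will be defined so that $X_{\widetilde V}(t_n, S)$ is (a rescaled copy of) $S_{\alpha_n}$ at the $n$-th transition time $t_n$; since Hausdorff dimension is preserved by the bi-Lipschitz flow maps on each compact time subinterval, $\dim_H X_{\widetilde V}(t,S) = \sup_n h_n = 2$ for every $t < T$, while $X_{\widetilde V}(T,S) = \{0\}$.

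The key steps, in order, are: (1) Record the $\alpha$-dependence of the construction: by the $\alpha$-independent bound $\|U_\alpha\|_{L^\infty_{t,x}} \le C$ noted after Notation \ref{defnU}, the velocities $U_{\alpha_n}$ are uniformly bounded, though their Lipschitz constants (of order $\delta_{\alpha_n}^{-1} \to \infty$ as $\alpha_n \to 1/\sqrt 2$, since $\delta_{\alpha_n} \to 0$) blow up — this is exactly why $\widetilde V$ will only be locally Lipschitz away from the origin. (2) Choose spatial scaling factors $\gamma_n = \gamma_{\alpha_n}$ and a summable sequence of temporal lengths $\tau_n$ with $\sum \tau_n = T < \infty$, set $t_n = \sum_{j<n}\tau_j$, and define $\widetilde V$ on $[t_n, t_{n+1})$ to be the appropriately time-rescaled and space-rescaled copy of $W^{(n)}$ supported in a ball of radius comparable to $\prod_{j\le n}\gamma_j$ about the origin — with the scalings arranged (as in the proof of Proposition \ref{propWregularity}) so that the spatial Lipschitz constants stay controlled on $[0,T']$ for each $T'<T$ and the amplitudes $\prod_{j\le n}\gamma_j \cdot (\text{time rescaling})$ tend to $0$, giving uniform continuity up to $T$. (3) Use Lemma \ref{factXrescale} and the time-rescaling argument from \eqref{eqWk}--\eqref{eqW} to identify $X_{\widetilde V}$ on each interval with a rescaled $X_{W^{(n)}}$, and hence compute $X_{\widetilde V}(t_{n+1}, \cdot)$ inductively. (4) Define $S$ as the nested intersection/limit of preimages: $S := \bigcap_n X_{\widetilde V}(t_n,\cdot)^{-1}(\text{rescaled }S_{\alpha_n})$, check it is compact, nonempty, and that $X_{\widetilde V}(t,S)$ has a rescaled copy of $S_{\alpha_n}$ as a subset for all $t \le t_n$, so its dimension is at least $h_n$; then let $n\to\infty$. (5) Verify $\widetilde V(t,\cdot)$ is locally Lipschitz on $\RR^2 \setminus \{0\}$: away from the origin only finitely many summands are active near any compact set, and each is Lipschitz.

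The main obstacle I expect is step (2)/(4): arranging the nesting so that $S$ is genuinely a single well-defined set carried consistently through \emph{all} stages — i.e.\ that the copy of $S_{\alpha_{n+1}}$ used at stage $n+1$ actually sits inside the small ball to which stage $n$ has just delivered (a piece of) $S$, and that the geometry (the rectangles $R_{\varepsilon_{\alpha_n}}$, which also shrink as $\alpha_n \to 1/\sqrt2$) remains compatible across scales. One must choose the spatial rescaling at each stage small enough that $S_{\alpha_{n+1}}$ (rescaled) fits inside $\gamma_1\cdots\gamma_n\, R_{\varepsilon_{\alpha_n}}$, while simultaneously keeping the time-rescaling large enough that the resulting Lipschitz constants $\tau_n^{-1} \cdot \delta_{\alpha_n}^{-1} \cdot(\text{spatial factor})^{-1}$ do not destroy local-in-time regularity — a balancing of three competing sequences. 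A secondary technical point is confirming $\dim_H S = 2$ rather than merely $\ge h_n$ for each $n$: this follows from countable stability / monotonicity once one exhibits, inside $S$, bi-Lipschitz copies of $S_{\alpha_n}$ for all $n$, but care is needed that these copies are actually contained in $S$ and not just in its flow-image. Everything else — uniform continuity, weak divergence-freeness (each summand is $\nabla^\perp$ of something), and the flow computations — is routine given the machinery already developed for Theorem \ref{thmFractal1}.
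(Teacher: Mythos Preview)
Your sequential/nested scheme has a genuine gap in step (4), and it is not just bookkeeping. If $S=\bigcap_n X_{\widetilde V}(t_n,\cdot)^{-1}(c_nS_{\alpha_n})$, then in particular $S\subset X_{\widetilde V}(t_1,\cdot)^{-1}(c_1S_{\alpha_1})$, which (since the flow on $[0,t_1]$ is bi-Lipschitz) is a bi-Lipschitz image of $S_{\alpha_1}$. Hence $\dim_H S\le h_1<2$, the opposite of what you want. Your claim that $X_{\widetilde V}(t,S)$ \emph{contains} a rescaled $S_{\alpha_n}$ is reversed: the intersection gives containment the other way. More fundamentally, the flow $W^{(n)}$ (a rescaled concatenation of $U_{\alpha_n}$'s) maps $S_{\alpha_n}$ to a smaller copy of $S_{\alpha_n}$, never to $S_{\alpha_{n+1}}$; the fractals $S_{\alpha_n}$ do not sit inside one another, so there is no mechanism for a single set $S$ to satisfy $X_{\widetilde V}(t_n,S)\approx c_nS_{\alpha_n}$ for more than one $n$. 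The ``main obstacle'' you flagged is in fact fatal for this architecture.

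The paper resolves this by a \emph{spatial} rather than temporal decomposition. It fixes affine maps $G_k$ placing rotated, scaled copies $G_k(S_{\alpha_k})$ at pairwise disjoint locations along the positive $x_1$-axis, accumulating only at the origin, and sets $S=([0,24]\times\{0\})\cup\bigcup_{k\ge 1}G_k(S_{\alpha_k})$; then $\dim_H S=\sup_k h_k=2$ by countable stability of Hausdorff dimension. The velocity $V$ on $[0,2]$ has two pieces: on $[0,1]$, a sum $\sum_k (\tfrac{7}{8})^{k-1}\mathcal R V_k\circ G_k^{-1}$ of spatially disjoint, finite combinations of $U_{\alpha_k}$'s, each chosen so that $G_k(S_{\alpha_k})$ is contracted by exactly $7/8$; on $[1,2]$, an auxiliary Lipschitz field $\nu$ that translates every $G_k(R_0)$ inward by the same factor $7/8$ (this second ingredient, replacing your ``fit $S_{\alpha_{n+1}}$ inside the output of stage $n$'', is the piece your plan is missing). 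One obtains $X_V(2,S)=\tfrac{7}{8}S$, and then the final telescoping $\widetilde V(t,x)=\sum_k (\tfrac{63}{64})^k V\bigl((\tfrac{9}{8})^k(t-t_k),(\tfrac{8}{7})^k x\bigr)$ proceeds exactly as in \eqref{eqW}. The blow-up of Lipschitz constants you anticipated is confined to the spatial origin (since $\operatorname{supp}V_k\subset G_k(R_{1/2})$ and these accumulate only at $0$), giving the stated local Lipschitz property on $\RR^2\setminus\{0\}$.
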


\subsubsection{An auxiliary flow \texorpdfstring{$\nu$}{ν} }
The velocities $U_\alpha$ from the previous section are the key building block in the proof of Theorem \ref{thmFractal2}, but in order to flow $S$ to a single point, we additionally construct a velocity that advects certain affine copies of sets $S_{\alpha_n}$.

\begin{notation}
Let $\tilde\chi\in C^\infty(\RR^2;[0,1])$ be such that
\[
\supp (\tilde\chi)\subset H_+,\quad \tilde\chi\equiv 1 \mbox{ on } H^{3/4}_+,
\] having bounded first derivatives $\|D\tilde\chi\|_{L^\infty}\leq C$ and symmetry in the $x$-axis: $\tilde\chi(\sigma_1 x)=\tilde\chi(x)$.
\end{notation}

\begin{notation}
For $k\in\NN_0$ and $t\in[0,1]$, let 
\[\chi_k(t,x)\coloneqq (7/8)^{k}\tilde\chi\left(\left(8/7\right)^k [x-g_k(t)(1,0)]\right),
\]
where
\[
g_k(t)\coloneqq \left(\frac{7}{8}\right)^{k+1}\left[25-3t\right].
\]
\end{notation}
Observe that for $k\in\NN$
\begin{equation}\label{eqSuppChiFact1}\supp\chi_k(t,\cdot)\subset H_+^{g_k(t)},\end{equation} and, for $t\in(0,1]$,
\begin{equation}\label{eqSuppChiFact2}
g_{k+1}(0)<g_{k}(t)<g_{k}(0).
\end{equation}
\begin{proposition}\label{factChiSupp}
For all $t\in[0,1]$
\[
\supp(\nabla\chi_k(t,\cdot))\cap \supp(\nabla\chi_j(t,\cdot))=\emptyset
\]
if $j\neq k$.
\end{proposition}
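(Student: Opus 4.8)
The plan is to reduce the statement, for fixed $t\in[0,1]$, to a concrete disjointness of half-spaces, exactly as in the proofs of Corollary \ref{corHalfSpaceS} and Lemma \ref{factDisjointBorders}. First I would record that $\tilde\chi$ is supported in $H_+$ and identically $1$ on $H_+^{3/4}$, so $\nabla\tilde\chi$ is supported in the slab $\overline{H_+\setminus H_+^{3/4}}=\{x:0\le x_1\le 3/4\}$. Pulling this back through the affine map $x\mapsto (8/7)^k[x-g_k(t)(1,0)]$ that defines $\chi_k$, we get
\[
\supp(\nabla\chi_k(t,\cdot))\subset\left\{x:\ g_k(t)\le x_1\le g_k(t)+(7/8)^k\tfrac{3}{4}\right\}.
\]
So each $\supp(\nabla\chi_k(t,\cdot))$ lives in a vertical strip $\Sigma_k(t)$ whose left edge is at height $g_k(t)$ and whose width is $(7/8)^k\cdot 3/4=(7/8)^{k+1}\cdot\tfrac{6}{7}<(7/8)^{k+1}$.

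The heart of the argument is then the claim that these strips $\Sigma_k(t)$ are pairwise disjoint, which follows from a one-dimensional comparison of the relevant $x_1$-coordinates. Without loss of generality take $j<k$. The top edge of $\Sigma_k(t)$ sits at $x_1=g_k(t)+(7/8)^{k+1}\tfrac{6}{7}<g_k(0)$, using $(7/8)^{k+1}\tfrac{6}{7}<(7/8)^{k+1}\le(7/8)^{k+1}\cdot 3$ and the explicit form $g_k(t)=(7/8)^{k+1}(25-3t)$, $g_k(0)=(7/8)^{k+1}\cdot 25$; a clean way to see it is that the width $(7/8)^{k+1}\tfrac{6}{7}$ is far smaller than the gap $g_k(0)-g_k(t)=(7/8)^{k+1}\cdot 3t$ only fails when $t$ is small, so instead I would directly bound the top edge of $\Sigma_k(t)$ by $g_{k}(t)+(7/8)^{k+1}<g_{k-1}(0)$ and then invoke \re{eqSuppChiFact2}, which already gives $g_{k}(0)<g_{k-1}(t)$ for $t\in(0,1]$, hence a fortiori $g_k(t)+(7/8)^{k+1}\le g_{k-1}(t)$. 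Chaining this down from $k$ to $j+1$ shows the top of $\Sigma_k(t)$ lies strictly below the bottom $g_j(t)$ of $\Sigma_j(t)$, so $\Sigma_j(t)\cap\Sigma_k(t)=\emptyset$, and therefore $\supp(\nabla\chi_j(t,\cdot))\cap\supp(\nabla\chi_k(t,\cdot))=\emptyset$. The endpoint case $t=0$ needs a separate line: there $g_{k+1}(0)<g_k(0)$ with the same margins, so the strips are still nested-and-disjoint; the inequality \re{eqSuppChiFact2} is stated only for $t\in(0,1]$ but the $t=0$ comparison $g_{k+1}(0)<g_k(0)$ is immediate from $25<25\cdot 8/7$.

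I expect the only mildly delicate point to be pinning down the exact constants so that the width of each $\nabla\chi_k$-strip is genuinely dominated by the inter-strip gap uniformly in $t\in[0,1]$ — i.e. checking that $(7/8)^{k+1}\tfrac{6}{7}+g_k(t)\le g_{k-1}(t)$ for all such $t$, which amounts to the scalar inequality $\tfrac{6}{7}+\tfrac{7}{8}(25-3t)\le 25-3t$, equivalently $\tfrac{6}{7}\le\tfrac{1}{8}(25-3t)$, i.e. $25-3t\ge 48/7\approx 6.86$, which holds comfortably since $25-3t\ge 22$. Everything else is the routine pull-back-of-supports-through-an-affine-map bookkeeping already used repeatedly above, so the proof is short; indeed the paper flags it as elementary and I would present it as a two-paragraph computation along these lines.
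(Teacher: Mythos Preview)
Your approach is essentially the paper's: locate $\supp(\nabla\chi_k(t,\cdot))$ in the vertical strip $[g_k(t),\,g_k(t)+(7/8)^k\tfrac34]\times\RR$ and then check the strips are pairwise disjoint by a one-line comparison of $x_1$-coordinates. Your final direct check---verifying $g_k(t)+(7/8)^k\tfrac34\le g_{k-1}(t)$ for all $t\in[0,1]$, which reduces to $6\le 25-3t$---is exactly the right way to finish, and the discursive middle portion of your write-up (bounding by $g_k(0)$, invoking \eqref{eqSuppChiFact2}, treating $t=0$ separately) can simply be deleted in favour of that direct computation.

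Two small remarks. First, there is an arithmetic slip in your final reduction: the scalar inequality should read $\tfrac34+\tfrac78(25-3t)\le 25-3t$, equivalently $25-3t\ge 6$, not $\tfrac67+\tfrac78(25-3t)\le 25-3t$ (you appear to have divided different terms by different powers of $7/8$); either way the bound is satisfied with a large margin, so the conclusion is unaffected. Second, your choice to compare the strips at fixed $t$ is actually more robust than the paper's approach: the paper enlarges to the time-independent interval $[g_k(1),(7/8)^k(g_0(0)+\tfrac34)]$ and asserts this lies in $[g_k(1),g_{k-1}(1))$, but in fact $(7/8)^k(g_0(0)+\tfrac34)=(7/8)^k\cdot\tfrac{181}{8}>(7/8)^k\cdot 22=g_{k-1}(1)$, so those enlarged intervals overlap. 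The proposition itself is still true, and your fixed-$t$ argument proves it cleanly.
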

\begin{proof}
This follows from the fact that
 \begin{multline}\label{factChiSupp:eq1}
\supp(\nabla\chi_k(t,\cdot))\subset  [g_k(t),g_k(t)+(7/8)^k3/4]\x\RR\\
\subset [g_k(1),(7/8)^k(g_0(0)+3/4)]\x\RR,
\end{multline}
and the right-hand side is contained in $[g_k(1),g_{k-1}(1))\x\RR$ for $k\geq 1$.
\end{proof}
Similarly, we see that $\p_t\chi_k$ and $\p_t\chi_j$ have disjoint support if $j\neq k$. 

\begin{notation}
Let  $\Phi:\RR^2\to[0,1]$ be smooth in $(0,\infty)\x\RR$  such that
\[
\supp(\Phi)\subset \{x\,\colon x_1>|x_2|\}\cup\{(0,0)\},\ \Phi\equiv 1 \mbox{ on } [0,25]\x\RR \cap \{x\,\colon x_1>10|x_2|\},
\]
and $|D^\ell\Phi(x)|\lesssim |x|^{-\ell}$ for $\ell=1,2$.
\end{notation}

\begin{notation}
For $k\in\NN_0$ let $\nu_k\in C^\infty_c([0,1]\x\RR^2;\RR^2)$ be the divergence-free vectorfield given by
\[
\nu_k(t,x)\coloneqq\nabla^\perp(x_2\Phi(x)\chi_k(t,x)).
\]
Now define a vectorfield $\nu$ for $t\in[0,1]$ by
\begin{equation}\label{eqOmega}
\nu(t,x)\coloneqq\tilde\eta(t)\frac{3}{8}\sum_{k=0}^\infty \nu_k\left(\int_0^t\tilde \eta,x\right),
\end{equation}
where $\tilde\eta\in C^\infty_c((0,1);\RR)$ is a non-negative function such that $\int_0^1\tilde\eta=1$.
\end{notation}

\begin{proposition}
The sum in \re{eqOmega} converges uniformly and $\nu$ is uniformly Lipschitz in $[0,1]\x\RR^2$.
\end{proposition}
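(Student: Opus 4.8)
The statement asserts two things about the vectorfield $\nu$ defined in \re{eqOmega}: that the series converges uniformly, and that $\nu$ is uniformly Lipschitz on $[0,1]\times\RR^2$. The strategy is to reduce everything to uniform bounds on the individual summands $\nu_k$ together with the disjointness of the supports of their derivatives. First I would record the scaling structure: writing $\psi_k(t,x) = x_2\Phi(x)\chi_k(t,x)$ we have $\nu_k = \nabla^\perp\psi_k$, and from the definition $\chi_k(t,x) = (7/8)^k\tilde\chi((8/7)^k[x-g_k(t)(1,0)])$ one sees that $\psi_k$ is essentially an affine rescaled copy (by the factor $(7/8)^k$ in space, with matching translation $g_k(t)(1,0)$) of a fixed profile, modulated by the cutoff $\Phi$. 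Since $\Phi\equiv 1$ on the relevant region $[0,25]\times\RR\cap\{x_1>10|x_2|\}$ which contains $\supp\chi_k$ for every $k$ (because $g_k(t)\le g_0(0) = (7/8)\cdot 25 < 25$ and the opening angle of $\supp\tilde\chi$ after rescaling is controlled), the factor $\Phi$ acts trivially on $\supp\psi_k$, so on each support $\psi_k$ is genuinely self-similar. This gives $\|\nu_k\|_{L^\infty}\lesssim (7/8)^k\cdot(7/8)^{-k} = O(1)$... wait — more carefully, $\psi_k$ scales like $(7/8)^k\cdot(7/8)^k$ from the $x_2$ prefactor times $\tilde\chi$, and $\nabla^\perp$ costs one derivative $(8/7)^k$, so $\|\nu_k\|_{L^\infty}\lesssim (7/8)^k$. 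Hence $\sum_k\|\nu_k(t,\cdot)\|_{L^\infty}\lesssim\sum_k(7/8)^k<\infty$, giving uniform (indeed geometric, uniform in $t$) convergence of \re{eqOmega}; the prefactor $\tilde\eta(t)\frac38$ is bounded so causes no trouble.

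For the Lipschitz bound, the key point is Proposition \ref{factChiSupp} (and its analogue for $\p_t\chi_k$): the sets $\supp\nabla\chi_k(t,\cdot)$, hence $\supp\nabla\nu_k(t,\cdot)$ and $\supp\p_t\nu_k(t,\cdot)$, are pairwise disjoint. I would estimate $\|\nabla\nu_k(t,\cdot)\|_{L^\infty}$ and $\|\p_t\nu_k(t,\cdot)\|_{L^\infty}$: two spatial derivatives on $\psi_k$ yield $(7/8)^{2k}(8/7)^{2k} = O(1)$ in the scaling count for $\nabla^2\psi_k$, so $\|\nabla\nu_k\|_{L^\infty} = O(1)$ uniformly in $k$ and $t$; for the time derivative, $\p_t\chi_k$ picks up $g_k'(t) = -3(7/8)^{k+1}$ times one spatial derivative of $\tilde\chi$, so $\|\p_t\nu_k(t,\cdot)\|_{L^\infty}\lesssim (7/8)^k\cdot(8/7)^k\cdot(7/8)^k = O((7/8)^k)$, again uniformly bounded (in fact summably so). Because at each point $(t,x)$ at most one summand has nonvanishing derivative — by the disjointness — the gradient (resp.\ time derivative) of any partial sum is bounded by the single-summand bound, uniformly in the number of terms; passing to the uniform limit, $\nabla\nu$ and $\p_t\nu$ are bounded on $[0,1]\times\RR^2$, which gives the uniform Lipschitz property. (One also uses that $\nu$ is continuous and the partial sums converge uniformly with uniformly bounded Lipschitz constants, so the limit is Lipschitz with the same constant.)

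A few technical points need care. One must verify that the modulation by $\Phi$ does not spoil the scaling on the overlap region where $\Phi$ transitions from $1$ to $0$: this is where $\supp\tilde\chi$ after rescaling could poke outside $\{x_1>10|x_2|\}$. Here one uses that $\tilde\chi$ is supported in $H_+$ with bounded derivatives, that after the dilation by $(7/8)^k$ and translation by $g_k(t)(1,0)\approx (7/8)^{k+1}\cdot 25$ the support sits in a cone-like region near the positive $x_1$-axis at distance $\sim(7/8)^k$ from the origin, and that the bounds $|D^\ell\Phi|\lesssim|x|^{-\ell}$ exactly compensate the $(8/7)^{\ell k}$ from differentiating the rescaled $\tilde\chi$ — so even if $\Phi$ is nontrivial there, the product rule gives no worse than $O(1)$ for $\nabla^2(\Phi\chi_k)$ on that annular region. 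One should also confirm $\supp\chi_k\subset$ (region where $\Phi\equiv1$) for all $k$ by checking the geometry: $\supp\tilde\chi(\cdot)\subset H_+$ with the support near $g_k(t)(1,0)$ having width $(7/8)^k\cdot\frac34$ and height $O((7/8)^k)$ while the distance to the $x_2$-axis is $g_k(t)\gtrsim(7/8)^k$, so the ratio $x_1/|x_2|$ stays above $10$ provided the constants in $\tilde\chi$ and $g_k$ are as chosen (this is the role of the $25$ and the $3/4$). The main obstacle, such as it is, is bookkeeping the $\Phi$-modulation on the overlap region carefully enough to see the $|x|^{-\ell}$ decay of $\Phi$'s derivatives cancels the growth from rescaling $\tilde\chi$; once that is clean, the rest is the standard "disjoint supports plus uniform per-summand bounds implies uniform bound on the sum" argument.
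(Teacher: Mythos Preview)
There is a genuine gap. Your argument hinges on two claims that are both false here. First, you assert that $\supp\chi_k$ is contained in the region where $\Phi\equiv 1$, so that ``$\Phi$ acts trivially on $\supp\psi_k$'' and $\psi_k$ is a self-similar rescaling. But $\tilde\chi$ is supported in the half-space $H_+$ with no restriction in the $x_2$ variable, so $\supp\chi_k(t,\cdot)\subset H_+^{g_k(t)}$ is an unbounded strip in $x_2$; it is not contained in any cone $\{x_1>10|x_2|\}$. Consequently $x_2\Phi(x)\chi_k(t,x)$ is \emph{not} a rescaled copy of a fixed profile: the $x_2$ and $\Phi(x)$ factors are not rescaled with $\chi_k$. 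Your scaling bookkeeping for $\|\nu_k\|_{L^\infty}$ and $\|\nabla\nu_k\|_{L^\infty}$ therefore does not stand as written (even though the conclusion $\|\nu_k\|_{L^\infty}\lesssim(7/8)^k$ turns out to be true for other reasons).

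Second, and more seriously, the implication ``$\supp\nabla\chi_k$ disjoint $\Rightarrow$ $\supp\nabla\nu_k$ disjoint'' fails. Since $\nu_k=\nabla^\perp(x_2\Phi\,\chi_k)$, the product rule gives $\nabla\nu_k$ a term of the form $\bigl(\p_x\nabla^\perp[x_2\Phi]\bigr)\chi_k$, whose support is all of $\supp\chi_k$. The sets $\supp\chi_k$ are nested half-planes and overlap at every point, so infinitely many such terms contribute to $\nabla\sum_k\nu_k$ at once, and your ``at most one summand has nonvanishing derivative'' argument collapses. The paper deals with exactly this by splitting $\p_x\sum_k\nu_k$ into the overlapping piece $\sum_k(\p_x\nabla^\perp[x_2\Phi])\chi_k$ plus pieces supported in the disjoint sets $\supp\nabla\chi_j$. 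The overlapping piece is controlled by showing that at a point with $x_1\in[g_m(1),g_{m-1}(0)]$ one has $\sum_k|\chi_k(t,x)|\lesssim(7/8)^m$ while $|\p_x\nabla^\perp(x_2\Phi)|\lesssim|x|^{-1}\sim(8/7)^m$, so the product is $O(1)$; this is precisely where the decay hypothesis $|D^\ell\Phi|\lesssim|x|^{-\ell}$ is used. Your outline never engages with this summed term, which is the real content of the Lipschitz bound.
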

\begin{proof}
Uniform convergence follows from the estimate
\begin{multline*}
\|\nu_k\|_{L^\infty} \lesssim (\|\Phi\|_{L^\infty}+\||x|\nabla\Phi\|_{L^\infty(\supp(\chi_k))})\|\chi_k\|_{L^\infty} \\+ \|x\|_{L^\infty(\supp(\Phi)\cap\supp(\nabla\chi_k))}
\lesssim \left(\frac{7}{8}\right)^k,
\end{multline*} where we have used \re{factChiSupp:eq1}.

For the Lipschitz property, it follows from Proposition \ref{factChiSupp} that for any $\ell\in\NN$ and any first-order spatial derivative $\p_x$,
\begin{multline}\label{eqOmegaConv1}
\left|\p_x\sum_{k=0}^\ell\nu_k(t,x)\right| \leq \left|\sum_{k=0}^\ell\left(\p_x\nabla^\perp [x_2\Phi(x)]\right)\chi_k(t,x)\right| \\+ 2|\nabla[x_2\Phi(x)]||\nabla\chi_j(t,x)| + |x_2\Phi(x)||D^2\chi_j(t,x)| 
\end{multline}
for all $x$ and for some $j=j(x)\in 0,1,\ldots,\ell$.

 For $x\in [g_m(1),g_{m-1}(0)]\x\RR$ and $t\in[0,1]$, we use \re{eqSuppChiFact1} and \re{eqSuppChiFact2}
  to estimate the first term as follows:
\[
  \left|\sum_{k=0}^\ell(\p_x\nabla^\perp [x_2\Phi(x)])\chi_k\right|\lesssim|x|^{-1}\sum_{k=0}^\ell|\chi_k(t,x)|\lesssim \left(\frac{8}{7}\right)^m \sum_{k=m-1}^\infty \left(\frac{7}{8}\right)^k\leq C,
\]
for some $C>0$ independent of $m$, $\ell$ and $t$. Since $g_m(1)<g_m(0)$ for all $m\in\NN$ and $g_m(0)\to 0$ as $m\to\infty$, this term gives a uniformly bounded contribution to the derivative of \re{eqOmega} for all $x\in (0,\infty)\x\RR$.

The second term in \re{eqOmegaConv1}, is uniformly bounded in $[0,1]\x \RR^2$ since $|x||\nabla\Phi|$, $|\nabla\chi_j|$ and $|\Phi|$ are. For the last term we have,
\[
|x_2\Phi||D^2\chi_j|\lesssim g_0(0)+3/4
\]
since $\supp(\nabla\chi_j)\cap\supp(\Phi)\subset\{x\,\colon x_2\leq(7/8)^k(g_0(0)+3/4)\}$, by \re{factChiSupp:eq1} and the definition of $\Phi$.

A similar argument shows that $\p_t\sum_{k=1}^\ell\nu_k$ is uniformly bounded (with respect to $t\in[0,1]$, $x\in(0,\infty)\x\RR$, and $\ell\in\NN$).

It follows that the partial sum of \re{eqOmega} of order $\ell$ is Lipschitz in $[0,1]\x (0,\infty)\x\RR$, with constant independent of $\ell$.
By \re{eqSuppChiFact1}, the partial sums also have support contained in $[0,1]\x (0,\infty)\x\RR$, and are therefore also uniformly Lipschitz in $[0,1]\x\RR^2$. 
We conclude that $\nu$ is Lipschitz in $[0,1]\x \RR^2$ as the uniform limit of these partial sums.  
\end{proof}

By the choice of $\tilde\eta$, $\nu$ can be extended (by zero) to a Lipschitz function on $[0,\infty)\x \RR^2$. Moreover, $\nu$ is weakly divergence free, as the uniform limit of divergence-free functions.

The following property is the reason for constructing $\nu$.
\begin{proposition}\label{propOmegaTraj}
Let $Q\coloneqq [-1,1]\x[-\sqrt{2},\sqrt{2}]$, and let $X_\nu$ denote the trajectory map associated to $\nu$, then
\begin{equation}\label{propOmegaTraj:eq1}
X_\nu\left(1,(7/8)^k\left[x+(24,0)\right]\right)=(7/8)^{k}[x+(7/8)(24,0)]
\end{equation}
for all $k\in\NN_0$ and $x\in Q$.
\end{proposition}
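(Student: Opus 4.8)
The plan is to reduce the statement to the behaviour of a single copy of the auxiliary building block $\nu_k$ and then track the trajectory explicitly. First I would observe that, because of \re{eqOmega}, the trajectory map $X_\nu$ factors through the reparametrised time $s = \int_0^t \tilde\eta(r)\,\d r$, which runs monotonically from $0$ to $1$ as $t$ runs over $[0,1]$ (since $\int_0^1\tilde\eta = 1$ and $\tilde\eta\geq 0$). So it is equivalent to show that the trajectory map of the velocity $\tfrac38\sum_k \nu_k(s,\cdot)$, call it $Y(s,\cdot)$, satisfies $Y\bigl(1,(7/8)^k[x+(24,0)]\bigr) = (7/8)^k[x+(7/8)(24,0)]$ for $x\in Q$. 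The scaling structure $\chi_k(t,x) = (7/8)^k\tilde\chi((8/7)^k[x - g_k(t)(1,0)])$ and the homogeneity $|D^\ell\Phi(x)|\lesssim |x|^{-\ell}$ suggest that $\nu_k$ is (at least on the relevant region) a rescaled copy of $\nu_0$: concretely I would check that on the set where $\Phi\equiv 1$ — which by the definition of $\Phi$ contains the region $(8/7)^k$-scaled versions of $[0,25]\x\RR$ near the positive $x_1$-axis — we have $\nu_k(s,x) = (7/8)^k\,\nu_0\bigl(s,(8/7)^k x\bigr)$, up to the contribution of $\nabla\Phi$, which vanishes there. This is the place where the various explicit constants ($24 < 25$, the $10|x_2|$ cone, the $3/4$ in $\tilde\chi$) have been chosen precisely so that the relevant orbit segment stays inside $\{\Phi\equiv 1\}$.

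The second step is to compute the orbit for $k=0$. For $x\in Q$, the point $x+(24,0)$ lies in $[23,25]\x[-\sqrt2,\sqrt2]$, comfortably inside the region where $\Phi\equiv 1$ (it satisfies $x_1 > 10|x_2|$ for $x_1\geq 23 > 10\sqrt2$). On this region $\nu_0(s,\cdot) = \nabla^\perp(x_2\,\chi_0(s,x)) = \nabla^\perp\bigl(x_2\,\tilde\chi(x - g_0(s)(1,0))\bigr)$. Since $\tilde\chi \equiv 1$ on $H^{3/4}_+$ and $g_0(s) = (7/8)[25-3s]$ runs from $(7/8)\cdot 25$ down to $(7/8)\cdot 22$, one checks that the point $x+(24,0)$ stays in the region $\{x_1 \geq g_0(s) + 3/4\}$ where $\tilde\chi(\cdot - g_0(s)(1,0)) \equiv 1$ for the whole time interval, so there $\nabla^\perp(x_2 \cdot 1) = \nabla^\perp(x_2) = (-1,0)$ — wait, more carefully: I would recompute the sign, but the point is that on this region $x_2\tilde\chi(\cdot) = x_2$, and with the prefactor $\tfrac38$ (and reparametrised time) the velocity is a constant vector $\tfrac38(-1,0)$ (or whatever the sign works out to), so the trajectory is a straight horizontal translation. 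Integrating: $Y(1, x+(24,0)) = x+(24,0) - \tfrac38(1,0) = x + (24 - \tfrac38, 0)$. Since $24 - \tfrac38 = \tfrac{189}{8} = \tfrac{7}{8}\cdot 27$... let me instead match to the claim: we need $Y(1,x+(24,0)) = x + \tfrac78(24,0) = x + (21,0)$, i.e.\ a leftward shift by $3$. So the correct normalisation gives velocity $3\tilde\eta(t)\cdot\tfrac38\cdot\tfrac{1}{?}$; in any case the constants $\tfrac38$ and the factor $[25-3t]$ in $g_k$ are rigged so that the net displacement over $[0,1]$ is exactly $-3$ in the first coordinate. I would verify this bookkeeping directly from $r(t) := x+(24,0) + (\text{displacement})$ solving $\dot r = \nu(t,r)$, using that $\nu$ equals $-3\tilde\eta(t)(1,0)$ (times the relevant constant) along this orbit, and $\int_0^1\tilde\eta = 1$.

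The third step is to transfer this to general $k$ by the scaling relation: if $r(t)$ is the orbit through $x+(24,0)$ under $\nu_0$-dynamics, then $(7/8)^k r(t)$ is the orbit through $(7/8)^k[x+(24,0)]$ under $\nu_k$-dynamics, because $\nu_k(t,(7/8)^k y) = (7/8)^k\nu_0(t,y)$ on the relevant region and $Y$ preserves this scaling; crucially, by Proposition \ref{factChiSupp} the supports of $\nabla\chi_k$ are pairwise disjoint, and more importantly the orbit $(7/8)^k r(t)$ stays in the region where only the $k$-th summand $\nu_k$ is nonzero and equals its "$\Phi\equiv 1$" form, so the full velocity $\nu$ restricted to that orbit coincides with just the $k$-th term. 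Hence $X_\nu(1, (7/8)^k[x+(24,0)]) = (7/8)^k r(1) = (7/8)^k[x + \tfrac78(24,0)]$, which is exactly \re{propOmegaTraj:eq1}. The main obstacle — and the only non-routine part — is the first/third steps' geometric bookkeeping: verifying that for \emph{every} $x\in Q$ and every $s\in[0,1]$ the point $(7/8)^k r(s)$ remains simultaneously (i) inside $\{\Phi\equiv 1\}$, (ii) inside the slab where $\tilde\chi(\cdot - g_k(s)(1,0))\equiv 1$, and (iii) outside $\supp\nabla\chi_j$ for all $j\neq k$. This is where one must use the explicit inequalities $24\in(22,25)$, $\sqrt2 < 2.2$, and the cone condition $x_1 > 10|x_2|$, together with \re{eqSuppChiFact2}; once that containment is established, the velocity along the orbit is literally a time-dependent constant vector and the ODE is solved by a single integration.
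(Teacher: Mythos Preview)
Your overall strategy---reparametrise in time, verify that along the candidate orbit the velocity is a constant horizontal vector, and integrate---is exactly the paper's approach, and your instinct that the proof reduces to geometric bookkeeping is correct. However, there is a genuine error in your third step that also explains the arithmetic discrepancy you noticed in step two.

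You claim that the orbit $(7/8)^k r(t)$ lies in a region where \emph{only the $k$-th summand $\nu_k$ is nonzero}, invoking Proposition~\ref{factChiSupp}. But that proposition concerns the supports of $\nabla\chi_j$, not of $\chi_j$ itself. The cutoffs $\chi_j$ are supported on the nested half-planes $H_+^{g_j(t)}$ (see \eqref{eqSuppChiFact1}), so if the orbit lies to the right of $g_k(t)$ it automatically lies to the right of $g_j(t)$ for every $j\geq k$ as well. In fact the correct picture is: for $y$ on the level-$k$ orbit one has $y_1\leq g_{k-1}(t)$, so $\nu_j(t,y)=0$ for $j<k$, \emph{and} $y_1\geq g_j(t)+(7/8)^j\tfrac34$ for every $j\geq k$, so $\chi_j(t,y)=(7/8)^j$ is constant and hence $\nu_j(t,y)=-(7/8)^j(1,0)$ for all $j\geq k$. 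Summing the geometric series gives
\[
\nu(t,y)=\frac{3}{8}\sum_{j\geq k}\nu_j(t,y)=-\frac{3}{8}\cdot\frac{(7/8)^k}{1-7/8}(1,0)=-(7/8)^k(3,0),
\]
which is precisely the horizontal speed needed for the displacement $-3(7/8)^k$ over unit (reparametrised) time. This is why your $k=0$ computation produced a shift of $-3/8$ rather than $-3$: a single term $\nu_0$ contributes only $\tfrac38(-1,0)$, and the missing factor of $8$ comes from the tail of the series. Once you replace ``only $\nu_k$ contributes'' by ``all $\nu_j$ with $j\geq k$ contribute their constant value, and the $j<k$ terms vanish,'' the rest of your outline goes through and matches the paper's proof.
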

\begin{proof}
To simplify notation we may reparametrise in time, and assume that $\tilde\eta\equiv 1$. Indeed, consider the velocity $\hat\nu$ given by \re{eqOmega}, with $\eta\equiv 1$. If $\hat y(t)$ is a trajectory of $\hat\nu$, i.e.\ $\p_t \hat y=\hat\nu(t,y(t))$, then $y(t)=\hat y(\int_0^t\eta)$ is a trajectory of $\nu$ and $y(1)=\hat y(1)$. 

After this simplification, it is enough to check that  
\[
\nu(t,y)=-24(7/8)^k(1/8,0)=-(7/8)^k(3,0),
\] for 
\begin{equation}\label{propOmegaTraj:eq_y}y\in (7/8)^k[Q+24(1-t/8,0)].
\end{equation} Indeed, if this is the case, and if $y_0\in (7/8)^k[Q+(24,0)]$, then $y(t)=y_0-\frac{24 t}{8}(\frac{7}{8})^k(1,0)$ is a trajectory for $\nu$, i.e.\ $\p_t y=\nu(t,y(t))$, and $y(1)=y_0-\frac{24}{8}(\frac{7}{8})^k(1,0)$, which can be written in the form required in \re{propOmegaTraj:eq1}.

For $y$ as in \re{propOmegaTraj:eq_y},  we have 
\begin{multline*}
y_1\geq \left(\frac{7}{8}\right)^k\left[24(1-t/8)-1\right]=g_k(t)+ \left(\frac{7}{8}\right)^k \left(\frac{9}{8}-\frac{3}{8}t\right)\\
\geq g_k(t)+ \left(\frac{7}{8}\right)^k \frac{3}{4},
\end{multline*}
(recall that $g_k=(7/8)^{k+1}[25-3t]$) and
\begin{equation}\label{eqY1UpperBd}
y_1 \leq \left(\frac{7}{8}\right)^k\left[24(1-t/8)+1\right]=\left(\frac{7}{8}\right)^k\left[25-3t\right]= g_{k-1}(t).
\end{equation}
Moreover 
\[
|y_2|\leq \left(\frac{7}{8}\right)^k\sqrt{2} <(7/8)^k2=\frac{1}{10}\left[g_k(1)+ \left(\frac{7}{8}\right)^k \left(\frac{3}{4}\right)\right]\leq \frac{y_1}{10} .
\]
Since $\chi_j(t)\equiv (7/8)^j$ on $H_+^{g_j(t)+(7/8)^j(3/4)}$, and $\Phi\equiv 1$ on $\{x\,\colon x_1>10|x_2|\}$, we see that 
\[
\nu_j(t,y)=\left\{\begin{array}{ll}
0&j<k,\\
-(7/8)^j (1,0)& j\geq k.
\end{array}\right.
\]
To verify the case $j<k$, \re{eqY1UpperBd} implies that $y\notin \INT({H_+^{g_{k-1}(t)}})$ and $\supp\nu_j(t,\cdot)\subset H_+^{g_j(t)}$ so indeed, $\nu_j(t,y)=0$.
Hence (summing over $j$, and subject to reparametrising in time) $\nu(t,y)=-\left(\frac{7}{8}\right)^k(3,0)$, as required.
\end{proof}
\begin{corollary}\label{corLineTraj}
\[
X_\nu(1,[0,24]\x\{0\})=[0,24\cdot 7/8]\x\{0\}.
\]
\end{corollary}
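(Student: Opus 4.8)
The plan is to derive Corollary~\ref{corLineTraj} from Proposition~\ref{propOmegaTraj} by restricting attention to the segment $[0,24]\x\{0\}$ and observing that it is covered (up to endpoint overlaps that do not matter after taking a union) by the images of $Q+(24,0)$ under the dyadic scalings $(7/8)^k$. Concretely, first I would note that for each $k\in\NN_0$ the point $(24\cdot(7/8)^k,0)$ lies in $(7/8)^k[Q+(24,0)]$, since $(0,0)\in Q$; more importantly, the scaled copies $(7/8)^k[(-1,1)\x\{0\}+(24,0)]=[(23)(7/8)^k,(25)(7/8)^k]\x\{0\}$ have left endpoints $23(7/8)^k$ and right endpoints $25(7/8)^k$, and since $25(7/8)^{k+1}=(175/8)(7/8)^k>23(7/8)^k$, consecutive copies overlap, so their union over $k\in\NN_0$ together with the single limit point $(0,0)$ is exactly $[0,25\cdot 7/8]\x\{0\}\cap$ (the relevant range) — in particular it contains $[0,24]\x\{0\}$.

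Next I would apply Proposition~\ref{propOmegaTraj} with the points $x=(s,0)$ for $s\in[-1,1]$, which lie in $Q$. This gives
\[
X_\nu\bigl(1,(7/8)^k[(s,0)+(24,0)]\bigr)=(7/8)^k[(s,0)+(7/8)(24,0)]=(7/8)^k\bigl(s+21,\,0\bigr)
\]
for every $k\in\NN_0$ and $s\in[-1,1]$. As $s$ ranges over $[-1,1]$ and $k$ over $\NN_0$, the left-hand arguments sweep out $\bigcup_{k\ge 0}[23(7/8)^k,25(7/8)^k]\x\{0\}$, whose closure is $[0,25\cdot 7/8]\x\{0\}$ and which contains $[0,24]\x\{0\}$; the corresponding images sweep out $\bigcup_{k\ge 0}[20(7/8)^k,22(7/8)^k]\x\{0\}$, i.e. $[0,22\cdot 7/8]\x\{0\}$ after closure. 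The only subtlety is the endpoint $24$: it equals $24\cdot(7/8)^0$, which is in $[23,25]\x\{0\}$ (the $k=0$ copy) and maps to $(21,0)$; but $24=25\cdot(7/8)^1\cdot(192/175)$... rather, I should simply check directly that the preimage segment $[0,24]\x\{0\}$ under the union of these restricted maps has image exactly $[0,24\cdot 7/8]\x\{0\}$: for $k=0$ the subinterval $[23,24]\x\{0\}$ maps to $[20,21]\x\{0\}$, and for $k\ge 1$ the whole copy $[23(7/8)^k,25(7/8)^k]\x\{0\}\subset[0,24]\x\{0\}$ maps onto $[20(7/8)^k,22(7/8)^k]\x\{0\}$, and $22\cdot(7/8)=19.25<24\cdot(7/8)=21$, hmm — so I need to recheck the arithmetic so that the images genuinely fill $[0,21]\x\{0\}$.

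The main obstacle is therefore purely bookkeeping: getting the two geometric unions to line up so that $X_\nu(1,\cdot)$ maps $[0,24]\x\{0\}$ onto precisely $[0,24\cdot 7/8]\x\{0\}$ with no gaps and no overhang, using continuity of $X_\nu$ (which holds since $\nu$ is uniformly Lipschitz) to handle the accumulation point $(0,0)$ and the closures. I would organize this by writing $[0,24]\x\{0\}=\{(0,0)\}\cup\bigcup_{k\ge0}J_k$ where $J_k\coloneqq(7/8)^k\bigl([-1,1]+(24,0)\bigr)\x\{0\}$ intersected with $[0,24]\x\{0\}$, verifying the $J_k$ cover (they do, since the right endpoint of $J_{k+1}$ exceeds the left endpoint of $J_k$), applying the displayed identity on each $J_k$, and noting $X_\nu(1,(0,0))=(0,0)$ because $(0,0)\in\supp\Phi$ is a fixed point of the flow (the velocity vanishes there, as $\nu(t,\cdot)$ is supported in $\{x_1>|x_2|\}\cup\{(0,0)\}$ and is continuous with $\nu(t,(0,0))=0$). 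Taking the union of the images and its closure then yields $X_\nu(1,[0,24]\x\{0\})=[0,24\cdot 7/8]\x\{0\}$, which is the claim. I expect this to be a short paragraph-length argument once the index arithmetic is pinned down.
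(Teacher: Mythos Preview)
Your covering claim is false, and this is not mere bookkeeping. You assert that $25(7/8)^{k+1}>23(7/8)^k$, i.e.\ $25\cdot 7/8>23$; but $25\cdot 7/8=175/8=21.875<23$. Hence the intervals $[23(7/8)^k,25(7/8)^k]$ are pairwise \emph{disjoint}, and their union together with $\{0\}$ misses, for instance, all of $(21.875,23)$. Proposition~\ref{propOmegaTraj} gives you no information whatsoever about $X_\nu(1,\cdot)$ on these gaps, so the argument cannot be completed along these lines. (The same failure appears on the image side: the intervals $[20(7/8)^k,22(7/8)^k]$ leave the gap $(19.25,20)$, so even if you somehow knew the image was contained in $\RR\x\{0\}$, you could not fill $[0,21]$.)

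The paper proceeds quite differently, and much more simply. Since each $\nu_k=\nabla^\perp\bigl(x_2\Phi(x)\chi_k(t,x)\bigr)$, its second component is $\p_{x_1}(x_2\Phi\chi_k)$, which vanishes identically on $\{x_2=0\}$; thus $\nu(t,\cdot)$ is tangent to the $x$-axis and $X_\nu$ preserves $\RR\x\{0\}$. One then tracks only the two endpoints: $\nu(t,(0,0))=0$ gives $X_\nu(1,(0,0))=(0,0)$, and from the proof of Proposition~\ref{propOmegaTraj} one reads off $\nu(t,(24-3t,0))=(-3,0)$, so $X_\nu(1,(24,0))=(21,0)$. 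Because $\nu$ is Lipschitz, $X_\nu(1,\cdot)$ is a homeomorphism of $\RR^2$; restricted to the invariant line $\RR\x\{0\}$ it is therefore a strictly monotone continuous map, so the image of $[0,24]\x\{0\}$ is exactly the interval between the images of its endpoints, namely $[0,21]\x\{0\}$. This avoids any covering argument and uses Proposition~\ref{propOmegaTraj} only at the single point $(24,0)$.
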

\begin{proof}
By symmetry of $\chi_k$, $X_\nu$ preserves $\RR\x\{0\}$. The result follows from the fact that $\nu(0)=0$ and $\nu(t,(24-3t,0))=(-3,0)$, as above, so $X_\nu(1,(24,0))=(21,0)=(7/8)(24,0).$  
\end{proof}

\subsubsection{Combining \texorpdfstring{$\nu$}{ν} and \texorpdfstring{$U_\alpha$}{Uɑ}}

Continuing the proof of Theorem \ref{thmFractal2}, for a particular sequence $(\alpha_n)_{n=1}^\infty\subset(\frac{1}{2},\frac{1}{\sqrt{2}})$ with $\alpha_n\to\frac{1}{\sqrt{2}}$,  we combine rescaled and rotated copies of $U_{\alpha_n}$ and $\nu$ to construct a uniformly continuous divergence-free velocity field that is locally Lipschitz away from the origin and flows a two-dimensional set to the origin in finite time.

\begin{notation}
For $n\geq 1$, define
\[
\alpha_n\coloneqq 2\sqrt{2}\left(\left(\frac{7}{8}\right)^{1/(n+1)}-\frac{3}{4}\right).
\]
\end{notation}
The contraction ratio of $S_{\alpha_n}$ (defined by \re{eqSAlphaDefn}) under the flow $U_{\alpha_n}$ (defined in  \re{defnU:eq1}) is then \[\gamma_n \coloneqq \gamma_{\alpha_n}=(7/8)^{1/(n+1)}.\] 
Next define a collection of affine linear maps:
\[
G_n(x)=\left(\frac{7}{8}\right)^{n-1}\left[\frac{1}{\sqrt{2}}\mathcal{R}x + (24,0)\right].
\]
Then denote by $S$ the set
\[
S\coloneqq ([0,24]\x\{0\}) \cup \bigcup_{k=1}^\infty G_{k}( S_{\alpha_k}),
\]
(see Figure \ref{fig:S2D}). As the union of sets with dimensions 
\begin{equation}\label{eqLimDimSAlphaK}\max(1,-\log(2)/\log(\alpha_k))\to 2\end{equation} as $k\to\infty$, it follows that $\dim_H S=2$.

\begin{figure}
\centering
\includegraphics[width=0.7\textwidth]{\FiguresPath 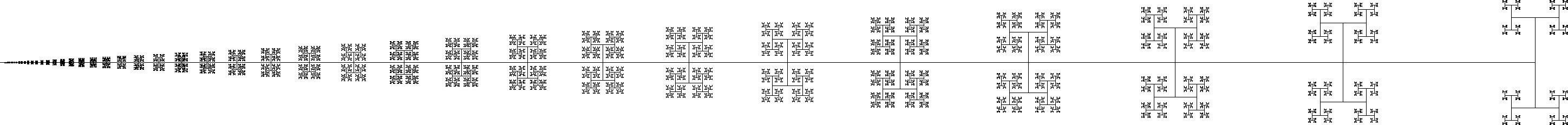}
\caption{$\dim_HS=2$}
\label{fig:S2D}
\end{figure}

We can now present the velocity that contracts $S$ by the factor $7/8$.
\begin{equation}\label{eqVDefn}
V(t,x)\coloneqq\left\{\begin{array}{ll}
0&t<0\\
\sum_{k=1}^\infty \left(\frac{7}{8}\right)^{k-1}\mathcal{R}V_k(t,G_k^{-1}(x))&t\in[0,1]
\\
\nu(t-1,x)& t\in[1,2] \\
0 &t>2
\end{array}
\right.
\end{equation}
where $V_k$ is the following finite sum of rescaled copies of $U_{\alpha_k}$ which induces a contraction of $S_{\alpha_k}$ by a factor $7/8$ in unit time, independent of $k$: 
\begin{equation}\label{eqVkDefn}
V_k(t,x)\coloneqq(k+1)\sum_{n=0}^k\gamma_k^nU_{\alpha_k}\left((k+1)t-n,\gamma_k^{-n}x\right).
\end{equation}

Since $X_{U_{\alpha_k}}(1,S_{\alpha_k})=\gamma_k S_{\alpha_k}$, we see that for $n=0,1,\ldots,k+1$
\begin{equation}\label{eqXVkContra}
X_{V_k}\left(\frac{n}{k+1},S_{\alpha_k}\right)=\gamma_k^n S_{\alpha_k}.
\end{equation}
In particular $X_{V_k}(1,S_{\alpha_k})=\gamma_k^{k+1}S_{\alpha_k}=\frac{7}{8}S_{\alpha_k}$. We also have, by symmetry of $U_{\alpha_k}$ that 
\[X_{V_k}\left(1,(\{0\}\x[0,\pm\infty)) \cap R_{1/2}\right)=(\{0\}\x[0,\pm\infty) )\cap R_{1/2},\]
so, using Lemma \ref{factXrescale},
\[
X_{\mathcal{R}V_k\circ G_k^{-1}}(1,G_k(R_{1/2})\cap\left([0,\infty)\x \{0\})\right) = G_k(R_{1/2})\cap([0,\infty)\x \{0\}).
\]

Considering the supports of the summands in  \re{eqVkDefn}, $\gamma_k^nG_k(\supp U_{\alpha_k})\subset G_k(\supp U_{\alpha_k})\subset G_k(R_{1/2})$, and
\begin{multline}\label{eqSuppVk}
G_k(R_{1/2})\subset G_k(R_{1/\sqrt{2}}) \subset\left\{x\,\colon 24-\frac{3}{2}\leq\left(\frac{8}{7}\right)^{k-1} x_1\leq24+\frac{3}{2}\right\}\\
\subset \left\{x\,\colon 24-\frac{3}{2}\leq\left(\frac{8}{7}\right)^{k-1} x_1<\frac{8}{7}\left(24-\frac{3}{2}\right)\right\}.
\end{multline}
Combining this with the the definition of $G_k$ yields: 
\[
G_k(R_{1/2})\subset\left\{x\,\colon|x_2|\leq\left(\frac{7}{8}\right)^{k-1}\frac{2+1/2}{\sqrt{2}}<\left(\frac{7}{8}\right)^{k-1}\frac{45}{20}\leq \frac{x_1}{10}\right\}.
\]
Since $\supp \nu(t,\cdot)\subset\{x\,\colon |x_2|\leq x_1\}$ we conclude that
\begin{equation}\label{eqSuppV1}
\supp V\subset \RR\x \{x\,\colon |x_2|\leq x_1\},
\end{equation}
we will use this fact later.

A particular consequence of \re{eqSuppVk} is that $G_j(R_{1/2})\cap G_k(R_{1/2})=\emptyset$ if $j\neq k$, so the terms in the sum  in \re{eqVDefn} have disjoint support in space. Therefore we have:
\begin{proposition}
$V(t)$ is weakly divergence free for all $t\in\RR$.
\end{proposition}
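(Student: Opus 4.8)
The plan is to verify weak divergence-freeness of $V(t,\cdot)$ by reducing to the already-established divergence-freeness of the building blocks and exploiting the disjointness of supports just recorded. Recall that for $t<0$ and $t>2$ we have $V\equiv 0$, which is trivially weakly divergence free. For $t\in[1,2]$ we have $V(t,\cdot)=\nu(t-1,\cdot)$, which is weakly divergence free by construction (it is the uniform limit of the divergence-free fields $\nu_k$, each of which is $\nabla^\perp$ of a scalar). So the only case requiring attention is $t\in[0,1]$, where
\[
V(t,x)=\sum_{k=1}^\infty\left(\tfrac{7}{8}\right)^{k-1}\mathcal{R}V_k(t,G_k^{-1}(x)).
\]

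First I would observe that each summand is individually weakly divergence free. Indeed, $V_k$ is a finite linear combination of rescaled copies $U_{\alpha_k}((k+1)t-n,\gamma_k^{-n}x)$ of $U_{\alpha_k}$, and $U_{\alpha_k}(t,\cdot)$ is weakly divergence free (as noted right after Notation \ref{defnU}, being the uniformly convergent sum of the divergence-free fields $\alpha^k\widetilde F_w^t u$, with $\nabla\cdot\widetilde F_w u=0$ by Proposition \partref{factDisjointSuppts}{.3}). A spatial rescaling $x\mapsto\gamma_k^{-n}x$ preserves weak divergence-freeness, and the map $v\mapsto\mathcal{R}v\circ G_k^{-1}$ also preserves it: since $G_k$ is an affine conformal map (rotation composed with dilation composed with translation), $\mathcal{R}v\circ G_k^{-1}$ is, up to the constant factor $(7/8)^{1-k}\sqrt 2$, of the form $A v(A^{-1}(x-c))$ with $A$ a scalar multiple of a rotation, and one checks directly that $\nabla\cdot(Av(A^{-1}(\cdot-c)))=(\nabla\cdot v)(A^{-1}(\cdot-c))$ in the distributional sense, which vanishes. (This is the same computation already used for $\widetilde F_w$ in Proposition \partref{factDisjointSuppts}{.3}.)

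Next I would combine the summands. The key point, recorded immediately before the statement, is that $G_j(R_{1/2})\cap G_k(R_{1/2})=\emptyset$ for $j\neq k$, and $\supp\big(\mathcal{R}V_k(t,G_k^{-1}(\cdot))\big)\subset G_k(\supp U_{\alpha_k})\subset G_k(R_{1/2})$ by \re{eqSuppVk}. Hence the summands have pairwise disjoint supports in space, and moreover these supports are locally finite (each point of $\RR^2$ has a neighbourhood meeting at most one $G_k(R_{1/2})$, since the $x_1$-ranges $[(7/8)^{k-1}(24-\tfrac32),(7/8)^{k-1}\tfrac87(24-\tfrac32))$ are disjoint and shrink to $0$). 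Since $V(t,\cdot)$ is continuous (it is uniformly continuous on $[0,\frac{1}{1-\xi}]\times\RR^2$-type estimates; more simply here, it is the locally uniform limit of the partial sums, each continuous, with supports marching to the origin), testing against $\nabla\varphi$ for $\varphi\in C^\infty_c(\RR^2)$ splits, by the disjointness and local finiteness of supports, into a sum $\sum_k\int(\tfrac78)^{k-1}\mathcal{R}V_k(t,G_k^{-1}(x))\cdot\nabla\varphi(x)\,dx$, each term of which vanishes by weak divergence-freeness of the corresponding summand. I would also note that only finitely many terms are nonzero for any fixed $\varphi$ of compact support \emph{not} containing the origin, and for $\varphi$ supported near the origin one uses that $V(t,\cdot)\to 0$ uniformly near $0$ together with the continuity of $V$ to justify passing the test-function pairing through the sum — no cancellation across summands is needed, each integral is already zero.

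The only mild obstacle is bookkeeping at the origin: the supports $G_k(R_{1/2})$ accumulate there, so one must make sure the interchange of the (infinite) sum with the integral against $\nabla\varphi$ is legitimate. This is handled by the uniform bound $\|V(t,\cdot)\|_{L^\infty(G_k(R_{1/2}))}\lesssim(7/8)^{k-1}\|U_{\alpha_k}\|_{L^\infty}\lesssim(7/8)^{k-1}$ (using the $\alpha$-independent bound $\|U_\alpha\|_{L^\infty_{t,x}}\le C$ established after Notation \ref{defnU}) times the area bound $|G_k(R_{1/2})|\lesssim(7/8)^{2(k-1)}$, so $\sum_k\|V(t,\cdot)\|_{L^1(G_k(R_{1/2}))}<\infty$, giving absolute convergence and thus dominated convergence for the interchange. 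With that, the weak divergence condition $\int V(t,x)\cdot\nabla\varphi(x)\,dx=0$ follows termwise, completing the proof for $t\in[0,1]$ and hence for all $t\in\RR$.
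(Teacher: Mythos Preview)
Your proof is correct and follows the same approach the paper sketches: each summand is weakly divergence free (via the affine-conformal pushforward of $U_{\alpha_k}$, exactly as in Proposition \partref{factDisjointSuppts}{.3}), the spatial supports are pairwise disjoint by \re{eqSuppVk}, and you additionally spell out the dominated-convergence justification for the interchange near the origin. One minor slip: $\|V_k(t,\cdot)\|_{L^\infty}\lesssim (k{+}1)\|U_{\alpha_k}\|_{L^\infty}$ rather than $\|U_{\alpha_k}\|_{L^\infty}$ (the factor $(k{+}1)$ from the time rescaling in \re{eqVkDefn} survives, since for each $t$ only one temporal summand is active), but $\sum_k (k{+}1)(7/8)^{3(k-1)}<\infty$, so your $L^1$ bound and the rest of the argument go through unchanged.
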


\begin{lemma}
There exists a unique trajectory map $X_V$ associated to the velocity $V$.
\end{lemma}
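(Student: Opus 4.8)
The plan is to establish existence and uniqueness of $X_V$ by analysing the three time intervals $[0,1]$, $[1,2]$, and the neighbourhood of the final time separately, and showing $V$ is locally Lipschitz away from the space-time point $(1,(0,0))$ (and identically zero outside $[0,2]\times\RR^2$). First I would treat the interval $[1,2]$: there $V(t,x)=\nu(t-1,x)$, which has already been shown to be uniformly Lipschitz on $[0,1]\times\RR^2$, so the classical Picard–Lindelöf theorem gives a unique, globally defined trajectory map on this slab. Next, on $[0,1]$, I would argue that $V(t,\cdot)=\sum_k (7/8)^{k-1}\mathcal{R}V_k(t,G_k^{-1}(\cdot))$ is locally Lipschitz on $\RR^2\setminus\{(0,0)\}$ with a constant that is uniform on any set bounded away from the origin: indeed by \re{eqSuppVk} the summands have pairwise disjoint supports $G_k(R_{1/2})$, each $V_k$ is a finite sum of rescaled copies of the uniformly Lipschitz field $U_{\alpha_k}$ (so $V_k$ is Lipschitz with a constant depending on $k$ through the rescaling factors $(k+1)$ and $\gamma_k^{-n}$), and the prefactor $(7/8)^{k-1}$ together with the rescaling by $G_k$ (which contracts by $(7/8)^{k-1}/\sqrt2$) must be tracked to see how the Lipschitz constant of the $k$-th term behaves; since the supports $G_k(R_{1/2})$ accumulate only at the origin, on any compact set $K\subset\RR^2\setminus\{(0,0)\}$ only finitely many terms are nonzero, so $V$ is Lipschitz on $[0,1]\times K$. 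This yields existence and uniqueness of trajectories that do not hit the origin.

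The key remaining point is to control trajectories near the origin during $[0,1]$. Here I would use the geometric structure already extracted: by \re{eqXVkContra} and the rescaling lemma (Lemma \ref{factXrescale}) we know $X_{\mathcal{R}V_k\circ G_k^{-1}}$ maps $G_k(S_{\alpha_k})$ into $\tfrac78 G_k(\cdots)$-type sets and, crucially, preserves the support rectangle $G_k(R_{1/2})$ — so a trajectory starting outside $\bigcup_k G_k(R_{1/2})$ stays put (the velocity vanishes there), while a trajectory starting inside one $G_k(R_{1/2})$ stays inside that same rectangle for all $t\in[0,1]$ because the velocity is tangent to / supported in it and the other summands vanish on it. Hence no trajectory started at a point $x\neq(0,0)$ can reach the origin in finite time within $[0,1]$: it is confined to a fixed rectangle $G_k(R_{1/2})$ bounded away from $0$. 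This confinement argument, together with the local Lipschitz bound, shows $X_V(t,a)$ exists and is unique for every $a\neq (0,0)$ and all $t\in[0,1]$; and the trajectory starting at $(0,0)$ is simply $X_V(t,(0,0))\equiv(0,0)$ since $V(t,(0,0))=0$ by \re{eqSuppV1} and continuity (the velocity vanishes at the origin for all $t$).

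Finally I would splice the three pieces together using the flow/semigroup property and continuity of $V$ on $[0,2]\times\RR^2$ (uniform continuity was or will be verified as for $W$ in Proposition \ref{propWregularity}): given $a\in\RR^2$, define $X_V$ on $[0,1]$ as above, then on $[1,2]$ solve the $\nu$-ODE with initial datum $X_V(1,a)$, and extend by the constant value for $t\ge 2$ and by $a$ for $t\le 0$. Uniqueness of the concatenation follows from uniqueness on each slab plus the fact that any solution on $[0,2]$ restricts to a solution on each subinterval. The main obstacle I anticipate is the bookkeeping in the interval $[0,1]$: one must verify carefully that the rescaling maps $G_k$ and the time-rescalings inside $V_k$ combine so that (i) the summands genuinely have disjoint spatial support for distinct $k$ (this is \re{eqSuppVk}), (ii) each summand's trajectory map preserves its own support rectangle, and (iii) the per-$k$ Lipschitz constants, though growing in $k$, do not matter because on any compact subset of $\RR^2\setminus\{0\}$ only finitely many are relevant — so that the only genuinely singular point is $(1,(0,0))$, consistent with the statement of Theorem \ref{thmFractal2}. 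No quantitative estimate near the origin is needed beyond the confinement property, which is why the argument goes through despite $V$ being merely uniformly continuous (not Lipschitz) at that point.
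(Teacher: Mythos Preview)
Your overall strategy matches the paper's: reduce to $t\in[0,1]$ (since $V=\nu(\cdot-1)$ is Lipschitz on $[1,2]$ and $V\equiv 0$ outside $[0,2]$), use that the summands of $V$ have pairwise disjoint compact supports $G_k(R_{1/2})$ on each of which $V$ is Lipschitz, and handle the origin separately. The forward-trapping observation --- a trajectory in $G_k(R_{1/2})$ stays there for all later times in $[0,1]$, because the $k$-th summand is supported inside $G_k(R_{1/2})$ and the others vanish there --- is correct and is precisely the mechanism the paper uses.

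However, there is a genuine gap at the origin. You assert that ``the trajectory starting at $(0,0)$ is simply $X_V(t,(0,0))\equiv(0,0)$ since $V(t,(0,0))=0$'', but this only exhibits \emph{one} solution; it does not prove uniqueness. Your preceding confinement argument is about trajectories issued from $a\neq 0$ and does not exclude a second trajectory starting at $0$ and escaping. In particular, the blanket claim that ``a trajectory starting outside $\bigcup_k G_k(R_{1/2})$ stays put (the velocity vanishes there)'' is fine for points possessing a neighbourhood disjoint from all the rectangles, but fails precisely at $0$, which is a limit point of the $G_k(R_{1/2})$. Since $0$ is the one point where $V(t,\cdot)$ is not locally Lipschitz on $[0,1]$, uniqueness there does require a separate argument.

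The paper closes this gap by combining your forward-trapping with a minimality argument. Suppose $Y(0)=0$ and $Y(t)\neq 0$ for some $t$. Then $V(s,Y(s))\neq 0$ for some $s\in(0,t]$, so $Y(t_0)\in G_k(R_{1/2})$ for some $k$ and some $t_0\in(0,t]$; forward trapping gives $Y(s)\in G_k(R_{1/2})$ for all $s\in[t_0,1]$. Take $t_0$ minimal with respect to $Y(t_0)\in G_k(R_{1/2})$. Since $Y(0)=0$ and $Y(t_0)\neq 0$, the same reasoning on $(0,t_0)$ produces $t_1\in(0,t_0)$ and, by minimality, some $j\neq k$ with $Y(s)\in G_j(R_{1/2})$ for all $s\in[t_1,1]$. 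But then $Y(t_0)\in G_k(R_{1/2})\cap G_j(R_{1/2})=\emptyset$, a contradiction. You already have all the ingredients for this; you just need to run the argument for the trajectory issued from the origin rather than asserting it. (Minor point: the non-Lipschitz locus of $V$ is $[0,1]\times\{(0,0)\}$, not the single space--time point $(1,(0,0))$.)
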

\begin{proof}
For $t\in[1,2]$,  $V(t)=\nu(t-1)$, is Lipschitz (with constant independent of $t$), and $V$ vanishes outside of $t\in[0,2]$. It therefore suffices to check that the trajectories for the flow $V$ are unique for $t\in[0,1]$.
 
Note that $V$ is continuous in space-time and by \re{eqULipConstT},\re{eqULipConstX}, the first derivatives of $V_{k}$ satisfy
\[
\|\nabla V_k\|_{L^\infty}\lesssim k\delta_{\alpha_k}^{-1}.
\]
Since $\delta_{\alpha_k}\to 0$, $V$ may not be uniformly Lipschitz for $t\in[0,1]$, however
\begin{equation}\label{eqSuppV}\supp V(t)\subset \{0\}\cup\bigcup_{k=1}^\infty G_k(R_{1/2}) ,
\end{equation}
and $V(t)$ is Lipschitz (with constant independent of $t$) on $G_k(R_{1/2})$ for $k\in\NN_0$. Indeed, the union in \re{eqSuppV} is a disjoint union of compact sets by \re{eqSuppVk}, and $V(t)\big|_{G_k(R_{1/2})}=V_k$. Hence $X_V$ is well defined, subject to checking that the origin admits a unique trajectory. 

Indeed, if $Y\in C^1([0,1];\RR^2)$ with 
\[
\frac{\d}{\d t}Y(t)=V(t,Y(t)),\quad Y(0)=0,
\]
 then if $Y(t)\neq 0$, there exists $t_0\in(0,t]$ such that $Y(t_0)\in \supp V_k(t_0)$ for some $k$. This would imply that $Y(s)\in G_k(R_{1/2})$ for $s\in[t_0,1]$, by uniqueness of trajectories in that domain. We may assume that $t_0$ is minimal with respect to the condition $Y(t_0)\in G_k(R_{1/2})$. Now by a similar argument, there is $t_1\in(0,t_0)$ and $j\neq k$ (by minimality) such that $Y(s)\in G_j(R_{1/2})$ for $s\in[t_1,1]$, but this is a contradiction, since it implies $Y(t_0)\in G_k(R_{1/2})\cap G_j(R_{1/2})=\emptyset$.
\end{proof}

\begin{proposition}\label{thmXVscale}
\[
X_{V}(2,S)=\frac{7}{8}S.
\]
\end{proposition}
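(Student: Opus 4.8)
\textbf{Proof plan for Proposition \ref{thmXVscale}.}

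The plan is to track the image $X_V(t,S)$ through the three relevant time regimes $[0,1]$, $[1,2]$, and to assemble the conclusion $X_V(2,S)=\tfrac78 S$ from the pieces already established. Recall $S=([0,24]\x\{0\})\cup\bigcup_{k\geq1}G_k(S_{\alpha_k})$. First I would handle $t\in[0,1]$: since the supports $G_k(R_{1/2})$ are pairwise disjoint (by \re{eqSuppVk}) and $V(t)\big|_{G_k(R_{1/2})}=\mathcal{R}V_k(t,G_k^{-1}(\cdot))$ there, the flow $X_V$ decouples across the blocks $G_k(R_{1/2})$ and fixes everything outside their union (in particular it fixes $[0,24]\x\{0\}$, since that segment lies outside $\bigcup_k G_k(R_{1/2})$ except possibly at isolated overlaps, which I would check do not matter because $V=0$ there on that line by the symmetry/support argument already used in Corollary \ref{corLineTraj}). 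On each block, by the rescaling identity Lemma \ref{factXrescale} together with \re{eqXVkContra}, we get
\[
X_V\!\left(1,G_k(S_{\alpha_k})\right)=G_k\!\left(X_{V_k}(1,S_{\alpha_k})\right)=G_k\!\left(\tfrac78 S_{\alpha_k}\right).
\]
Since $G_k(x)=(7/8)^{k-1}[\tfrac1{\sqrt2}\mathcal{R}x+(24,0)]$ is affine, $G_k(\tfrac78 S_{\alpha_k})$ is a scaled-and-translated copy of $S_{\alpha_k}$; I would record the explicit relation $G_k(\tfrac78 y)=\tfrac78\,G_{k}(y)+\text{(correction)}$ — more precisely it is cleaner to write it as $G_k(\tfrac78 S_{\alpha_k})=(7/8)^{k-1}[\tfrac{7}{8}\cdot\tfrac1{\sqrt2}\mathcal{R}S_{\alpha_k}+(24,0)]$, and note this equals the set that the second phase will act on.

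Next, for $t\in[1,2]$ we have $V(t,x)=\nu(t-1,x)$, so $X_V(2,\cdot)=X_\nu(1,X_V(1,\cdot))$. Here I would invoke Proposition \ref{propOmegaTraj}: it states precisely that $X_\nu(1,(7/8)^k[x+(24,0)])=(7/8)^k[x+(7/8)(24,0)]$ for $x\in Q=[-1,1]\x[-\sqrt2,\sqrt2]$, and $S_{\alpha_k}\subset R_0\subset$ a dilate of $Q$ — here one should be slightly careful that $\tfrac1{\sqrt2}\mathcal{R}S_{\alpha_k}$ together with the $\tfrac78$ contraction from phase one lands inside the set $Q$ (or the relevant dilate) on which Proposition \ref{propOmegaTraj} applies; this is where I expect the only real bookkeeping. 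Concretely, after phase one the block $G_k(\tfrac78 S_{\alpha_k})$ has the form $(7/8)^{k}[z+(7/8)^{-1}(24,0)]$ wait — let me instead just say: writing the output of phase one in the form $(7/8)^{k-1}[w+(24,0)]$ with $w$ ranging over $\tfrac78\cdot\tfrac1{\sqrt2}\mathcal{R}S_{\alpha_k}$, which lies in $Q$ because $\tfrac1{\sqrt2}R_0\subset Q$ and $\tfrac78<1$, Proposition \ref{propOmegaTraj} (with index $k-1$) gives
\[
X_\nu\!\left(1,(7/8)^{k-1}[w+(24,0)]\right)=(7/8)^{k-1}[w+\tfrac78(24,0)]=(7/8)^{k-1}\!\left[\tfrac78 w'+\tfrac78(24,0)\right]=\tfrac78\,(7/8)^{k-2}[w'+(24,0)],
\]
where $w=\tfrac78 w'$ with $w'\in\tfrac1{\sqrt2}\mathcal{R}S_{\alpha_k}$, so the image is exactly $\tfrac78\,G_k(S_{\alpha_k})$ wait — $(7/8)^{k-2}[w'+(24,0)]=G_{k-1}(\ldots)$ shape but with $S_{\alpha_k}$ not $S_{\alpha_{k-1}}$; the correct reading is that it equals $\tfrac78 G_k(S_{\alpha_k})$ directly, since $\tfrac78 G_k(y)=(7/8)^{k-1}[\tfrac78(\tfrac1{\sqrt2}\mathcal Ry)+\tfrac78(24,0)]$ — hmm, that has $\tfrac78$ on the translation too, matching. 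So $X_\nu(1,X_V(1,G_k(S_{\alpha_k})))=\tfrac78 G_k(S_{\alpha_k})$. Meanwhile for the segment, Corollary \ref{corLineTraj} gives $X_\nu(1,[0,24]\x\{0\})=[0,\tfrac78\cdot24]\x\{0\}=\tfrac78([0,24]\x\{0\})$, and this segment was fixed during phase one.

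Finally I would assemble: $X_V(2,S)=X_V(2,[0,24]\x\{0\})\cup\bigcup_k X_V(2,G_k(S_{\alpha_k}))=\tfrac78([0,24]\x\{0\})\cup\bigcup_k\tfrac78 G_k(S_{\alpha_k})=\tfrac78 S$, using that $X_V(2,\cdot)$ commutes with unions. The step I expect to be the genuine obstacle — really the only non-clerical point — is verifying the containment hypotheses needed to apply Proposition \ref{propOmegaTraj} to the \emph{post-phase-one} configuration: one must confirm that after the phase-one contraction each block $G_k(\tfrac78 S_{\alpha_k})$ still lies inside a set of the form $(7/8)^{j}[Q+(24,0)]$ for the appropriate index, so that $\nu$ sees it as a rigidly translating box, and that the segment $[0,24]\x\{0\}$ genuinely sits outside $\bigcup_k G_k(R_{1/2})$ so that it is untouched in phase one. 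Both are straightforward given $\tfrac1{\sqrt2}R_0\subset Q$, the explicit form of $G_k$, and the support computation \re{eqSuppVk} already in hand, but they are the facts that make the telescoping of the two contractions close up correctly.
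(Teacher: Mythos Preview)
Your overall two-phase strategy is exactly the paper's, and your handling of the blocks $G_k(S_{\alpha_k})$---contract by $\tfrac78$ in phase one via \eqref{eqXVkContra} and Lemma~\ref{factXrescale}, then translate by $\nu$ in phase two via Proposition~\ref{propOmegaTraj}---is correct. The bookkeeping you worry about is indeed trivial once you note, as the paper does, that $\tfrac{1}{\sqrt 2}\mathcal R R_0 = Q$, so $G_k(R_0)=(7/8)^{k-1}[Q+(24,0)]$ exactly; this makes the application of Proposition~\ref{propOmegaTraj} (with index $k-1$) immediate and avoids the algebraic meandering in your second paragraph.

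There is, however, a genuine error in your treatment of the segment $[0,24]\times\{0\}$ in phase one. You assert that it ``lies outside $\bigcup_k G_k(R_{1/2})$ except possibly at isolated overlaps,'' but this is false: by \eqref{eqSuppVk} the box $G_k(R_{1/2})$ has $x_1$-range roughly $(7/8)^{k-1}[22.5,25.5]$ and is centred on the $x_1$-axis, so the segment meets \emph{every} $G_k(R_{1/2})$ in a nontrivial interval. Hence $V$ is not zero on the segment during $[0,1]$, and Corollary~\ref{corLineTraj} (which concerns $\nu$, not phase one) is not the relevant fact. The correct reason the segment is preserved as a set is the symmetry statement recorded just before the proposition:
\[
X_{V_k}\bigl(1,(\{0\}\times[0,\pm\infty))\cap R_{1/2}\bigr)=(\{0\}\times[0,\pm\infty))\cap R_{1/2},
\]
which under $G_k$ becomes preservation of $G_k(R_{1/2})\cap([0,\infty)\times\{0\})$ by the $k$th summand of $V$. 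Combined with $V\equiv0$ off the boxes, this gives $X_V(1,[0,24]\times\{0\})=[0,24]\times\{0\}$. You should replace your support argument with this symmetry argument.
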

\begin{proof}
By \re{eqSuppVk}, the supports of the summands in \re{eqVDefn} are disjoint so \re{eqXVkContra} implies that
\[
X_V(1,S)=([0,24]\x\{0\})\cup\bigcup_{k=1}^\infty G_k\left(\frac{7}{8}S_{\alpha_k}\right).
\]

Using the notation of Proposition \ref{propOmegaTraj}, 
\[G_k(S_\alpha)\subset G_k(R_0)=(7/8)^{k-1}[Q+(24,0)],\] so combining that proposition with Corollary \ref{corLineTraj}, we conclude that
\[
X_{V}(2,S)=\frac{7}{8}S,
\] as required.
\end{proof}

As for the regularity of $V$, we have the following.
\begin{lemma}\label{factVunifCts}
$V$ is uniformly continuous in $\RR\x\RR^2$, and Lipschitz in $\RR\x H_+^\xi$ for any $\xi>0$. Moreover, $V$ is locally Lipschitz on $\RR\x(\RR^2\backslash\{(0,0)\})$.
\end{lemma}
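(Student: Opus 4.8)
\textbf{Proof proposal for Lemma \ref{factVunifCts}.}

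The plan is to treat the three time-regimes of $V$ in \re{eqVDefn} separately and patch them together. On $[1,2]$ we have $V(t,\cdot)=\nu(t-1,\cdot)$, which was already shown to be uniformly Lipschitz on $[0,1]\x\RR^2$; and $V\equiv 0$ for $t\notin[0,2]$, so the only genuinely delicate regime is $t\in[0,1]$, where the rescaling factors $(k+1)\delta_{\alpha_k}^{-1}$ in the bound $\|\nabla V_k\|_{L^\infty}\lesssim k\delta_{\alpha_k}^{-1}$ blow up as $k\to\infty$. The key structural fact, already recorded in \re{eqSuppV} and \re{eqSuppVk}, is that for $t\in[0,1]$ the support of $V(t,\cdot)$ is the \emph{disjoint} union $\{0\}\cup\bigcup_{k\geq1}G_k(R_{1/2})$, that $V(t)\big|_{G_k(R_{1/2})}=\mathcal{R}V_k(t,G_k^{-1}(\cdot))$, and crucially that $G_k(R_{1/2})\subset\{x:(8/7)^{k-1}x_1\geq 24-3/2\}$, i.e.\ each piece $G_k(R_{1/2})$ is contained in the half-space $H_+^{c_k}$ with $c_k=(7/8)^{k-1}(24-3/2)/(8/7)\to 0$ but bounded below by a positive constant on each finite range of $k$.

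First I would fix $\xi>0$ and show $V$ is Lipschitz on $\RR\x H_+^\xi$. Since $c_k\to 0$, only finitely many of the blocks $G_k(R_{1/2})$ meet $H_+^\xi$; on each such block $V(t)=\mathcal{R}V_k(t,G_k^{-1}(\cdot))$ is Lipschitz with constant $\lesssim (7/8)^{k-1}\cdot(8/7)^{k-1}\cdot k\,\delta_{\alpha_k}^{-1}$ in space (the two geometric factors from $G_k$ and its inverse cancelling up to the rotation) and a comparable constant in time by \re{eqULipConstT}; taking the max over the finitely many relevant $k$, together with the Lipschitz bound on $\nu$ for $t\in[1,2]$ and the vanishing elsewhere, gives a single Lipschitz constant on $\RR\x H_+^\xi$. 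Because the blocks are pairwise disjoint compact sets and $V$ vanishes in the gaps, there is no interaction across blocks, so the piecewise bound is a genuine Lipschitz bound. The local Lipschitz property on $\RR\x(\RR^2\backslash\{(0,0)\})$ follows immediately: any compact $K\subset\RR^2\backslash\{0\}$ has $\dist(K,0)>0$, hence $K\subset H_+^{\xi'}\cup(\RR^2\backslash\supp V(t))$ for a suitable $\xi'>0$ after also using \re{eqSuppV1} to discard the half where $V$ vanishes — more carefully, $K$ meets only finitely many blocks $G_k(R_{1/2})$ since $\mathrm{diam}\,G_k(R_{1/2})\to0$ and $G_k(R_{1/2})\to\{0\}$, so the same finite-max argument applies.

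For uniform continuity on all of $\RR\x\RR^2$ — where the point $(0,0)$ is now included — I would argue by uniform smallness near the origin. Given $\varepsilon>0$, choose $N$ so large that $\bigcup_{k>N}G_k(R_{1/2})$ is contained in a ball $B_r(0)$ with $r$ small, and so that $\|V_k\|_{L^\infty}\lesssim\|U_{\alpha_k}\|_{L^\infty}\lesssim 1$ uniformly in $k$ (the $\alpha$-independent bound $\|U_\alpha\|_{L^\infty}\leq C$ proved just before Section~2.4, rescaled, shows $\|V\|_{L^\infty}$ is bounded; note the spatial \emph{amplitude} of $V_k$ does not blow up, only its gradient). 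Then on $B_{2r}(0)$ the oscillation of $V$ is $\lesssim\|V\|_{L^\infty}$ restricted to the tail blocks, which I would arrange to be $<\varepsilon$ by a more quantitative estimate: on $G_k(R_{1/2})$ one has $|V(t,x)|\lesssim\|U_{\alpha_k}\|_{L^\infty}\lesssim 1$, and since these blocks shrink to $0$, $V(t,x)\to 0=V(t,0)$ as $x\to0$ uniformly in $t$; combined with $V(t,0)=0$ for all $t$ (the origin is in no block for $t\in[0,1]$ by \re{eqSuppVk}, and $\nu(t-1,0)=0$), this gives continuity at every $(t_0,(0,0))$, uniformly. Away from the origin, uniform continuity is inherited from the (local, hence on the compact support uniform) Lipschitz property just established together with uniform continuity of $\nu$; since $\supp V$ is bounded in space and $V$ is continuous and compactly supported in time, local uniform continuity upgrades to global uniform continuity on $\RR\x\RR^2$.

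The main obstacle is the blow-up of $\|\nabla V_k\|_{L^\infty}\sim k\,\delta_{\alpha_k}^{-1}$ as $k\to\infty$: this is precisely why $V$ fails to be globally Lipschitz and why the statement only claims Lipschitz regularity on half-spaces $H_+^\xi$ bounded away from the origin. The resolution, and the one subtle point to get right, is the geometric separation \re{eqSuppVk}: the supports $G_k(R_{1/2})$ are \emph{pairwise disjoint} and accumulate only at $\{0\}$, so the unbounded gradients live on a sequence of blocks retreating to the single bad point $(0,0)$, and on that point itself $V$ vanishes with controlled amplitude, salvaging uniform continuity there. Verifying that the disjointness in \re{eqSuppVk} really does prevent any cross-block contribution to difference quotients $|V(t,x)-V(t,y)|$ when $x,y$ lie in different blocks — handled by inserting a point in the $V=0$ gap between them and using the triangle inequality — is the routine-but-essential bookkeeping step.
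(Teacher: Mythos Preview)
Your approach is essentially the paper's: split into time regimes, use the disjoint-block structure \re{eqSuppVk} to reduce Lipschitz on $H_+^\xi$ to a finite maximum, and get uniform continuity from decay of $|V|$ near the origin combined with the Lipschitz bound away from it. The local-Lipschitz argument via \re{eqSuppV1} is also the paper's.

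There is, however, one slip in your uniform-continuity paragraph. You assert $\|V_k\|_{L^\infty}\lesssim\|U_{\alpha_k}\|_{L^\infty}\lesssim 1$, but the definition \re{eqVkDefn} carries a time-speedup factor $(k+1)$, so in fact $\|V_k\|_{L^\infty}\lesssim (k+1)\|U_{\alpha_k}\|_{L^\infty}\lesssim k+1$, which grows. Consequently your sentence ``since these blocks shrink to $0$, $V(t,x)\to 0$'' is a non sequitur as written: shrinking of the supports says nothing about the amplitude of $V$ on them. The correct bound is the one the paper uses: on $G_k(R_{1/2})$,
\[
|V(t,x)|\leq \left(\tfrac{7}{8}\right)^{k-1}\|V_k\|_{L^\infty}\ \lesssim\ (k+1)\left(\tfrac{7}{8}\right)^{k-1}\ \longrightarrow\ 0,
\]
so that for $x_1\leq (7/8)^{n-1}(24-\tfrac32)$ one has $|V(t,x)|\lesssim (n+2)(7/8)^n$, giving $V(t,x)\to 0$ uniformly as $x_1\to 0$. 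With this correction your argument goes through and matches the paper's.
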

\begin{proof}

Since $V(t,x)=\nu(t-1,x)$ for $t\in[1,2]$, and 
\[
0\equiv V_k(0,\cdot)\equiv V_k(1,\cdot)\equiv \nu(0,\cdot)\equiv \nu(1,\cdot)\equiv V(t,\cdot)
\] for all $k$, and $t\in\RR\backslash[0,2]$, it suffices to prove uniform continuity in $[0,1]\x\RR^2$. For $x=(x_1,x_2)$ with $x_1\geq (7/8)^{n-1}(24-\tfrac{3}{2})$, 
\[
V(t,x)=\sum_{k=1}^n \left(\frac{7}{8}\right)^{k-1}\mathcal{R}V_k(t,G_k^{-1}(x)),
\]
a finite sum of Lipschitz functions on $[0,1]\x\RR^2$. Whereas, for $x_1\leq (7/8)^{n-1}(24-\tfrac{3}{2})$, $|V(x,t)|\lesssim (n+2)(7/8)^{n}$.

Hence for any $\xi>0$, $V(t,x)$ is Lipschitz on $[0,1]\x H_+^\xi$ with constant depending on $\xi$, and $V(t,x)\to 0$ uniformly as $x_1\to 0$. Uniform continuity of $V$ follows.

Finally, by \re{eqSuppV1},  $\supp V\subset (\RR\x\bigcup_{n}H_+^{1/n}) \cup (\RR\x\{(0,0)\})$, so $V$ is locally Lipschitz on the required set.
\end{proof}

\subsubsection{Flowing \texorpdfstring{$S$}{S} to \texorpdfstring{$0$}{0}}
We can now prove Theorem \ref{thmFractal2}, from which Theorem \ref{thmPassiveScalar2} follows, as noted previously.
\begin{proof}[Proof (of Theorem \ref{thmFractal2})]
Using the velocity $V$ defined by \re{eqVDefn} we construct  a velocity field that flows $S$ to the origin in finite time. Recall that $\dim_{H}S=2$ by \re{eqLimDimSAlphaK}. We write
\[
\tilde V(t,x)\coloneqq\sum_{k=0}^\infty \left(\frac{63}{64}\right)^kV\left(\left(\frac{9}{8}\right)^k(t-t_k),\left(\frac{8}{7}\right)^kx\right),
\]
where $t_k\coloneqq18(1-(8/9)^k)=2\sum_{n=1}^k\left(\frac{8}{9}\right)^{n-1}$, for $k\in\NN$, and $t_0=0$. 

By Proposition \ref{thmXVscale}, 
\[
X_{\tilde V}(t_k,S)=\left(\frac{7}{8}\right)^kS
\]
and so $X_{\tilde V}(18,S)=\{0\}$.

Since $V$ is bounded and uniformly continuous, so is $\tilde V$ (which converges uniformly to $0$ as $t\to18$), by a similar argument to the proof of Lemma \ref{factVunifCts}. Furthermore, $\tilde V$ is Lipschitz on $\RR\x H_+^\xi$ for any $\xi>0$, and locally Lipschitz on $\RR\x(\RR^2\backslash\{(0,0)\})$ by Lemma \ref{factVunifCts}.

Since $V$ is weakly divergence free, so is $\tilde V$. 
\end{proof}

To complete this section, we prove the claim from the introduction regarding the Sobolev regularity of $V$.
\begin{proposition}\label{propVhatRegularity}
For all $p\in[1,\infty)$,
\[V\in L^\infty(0,2;W^{1,p}).\]
And $\tilde V\in L^q(0,18;W^{1,p})$ for $1\leq q\leq\infty$, $1\leq p<\infty$ with
\[
p(1-1/q)<\frac{\log(8/7)}{\log(9/8)}.
\]
\end{proposition}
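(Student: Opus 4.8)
The plan is to estimate $\nabla V$ and $\nabla \tilde V$ directly using the disjoint-support structure already established, mirroring the proof of Proposition \ref{propWregularity}.

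First I would handle $V$ on $[0,2]$. On $[1,2]$, $V(t,\cdot)=\nu(t-1,\cdot)$ is uniformly Lipschitz and compactly supported, so it lies in $W^{1,p}$ for every $p$ with a uniform-in-$t$ bound; this contributes nothing problematic. On $[0,1]$, by \re{eqVDefn} and \re{eqSuppVk} the summands $(7/8)^{k-1}\mathcal{R}V_k(t,G_k^{-1}(\cdot))$ have pairwise disjoint supports, and each is supported in $G_k(R_{1/2})$, which is a rescaled copy (by factor $(7/8)^{k-1}$) of a fixed box. Using \re{eqVkDefn}, \re{eqULipConstX} and the change of variables $x\mapsto G_k^{-1}(x)$ (a similarity with ratio $\sqrt{2}(8/7)^{k-1}$), one gets
\[
\|\nabla(\mathcal{R}V_k\circ G_k^{-1})(t)\|_{L^p}\lesssim (7/8)^{k-1}\,k\,\delta_{\alpha_k}^{-1}\,\big((7/8)^{k-1}\big)^{2/p},
\]
where the factor $k\delta_{\alpha_k}^{-1}$ comes from $\|\nabla V_k\|_{L^\infty}$ and the $G_k$-rescaling contributes $(7/8)^{k-1}$ to the gradient (Jacobian) and $((7/8)^{k-1})^{2/p}$ to the $L^p$ measure of the support, against a $(7/8)^{k-1}$ prefactor. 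Since the supports are disjoint, $\|\nabla V(t)\|_{L^p}^p = \sum_k \|\nabla(\cdots)_k(t)\|_{L^p}^p$, and because $(7/8)^{k}$ decays geometrically while $k\delta_{\alpha_k}^{-1}$ grows only polynomially in $k$ (note $\delta_{\alpha_k}=\alpha_k\varepsilon_{\alpha_k}$ and $\alpha_k\to 1/\sqrt2$, so $\delta_{\alpha_k}$ is bounded below by a positive constant — in fact $\delta_{\alpha_k}\to 1/4-1/4=0$? one must check: $\delta_{\alpha_k}=1/4-\alpha_k/(2\sqrt2)\to 1/4-1/4=0$, so $\delta_{\alpha_k}^{-1}$ does blow up, but only like $(n+1)$ since $\alpha_n\to 1/\sqrt2$ linearly in $1/(n+1)$), the series converges for every $p<\infty$, uniformly in $t$. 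Combined with $V\in L^\infty(0,2;L^\infty)$ and compact support, this gives $V\in L^\infty(0,2;W^{1,p})$.

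For $\tilde V$, the temporal supports of the summands $(\tfrac{63}{64})^kV((\tfrac98)^k(t-t_k),(\tfrac87)^kx)$ are disjoint (they live on successive intervals of length $2(8/9)^k$), so on the $k$-th interval $\tilde V$ equals a single rescaled copy of $V$. Rescaling space by $(8/7)^k$ and using $V\in L^\infty W^{1,p}$: the spatial gradient picks up a factor $(8/7)^k$, the prefactor is $(63/64)^k$, and the $L^p$-norm over the support scales by $((7/8)^k)^{1/p}$ (the support shrinks by $(7/8)^k$). Hence on the $k$-th time interval $\|\nabla\tilde V(t)\|_{L^p}\lesssim (63/64)^k(8/7)^k(7/8)^{k/p} = ((63/64)(8/7)(7/8)^{1/p})^k$. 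Then $\|\nabla\tilde V\|_{L^q(0,18;W^{1,p})}^q \lesssim \sum_k \big(\text{length }2(8/9)^k\big)\cdot\big((63/64)(8/7)(7/8)^{1/p}\big)^{kq}$, which converges iff $(8/9)\big((63/64)(8/7)\big)^q(7/8)^{q/p}<1$. Taking logarithms and simplifying (using $63/64=(7/8)(9/8)$, so $(63/64)(8/7)=9/8$, and $8/9=(9/8)^{-1}$) reduces this to $-\log(9/8)+q\log(9/8)-\tfrac{q}{p}\log(8/7)<0$, i.e.\ $(q-1)\log(9/8)<\tfrac{q}{p}\log(8/7)$, i.e.\ $p(1-1/q)<\log(8/7)/\log(9/8)$, exactly the stated condition. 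For $q=\infty$ the length factor is irrelevant and the condition becomes $p<\log(8/7)/\log(9/8)$, consistent with the formula.

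I expect the main obstacle to be bookkeeping the three competing scalings cleanly: the explicit prefactors $(7/8)^{k}$, the Jacobian factors from $G_k$ and from the space-time rescaling in $\tilde V$, and the $L^p$-measure-of-support factors, making sure the powers of $7/8$, $8/9$, $9/8$ combine to give precisely the claimed exponent rather than something off by a factor. A secondary point requiring care is confirming that $\delta_{\alpha_k}^{-1}$ (equivalently $\|\nabla V_k\|_{L^\infty}$) grows only polynomially in $k$, so that it is harmless against the geometric decay; this follows from $\alpha_n = 2\sqrt2((7/8)^{1/(n+1)}-3/4)$ giving $\delta_{\alpha_n}=\alpha_n\varepsilon_{\alpha_n}\asymp 1/(n+1)$. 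Everything else is a routine application of disjoint-support additivity of $L^p$ norms and change of variables.
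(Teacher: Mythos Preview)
Your approach is essentially identical to the paper's: estimate $\|\nabla V_k\|_{L^\infty}$ by $(k+1)\delta_{\alpha_k}^{-1}$, use $\delta_{\alpha_k}^{-1}=\mathcal{O}(k)$ together with the disjoint supports and geometric prefactors to get $V\in L^\infty(0,2;W^{1,p})$, and then run the rescaling argument of Proposition~\ref{propWregularity} for $\tilde V$; the paper in fact gives fewer details and simply refers back to that proposition. One bookkeeping slip to fix: the Jacobian of $G_k^{-1}$ scales like $(8/7)^{k-1}$, not $(7/8)^{k-1}$, so your displayed bound for $\|\nabla(\mathcal{R}V_k\circ G_k^{-1})\|_{L^p}$ is off by a factor $(8/7)^{2(k-1)}$ --- after multiplying by the $(7/8)^{k-1}$ prefactor the correct contribution is $\sim k^2(7/8)^{2(k-1)/p}$, which still sums for every finite $p$, so the conclusion is unaffected.
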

\begin{proof}
For $p\in[1,\infty)$ and $k\in\NN$,
\[
\|\nabla V_k\|_{L^\infty(0,1;L^p)}\leq (k+1)\|\nabla U_{\alpha_k}\|_{L^\infty_{x,t}}\lesssim(k+1)\delta_k^{-1}.
\]
Hence, by \re{eqVDefn} and the properties of $\nu$, $V\in L^\infty(0,2;W^{1,p})$. This relies upon the fact that by the choice of $\alpha$ $\delta_k^{-1}=\mathcal{O}(k)$, and the fact that $p<\infty$ to obtain $W^{1,p}$ convergence of \re{eqVDefn}.

The claimed regularity for $\tilde V$ follows from arguments similar to those in Proposition \ref{propWregularity}.
\end{proof}

\section{Active-scalar systems: Slits in \texorpdfstring{$\RR^2$}{R2}}\label{secActiveScalars2D}
Motivated by considerations from fluid mechanics, in particular problems like the  vortex filament conjecture, we next investigate velocities $u$ that are not only divergence free, but also satisfy an active scalar system.

In this section we prove Theorem \ref{thmSlit} (stated in the next subsection), from which Theorem \ref{thmActiveScalar2D} follows, by Lemmas \ref{lemPassToWeak2D}  and \ref{lemReversibility}. We now recall the main result of this section:

\begingroup
\def\thetheorem{\ref{thmActiveScalar2D}}
\begin{theorem}
There exists a kernel $K\in C(\RR^2;\RR^2)$ and $T>0$ such that there exists a time-dependent measure $\omega$ that is weakly advected by 
\begin{equation}
\def\theequation{\ref{eqUdefn_abstractOmega2D}}
u(t,x)\coloneqq \int_{\RR^2} K(x-y)\,\d\omega(t)(y),
\end{equation}
and $\dim_H\supp\omega(0)=0$, $\dim_H\supp\omega(T)=1$.
\end{theorem}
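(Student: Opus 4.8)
I would first isolate the Lagrangian version of the result, Theorem~\ref{thmSlit}, which I expect to assert the existence of a continuous, compactly supported kernel $K\in C(\RR^2;\RR^2)$, a time $T>0$, a compact set $S\subset\RR^2$ with $\dim_H S=1$ (a segment, possibly together with a countable family of shrinking copies of itself), and a velocity $u\in C([0,T]\x\RR^2;\RR^2)$ admitting a trajectory map $X_u$ on $S$, such that $u(t)=K\ast\big(X_u(t)\#\mathcal{H}^1_S\big)$ for every $t\in[0,T]$, and $\dim_H X_u(t,S)=1$ for $t<T$ while $X_u(T,S)=\{0\}$. Granting this, Theorem~\ref{thmActiveScalar2D} is immediate: by Lemma~\ref{lemPassToWeak2D} the push-forward $\omega(t):=X_u(t)\#\mathcal{H}^1_S$ is weakly advected by $u$, and $u=K\ast\omega$ by construction; Lemma~\ref{lemReversibility} then gives that $\tilde\omega(t):=\omega(T-t)$ is weakly advected by $\tilde u(t):=-u(T-t)$, and since $-u(T-t,x)=\int(-K)(x-y)\,\d\omega(T-t)(y)=\int(-K)(x-y)\,\d\tilde\omega(t)(y)$, the reversed system is of active-scalar type with the continuous kernel $-K$; finally $\dim_H\supp\tilde\omega(0)=\dim_H X_u(T,S)=0$ and $\dim_H\supp\tilde\omega(T)=\dim_H S=1$.

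\textbf{Strategy for the Lagrangian statement.} The only real work is the self-consistency $u=K\ast\omega$, and the guiding idea is to \emph{define} the velocity as such a convolution from the outset — so consistency holds by design — and then verify that the resulting flow collapses $S$. Write $K=\nabla^\perp g$ for a scalar kernel $g$, so that $u=K\ast\omega=\nabla^\perp(g\ast\omega)$ is automatically weakly divergence free. I would take $g$ odd in the second variable, so that $g$ vanishes on the $x_1$-axis and $K$ is tangent to it there; this keeps the segment-type support of $\omega$ on a fixed line, and lets one compute $u$ on that line as a one-dimensional integral of $K$ against the characteristic function of the current segment. The second ingredient is an approximate scale covariance: on the scale $|x|\sim\gamma^k$ the kernel should agree with $\gamma^{-k}$ times a fixed unit profile, up to a weight $\kappa_k$. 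Here $\gamma\in(\tfrac12,1)$ is a contraction ratio and $\xi\in(\sqrt\gamma,1)$ a companion ratio, chosen exactly so that (i) the velocities produced on successive scales telescope into a single bounded, uniformly continuous field, and (ii) the weights $\kappa_k$ force $g$, hence $K$, to vanish at the origin at a Hölder rate — this is precisely what reconciles the apparently singular self-similar scaling with the requirement that $K$ be \emph{continuous} and compactly supported (a single, non-self-similar collapse of one segment would instead demand a $\sim|x|^{-1/2}$-type profile for $K$, which is not continuous).

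\textbf{Assembling the cascade.} Mirroring the construction of $W$ in Section~\ref{secPassiveScalarH}: let $\tilde U$ be the unit velocity $\tilde U(\hat t,\cdot)=K_1\ast\big(X_{\tilde U}(\hat t)\#\mathcal{H}^1_S\big)$ associated to the unit part $K_1$ of the kernel, designed so that its flow carries $S$ onto $\gamma S$ in unit time while keeping the support on the axis; then set
\[
u(t,x)\coloneqq\sum_{k=0}^\infty\Big(\tfrac{\gamma}{\xi}\Big)^{k}\,\tilde U\Big(\tfrac{t-t_k}{\xi^k},\tfrac{x}{\gamma^k}\Big),\qquad t_k\coloneqq\sum_{j=0}^{k-1}\xi^{j},\qquad T\coloneqq\tfrac{1}{1-\xi},
\]
whose summands have pairwise disjoint temporal supports. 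A time-rescaling argument (as in Lemma~\ref{factXrescale}) gives $X_u(t_k,S)=\gamma^k S$, hence $X_u(t,S)\to\{0\}$ as $t\to T$ with $\dim_H X_u(t,S)=1$ for all $t<T$; uniform continuity and compact support of $u$, and the bound $u\in L^\infty(0,T;W^{1,1})$, follow from the single-scale estimates summed geometrically (this is Proposition~\ref{propSlitURegularity}). For the self-consistency, note that on the stage $[t_k,t_{k+1}]$ the measure $X_u(t)\#\mathcal{H}^1_S$ is the $\gamma^k$-dilation of $X_{\tilde U}(\hat t)\#\mathcal{H}^1_S$ together with the accompanying density factor $\gamma^{-k}$ coming from $(x\mapsto\gamma^kx)\#\mathcal{H}^1=\gamma^{-k}\mathcal{H}^1_{\gamma^k(\cdot)}$; the scale covariance of $K$ then turns $K\ast\big(X_u(t)\#\mathcal{H}^1_S\big)$ back into the rescaled copy $(\gamma/\xi)^k\,\tilde U(\hat t,\cdot)$, which is exactly $u(t)$.

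\textbf{Main obstacle.} The crux is the joint design of $K_1$ and the unit block $\tilde U$: the active-scalar identity $\tilde U(\hat t,\cdot)=K_1\ast\big(X_{\tilde U}(\hat t)\#\mathcal{H}^1_S\big)$ must hold for \emph{every} $\hat t\in[0,1]$, not just at stage boundaries — a compatibility constraint tying the prescribed velocity to the evolving support. One meets it by exploiting that the support stays a segment on a fixed line, so that the convolution restricted to that line is a one-dimensional integral of $K_1$ against the characteristic function of the segment, which can be matched to the desired velocity by prescribing the latter as an antiderivative in the segment's length. The genuine difficulty is to carry this out while simultaneously enforcing the scale covariance and the continuity and compact support of $K$ — the singular-looking $\gamma^{-k}$ weights against the required Hölder decay of $K$ at the origin — and it is exactly this balance that dictates the admissible window $\xi\in(\sqrt\gamma,1)$.
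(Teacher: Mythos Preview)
Your reduction paragraph is correct and matches the paper exactly: Theorem~\ref{thmActiveScalar2D} follows from the Lagrangian statement (Theorem~\ref{thmSlit}) via Lemmas~\ref{lemPassToWeak2D} and~\ref{lemReversibility}, and the reversed kernel $-K$ is still continuous.

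The Lagrangian construction, however, is built on a misconception. You assert that ``a single, non-self-similar collapse of one segment would instead demand a $\sim|x|^{-1/2}$-type profile for $K$'', and you justify this by writing the convolution on the axis as ``a one-dimensional integral of $K_1$ against the characteristic function of the segment''. But $\omega(t)=X_u(t)\#\mathcal{H}^1_I$ is \emph{not} the uniform measure on the current segment: it is the push-forward, so for $x$ on the axis
\[
u(t,x)=\int_{-1}^{1}K\big(x-X(t,(0,\beta))\big)\,\d\beta,
\]
an integral over the \emph{label} $\beta$, not over arclength on $X(t,I)$. As the segment shrinks the density of $\omega(t)$ blows up, and this concentration is exactly what allows a \emph{continuous} kernel to drive finite-time collapse. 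The paper simply takes $\kappa(x)=x_1x_2|x|^{-1/2}\chi(|x|)$ and $K=\nabla^\perp\kappa$, so that on the axis $K(x_1,0)=-\mathrm{sgn}(x_1)|x_1|^{1/2}(1,0)$, which is $C^{1/2}$. Existence of the one-dimensional flow $Y$ solving $\p_tY(t,\alpha)=-\int_{-1}^{1}\mathrm{sgn}(Y(t,\alpha)-Y(t,\beta))|Y(t,\alpha)-Y(t,\beta)|^{1/2}\,\d\beta$ is obtained by mollifying $K$, getting uniform monotonicity and Lipschitz bounds, and passing to the limit; the inequality $\p_tY(t,1)\le -\sqrt{Y(t,1)}$ then gives collapse by $t=2$. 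No cascade, no scale covariance, no companion ratio $\xi$.

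Your cascade proposal also has its own gap. At stage $k$ the support of $\omega(t)$ has diameter $\sim\gamma^k$, so the convolution $K\ast\omega(t)$ samples $K$ at \emph{all} scales $\le\gamma^k$, not only at the matching scale. For the identity $K\ast\omega(t)=(\gamma/\xi)^k\tilde U(\hat t,\cdot/\gamma^k)$ to hold you would need the contributions from every finer scale $\gamma^j$, $j>k$, either to vanish or to sum to exactly the right correction --- and this is not addressed. The ``joint design of $K_1$ and $\tilde U$'' you flag as the main obstacle is in fact an infinite family of compatibility conditions across scales, and it is not clear it can be met with a single time-independent continuous kernel.
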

\addtocounter{theorem}{-1}
\endgroup

\subsection{Lagrangian construction of the measure \texorpdfstring{$\omega$}{ω}}

Fixing a line segment $I=[-1,1]\x\{0\}$, we will consider an active scalar system in which the velocity is recovered by convoluting a continuous kernel $K\in C(\RR^2;\RR^2)$ with a pushforward of the 1-Hausdorff measure on $I$:
\begin{equation}\label{eqAS2weak}
u(t,x)=\int_I K(x-X(t,\alpha))\ \d \mathcal{H}^1_I(y).
\end{equation}
By virtue of Lemma \ref{lemPassToWeak2D}, if $X$ is also the trajectory map of $u$ given by \re{eqAS2weak} then $\omega(t)=X(t)\#\mathcal{H}^1_I$ is the weak solution of the measure-valued active scalar system \re{eqWeakAdvectionMeasure2D}. 
 In order that $u$ be weakly divergence free, it is enough that $K$ is given by a perpendicular gradient  \[K=\nabla^\perp\kappa=(-\p_2\kappa,\p_1\kappa),\] for some $\kappa\in C^1(\RR^2;\RR)$.
 
 \begin{definition}
 Given  $K\in C(\RR^2;\RR^2)$, we say $K$ \emph {admits a flow} (of $\omega_0$)
 \[X\in C([0,\infty)\x I;\RR^2),\quad\mathrm{differentiable\ with\ respect\ to\ } t\] if $u(t,x)$ given by \re{eqAS2weak} exists for all $x\in X(t,I)$ and $t\in[0,\infty)$, and $X$ is a trajectory map for $u$ on $I$, in the sense of Definition \ref{defTrajectoryMap}.
\end{definition}

The claimed construction can now be formalised in the following theorem.
\begin{theorem}\label{thmSlit}
There exists $\kappa\in C^{1,1/2}_c(\RR^2;\RR)$ such that $K=\nabla^\perp \kappa$ admits a flow $X$ with $X(0,x)=x$ for all $x\in I$, and $X(t,I)=\{0\}$ for all sufficiently large $t>0$.
\end{theorem}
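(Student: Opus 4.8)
The plan is to build $\kappa$ as a rescaled-and-stacked sum of a single fundamental profile, exactly mirroring the architecture used for the passive-scalar velocities $U_\alpha$ and $W$, but now with the crucial constraint that the velocity field it generates must arise as $\nabla^\perp\kappa$ convolved with the time-$t$ pushforward of $\mathcal{H}^1_I$. First I would design a fundamental kernel profile $\kappa_0\in C^{1,1/2}_c$ such that $\nabla^\perp\kappa_0$ convolved with $\mathcal{H}^1$ on a sub-segment produces, on a neighbourhood of that segment, a velocity that splits the segment into two half-length pieces and moves them apart (the active-scalar analogue of the ``two maps $F_1,F_2$'' picture), while being affine there so that the Lagrangian images stay line segments. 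The $C^{1,1/2}$ (rather than $C^\infty$) regularity is forced: convolving a $C^{0,1/2}$-type singular kernel against a one-dimensional measure gives a $C^{1,1/2}$ field, and this is the natural borderline — I would check that $K=\nabla^\perp\kappa$ with $\kappa\in C^{1,1/2}_c$ gives $u(t,\cdot)\in C^{1,1/2}$ uniformly, hence a unique trajectory map, as asserted (this is the content behind Proposition \ref{propSlitURegularity} referenced in the introduction).

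Next I would iterate: set $\kappa=\sum_k \lambda_k\,\kappa_0((x-c_k)/r_k)$ summed over the dyadic ``addresses'' $w\in\{1,2\}^\ast$, with scale factors $r_k$ and weights $\lambda_k$ chosen so that (i) the supports of $\nabla\kappa$ associated to distinct words are pairwise disjoint (the slit geometry must be arranged so the rescaled copies nest without overlap, just as $F_w(R_\varepsilon)$ did via Lemma \ref{factDisjointBorders}), and (ii) the velocity $u(t,x)=\int_I K(x-X(t,\alpha))\,\d\mathcal{H}^1_I$ induced by the current configuration of the slit reproduces, on each piece $X(t,F_w(I))$, precisely the affine contraction/translation needed. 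The self-consistency here is the new feature relative to Section \ref{secPassiveScalarH}: the velocity is not prescribed but determined by $\omega(t)$, so I must verify that the ansatz ``$X(t)$ maps each $F_w(I)$ to an affine copy of $I$'' is a fixed point — i.e. that when $\omega(t)$ is the pushforward of $\mathcal{H}^1_I$ under this family of affine maps, the convolution $K\ast\omega(t)$ restricted to the slit is exactly the time-derivative of those affine maps. Because $K=\nabla^\perp\kappa$ and the supports are disjoint, the contribution to $u$ at a point on the piece indexed by $w$ comes (up to controlled lower-order pieces) only from finitely many neighbouring words, so this reduces to a finite affine computation analogous to Lemma \ref{lemUonLines}.

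Then I would run the same time-telescoping trick as in the proof of Theorem \ref{thmFractal1}: compose rescaled time-slices $\kappa$ at geometrically shrinking spatial and temporal scales so that after finitely many stages the whole segment $I$ has been collapsed to a point, with the total velocity still uniformly continuous and compactly supported and still of the form $\nabla^\perp(\text{something }C^{1,1/2})$; uniform convergence of the sum together with disjoint supports of derivatives gives the claimed regularity, and the trajectory map $X$ is obtained stage by stage with $X(t,I)=\{0\}$ for $t$ large. Finally, a time reversal (Lemma \ref{lemReversibility}) plus Lemma \ref{lemPassToWeak2D} applied to $\mathcal{H}^1_I$ turns this into the statement of Theorem \ref{thmActiveScalar2D}.

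The main obstacle I anticipate is the self-consistency/fixed-point step: unlike in the passive case, I cannot simply \emph{declare} the velocity and then verify the flow: I must choose $\kappa_0$, the scales $r_k$, and the weights $\lambda_k$ so that the Biot--Savart-type convolution of $\nabla^\perp\kappa$ with the \emph{evolving} measure closes up exactly into the affine motions assumed. Getting the disjoint-support geometry and the affine matching to hold \emph{simultaneously} — and checking that the tails (contributions from far-away words, where $\kappa$ is not affine) are genuinely zero on the relevant pieces rather than merely small — is the delicate part, and is presumably why the construction needs an explicit, carefully chosen $\kappa_0$ rather than an abstract one.
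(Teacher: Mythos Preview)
Your plan goes in the wrong direction: you are importing the fractal, rescaled-and-stacked architecture of Section~\ref{secPassiveScalarH} into a problem where the paper's proof uses none of it. The paper takes a \emph{single} explicit potential
\[
\kappa(x)=x_1x_2|x|^{-1/2}\chi(|x|),
\]
observes that $K=\nabla^\perp\kappa$ satisfies $K(x_1,0)=-(x_1|x_1|^{-1/2},0)$ on $(-2,2)\times\{0\}$, and hence that the flow preserves the $x$-axis. The entire problem then collapses to the one-dimensional integro-differential equation
\[
\p_t Y(t,\alpha)=-\int_{-1}^{1}\frac{Y(t,\alpha)-Y(t,\beta)}{|Y(t,\alpha)-Y(t,\beta)|^{1/2}}\,\d\beta,\qquad Y(0,\alpha)=\alpha,
\]
for which existence is obtained by mollifying the kernel, using oddness and monotonicity to get uniform bounds, and passing to a limit by Arzel\`a--Ascoli. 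The collapse is then a two-line ODE comparison: by monotonicity of $Y(t,\cdot)$ and of the kernel, the right endpoint obeys $\p_t Y(t,1)\le -\sqrt{Y(t,1)}$, so $Y(t,1)=0$ for $t\ge 2$. No iteration, no disjoint-support geometry, no fixed-point matching.

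You have also misread the role of the $C^{1,1/2}$ regularity. It is not that convolution against a one-dimensional measure \emph{produces} a $C^{1,1/2}$ field; rather, $\kappa$ is deliberately chosen only $C^{1,1/2}$ so that $K\in C^{0,1/2}$ is \emph{sub}-Lipschitz at the origin. This is exactly what permits distinct trajectories to merge in finite time --- the mechanism behind the collapse. A smoother $\kappa$ would give a Lipschitz velocity on $I$ and forbid the phenomenon you are trying to produce. Your proposed architecture, with a smooth fundamental profile $\kappa_0$ and affine velocities on pieces, would run into precisely this obstruction at every stage; the ``self-consistency'' difficulty you flag is real, and the disjoint-support trick from Lemma~\ref{factDisjointBorders} does not transfer, because here the velocity is determined by convolving a \emph{fixed} kernel against an \emph{evolving} measure whose support is shrinking, not by summing prescribed velocity blocks.
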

\begin{proof}

Set
\[
\kappa(x)=x_1x_2|x|^{-1/2} \chi(|x|),
\]
where $\chi$ is a smooth cutoff function with $\chi(r)\equiv 1$ for $r\leq 2$ and $\chi(r)\equiv 0$ for $r\geq 3$. Note that $\kappa$ vanishes smoothly where it changes sign, except at $0$ where it is only $C^{1,1/2}$.

The corresponding kernel is 
\[
K(x)\coloneqq\nabla^\perp\kappa= \left(\begin{array}{c}-\frac{x_1}{|x|^{1/2}}+\frac{x_1x_2^2}{2|x|^{5/2}}\\
\frac{x_2}{|x|^{1/2}}-\frac{x_1^2x_2}{2|x|^{5/2}}
\end{array}\right)\chi(|x|)+x_1x_2|x|^{-3/2}\chi'(|x|)x^\perp.
\]
Observe that $K(x_1,0)=-(x_1|x_1|^{-1/2},0)$ for $x_1\in (-2,2)$. 

With the notation $J\coloneqq [-1,1]$, we now find a solution $Y\in C([0,\infty)\x J;\RR)$ of the integral equation
\begin{equation}\label{eqIntSys1}
Y(t,\alpha)=-\int_0^t \int_{J}\frac{Y(s,\alpha)-Y(s,\beta)}{|Y(s,\alpha)-Y(s,\beta)|^{1/2}} \ \d \beta\ \d s + \alpha,
\end{equation}
for all $\alpha\in J$.
We will later show that  $X(t,a)\coloneqq[Y(t,a_1),0]$ (for $a\in I$)  has the required properties.

We first check that a solution $Y$ to \re{eqIntSys1} exists. Let $M_\varepsilon\in C^\infty(\RR;\RR)$ be given by mollifying $K(x,0)$:
\begin{equation}\label{eqMEpsilonDefn}
M_\varepsilon(x_1)\coloneqq \rho_\varepsilon\ast K(\cdot,0)(x_1)
\end{equation}
where $\rho_\varepsilon(x_1)= \frac{1}{\varepsilon}\rho(x_1/\varepsilon)$ for an even mollifier $\rho\in C^\infty_c(\RR)$. Now consider the approximate system
\begin{equation}\label{eqIntSys2}
\frac{\d}{\d t}Z(t,\alpha)= \int_JM_\varepsilon(Z(t,\alpha)-Z(t,\beta)) \ \d \beta\ ,\qquad Z(0,\alpha)=\alpha.
\end{equation}
Since $\p_xM_\varepsilon,\ \p_x^2M_\varepsilon$ are bounded, the right-hand side of the ODE in \re{eqIntSys2} is Lipschitz in the norm ($\|\cdot\|_{W^{1,\infty}}$) for $Z(t, \cdot)\in C^1$. Hence there exists $T=T_\varepsilon>0$ and a unique solution $Y_\varepsilon\in C^1([0,T_\varepsilon]; C^1(J))$  to \re{eqIntSys2}. 

As $M_\varepsilon$ and $Y_\varepsilon(0,\cdot)$ are odd, $-Y_\varepsilon(t,-\alpha)$ is also a solution, so $Y_\varepsilon$ is odd by uniqueness. We deduce that $\int_JM_\varepsilon(-Y_\varepsilon(t,\beta)) \ \d \beta=0$. Now $M_\varepsilon$ is non-increasing, hence $\frac{\d}{\d t} Y_\varepsilon(t,\alpha)\leq 0$ for $\alpha$ such that $Y_\varepsilon(t,\alpha)\geq 0$ and vice-versa. Therefore $\|Y_\varepsilon(t) \|_{L^\infty}$ is bounded independent of $t$ and $\varepsilon$. 

We also have
\[
\p_t\p_\alpha Y_\varepsilon(t,\alpha)=\p_\alpha Y_\varepsilon(t,\alpha)\int_J \p_xM_\varepsilon(Y_\varepsilon(t,\alpha)-Y_\varepsilon(t,\beta))\ \d\beta,
\] 
which implies, given (negative) upper bounds on $\p_x M_\varepsilon$ in a suitable domain, that $\p_\alpha Y_\varepsilon(t,\alpha)\in(0,1]$ for all $(t,\alpha)\in[0,\infty)\x J$. We deduce that the solution may be continued indefinitely in $C^1$ for any $\varepsilon>0$. 

Given the above bounds on $\p_t Y_\varepsilon$ and $\p_\alpha Y_\varepsilon$, we see that $\{Y_\varepsilon\}_\varepsilon$ is equicontinuous in $(t,\alpha)$, hence we may assume, passing to a subsequence as necessary, that there exists $Y:[0,\infty)\x J\to\RR$ such that  $Y_\varepsilon\to Y$ uniformly as $\varepsilon\to 0$. Additionally, the limit is Lipschitz:
\[
Y\in C^{0,1}([0,\infty)\x J).
\]

Since $K(\cdot,0)$ is uniformly continuous, $M_\varepsilon$ converges uniformly to $K(\cdot,0)$, hence integrating \re{eqIntSys2} in time yields \re{eqIntSys1} for the limit $Y$. Directly from \re{eqIntSys1} we see that $Y$ is $C^1$ in time for every $\alpha\in J$.

Defining $X(t,\alpha)\coloneqq [Y(t,\alpha),0]$, uniform continuity and compact support of $K$ imply that $u$ defined by \re{eqAS2weak} is continuous. Combining \re{eqAS2weak} and \re{eqIntSys1} yields \re{eqDefnOfLagFlow}:
\[
u(X(t,\alpha))=\p_tX(t,\alpha).
\]

It remains to check that the image of $X$ collapses in finite time. For fixed $t\geq 0$, we have seen that $Y_\varepsilon$ is odd and non-decreasing with respect to $\alpha$ for every $\varepsilon>0$, hence so is $Y$. Similarly, $|Y|$ is non-increasing with respect to $t$ for any fixed $\alpha$. It therefore suffices to show that $Y(1,t)= 0$ for all sufficiently large times $t$. Now by monotonicity of $Y(t,\alpha)$ and $K(\alpha,0)$ (with respect to $\alpha$), \re{eqIntSys1} implies
\[
\p_tY(t,1)\leq -\sqrt{Y(t,1)}
\]
so (at least while $Y(t,1)>0$)
\[
Y(t,1)\leq (\sqrt{Y(0,1)}-t/2)^2,
\]
and in particular $Y(t,1)=0$ for $t\geq 2$.
\end{proof}

Now Theorem \ref{thmActiveScalar2D} follows by applying Lemma \ref{lemPassToWeak2D} to $\mu_0 = \mathcal{H}^1_I$ and Lemma \ref{lemReversibility}. Furthermore, since the support of the resulting measure is containted in a finite line segment (co-dimension 1) for all $t$ and $|\nabla K(x)|\lesssim |x|^{-1/2} $ the following proposition is straightforward.

\begin{proposition}\label{propSlitURegularity}
The velocity $u$ given by \re{eqAS2weak} for $K$ and $X$ as in Theorem \ref{thmSlit} satisfies
\[
u\in L^\infty(0,\infty;W^{1,p})
\]
for $p\in[1,2)$.
\end{proposition}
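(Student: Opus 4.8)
The statement splits into a uniform bound on $u(t,\cdot)$ in $L^p$ and a uniform bound on $\nabla u(t,\cdot)$ in $L^p$, the former being essentially immediate. Indeed, writing $u(t,\cdot)=K*\omega(t)$ with $\omega(t)\coloneqq X(t)\#\mathcal H^1_I$ (this is the content of \eqref{eqAS2weak} and is exactly the measure from the proof of Theorem~\ref{thmActiveScalar2D}), one observes that $\omega(t)$ is a nonnegative Borel measure with $\omega(t)(\RR^2)=\mathcal H^1(I)=2$ for every $t$ (pushforwards preserve total mass) and $\supp\omega(t)\subset I\subset B_1(0)$, while $K=\nabla^\perp\kappa$ is continuous, bounded, and supported in $B_3(0)$. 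Hence $u(t,\cdot)$ is uniformly bounded with $\supp u(t,\cdot)\subset B_4(0)$, so $\|u(t,\cdot)\|_{L^p}\le C_p$ uniformly in $t$ for every $p\in[1,\infty)$.

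For the gradient, the plan is to show $K\in W^{1,p}(\RR^2)$ for a range of $p$ comfortably containing $[1,2)$ and then invoke Young's inequality (equivalently Minkowski's integral inequality) for the convolution of an $L^p$ function with a finite measure. Away from the origin $K$ is $C^\infty$ with compact support; near $0$, where the cutoff $\chi$ equals $1$, the stream function $\kappa$ coincides with $x_1x_2|x|^{-1/2}$, which is positively homogeneous of degree $3/2$, so $K$ is homogeneous of degree $1/2$ there and its pointwise (a.e.) gradient satisfies $|\nabla K(x)|\lesssim|x|^{-1/2}$. Since $\int_{B_3}|x|^{-p/2}\,\d x<\infty$ precisely when $p<4$, we obtain $\nabla K\in L^p(\RR^2)$, hence $K\in W^{1,p}(\RR^2)$, for all $p\in[1,4)$, in particular for $p\in[1,2)$. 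Young's inequality then yields, for every $t$,
\[
\|\nabla u(t,\cdot)\|_{L^p}=\|(\nabla K)*\omega(t)\|_{L^p}\le\|\nabla K\|_{L^p}\,\omega(t)(\RR^2)=2\|\nabla K\|_{L^p},
\]
which is the required uniform-in-$t$ estimate, so $u\in L^\infty(0,\infty;W^{1,p})$ for $p\in[1,2)$.

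The one point that is not entirely mechanical is justifying that the distributional gradient of $u(t,\cdot)=K*\omega(t)$ is indeed $(\nabla K)*\omega(t)$: since $K$ is only $C^{0,1/2}$ and not Lipschitz, one cannot differentiate under the integral naively. I would handle this by mollification: with $K_\varepsilon\coloneqq\rho_\varepsilon*K\in C^\infty_c$ and $u_\varepsilon(t,\cdot)\coloneqq K_\varepsilon*\omega(t)$, one has $u_\varepsilon(t,\cdot)\to u(t,\cdot)$ locally uniformly by uniform continuity of $K$, and $\nabla u_\varepsilon(t,\cdot)=(\rho_\varepsilon*\nabla K)*\omega(t)\to(\nabla K)*\omega(t)$ in $L^1_{\loc}$, because $\rho_\varepsilon*\nabla K\to\nabla K$ in $L^1_{\loc}$ and convolution with the fixed, compactly supported, finite measure $\omega(t)$ is continuous on $L^1$ of a fixed bounded window; passing to the limit in $\int u_\varepsilon\,\nabla\phi=-\int(\nabla u_\varepsilon)\phi$ identifies the weak derivative. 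One should also record that $t\mapsto u(t,\cdot)\in W^{1,p}$ is measurable, which follows from the joint continuity $X\in C([0,\infty)\x I;\RR^2)$ established in the proof of Theorem~\ref{thmSlit}. I expect this mollification bookkeeping, rather than any estimate, to be the only real obstacle, and it is standard; this is why the proposition can fairly be called straightforward.
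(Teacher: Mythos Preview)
Your argument is correct and essentially matches the paper's own justification, which is only the one-line remark preceding the proposition: the measure has uniformly bounded total mass and compact support, and $|\nabla K(x)|\lesssim|x|^{-1/2}$, so the result is ``straightforward.'' Your use of Young's inequality for convolution with a finite measure is exactly the natural way to make that remark precise; the mollification step you include to identify the weak gradient is appropriate bookkeeping.

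One small observation: you in fact obtain $u\in L^\infty(0,\infty;W^{1,p})$ for all $p\in[1,4)$, since $|x|^{-p/2}\in L^1_{\loc}(\RR^2)$ for $p<4$; the paper's stated range $p\in[1,2)$ is weaker than what your argument yields. Note also that the paper's emphasis on the support being ``co-dimension~1'' is not actually used in your proof, and indeed is not needed once one appeals to Young's inequality with the total mass bound; your approach is slightly cleaner in that respect.
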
 

\section{Active-scalar systems: Ribbons in \texorpdfstring{$\RR^3$}{R3}}\label{secActiveScalars3D}
In this section we adapt the 2D example in the previous section to prove Theorem \ref{thmTimeReversal}. That is, we present an active scalar system (with vortex-stretching) that admits a divergence-free weak measure-valued solution and where the support of the measure is initially a line segment but has Hausdorff dimension $2$ for sufficiently large positive times.

As before, we begin with a Lagrangian construction of a  system in which the image of a two-dimensional set collapses in finite time, then pass to a weak formulation and which is reversible in time.
\subsection{Lagrangian construction}
Let $\Xi\coloneqq \{0\}\x[-1,1]\x[-1,1]$ and fix the vector-valued measure $\omega_0$ given by
\[
\omega_0\coloneqq (0,1,0) \mathcal{H}^2_\Xi,
\]
(recall that  $\mathcal{H}^2_\Xi$ denotes the Hausdorff measure on $\Xi$). Recall also that in the vector-valued case we are concerned  with  the transport equations with stretching:
\begin{equation}\label{eqAS13D}
\p_t \omega +(u\cdot\nabla)\omega=(\omega\cdot\nabla)u.
\end{equation}
In Lagrangian coordinates this system is equivalent to
\begin{equation}\label{eqAS3D_Lag}
\omega(X_u(t,\alpha))=\nabla_\alpha X_u(t,\alpha)\omega_0(\alpha).
\end{equation}
 
Hence we will consider a velocity $u$ recovered from a flow $X$ via
\begin{equation}\label{eqAS2Weak3D}
u(t,x)=\int_\Xi K(x-X(t,\alpha))\p_{\alpha_2} X(t,\alpha)\ \d \alpha,
\end{equation}
for some continuous matrix-valued kernel $K$.
\begin{definition}
A kernel $K\in C(\RR^3;\RR^{3\x 3})$ \emph{admits a flow} $X\in C([0,\infty)\x\Xi;\RR^3)$ of $\omega_0$ if $X$ is differentiable with respect to $t$ and $\alpha_2$,  the corresponding $u$, defined by \re{eqAS2Weak3D} exists on $\{(t,x)\,\colon t\in[0,\infty),\, x\in X(t,\Xi)\}$, and 
\begin{equation}\label{eqASLag3D}
\p_t X(t,a)=u(t,X(t,a))\quad X(0,a)=a
\end{equation}
for all $(t,a)\in[0,\infty)\x\Xi$, i.e.\ $X=X_u$ is a trajectory map for $u$.
\end{definition}

The main construction of this section is contained in the following Theorem.
\begin{theorem}\label{thmRibbon}
There exists a kernel $K\in C(\RR^3;\RR^{3\x3})$ admitting a flow $X$ of $\omega_0$ such that $X(t,\Xi)\subset\{0\}\x[-1,1]\x\{0\}$ for sufficiently large times $t>0$.
\end{theorem}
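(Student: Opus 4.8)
\textbf{Proof proposal for Theorem \ref{thmRibbon}.}

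The plan is to mirror the two-dimensional construction of Theorem \ref{thmSlit}, exploiting the product structure $\Xi = \{0\}\x[-1,1]\x[-1,1]$ so that the flow acts only in the $x_2$-variable, leaving the $x_3$-slice passive. Concretely, I would look for a kernel $K$ and a flow of the form $X(t,(0,a_2,a_3)) = (0,Y(t,a_2),a_3)$ for a scalar function $Y:[0,\infty)\x[-1,1]\to\RR$, exactly the $Y$ produced in the proof of Theorem \ref{thmSlit} (solving the integral equation \re{eqIntSys1}). Then $\p_{\alpha_2}X(t,\alpha) = (0,\p_{a_2}Y(t,a_2),0)$, so \re{eqAS2Weak3D} becomes $u(t,x)=\int_\Xi K(x-X(t,\alpha))\,(0,\p_{a_2}Y,0)\,\d\alpha$, i.e.\ only the middle column of the matrix $K$ is ever used on the support of $\omega$. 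The first step is therefore to \emph{choose $K$} so that this middle column, evaluated on the relevant set $\{(0,\cdot,\cdot)\}$, reproduces the scalar kernel from Theorem \ref{thmSlit}: take the middle column to be (a cutoff of) $\nabla^\perp$-type in the $(x_1,x_2)$ variables times a factor in $x_3$, arranged so that $K\cdot(0,1,0)$ restricted to the plane $x_1=0$ agrees with $(K_{\mathrm{2D}}(x_1,x_2),0)$ — in particular $(K\cdot(0,1,0))(0,x_2,x_3) = -(0,x_2|x_2|^{-1/2}\chi,0)\cdot(\text{cutoff in }x_3)$, independent of $x_3$ on $[-1,1]$. The other columns of $K$ may be taken to be zero (or any continuous completion); only continuity and the behaviour of the middle column matter.

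The second step is to \emph{verify that this $X$ is a trajectory map for the resulting $u$}. Since $X$ moves no $\alpha_3$-dependence and $\p_{\alpha_2}X$ is supported on the slice $x_1=0$, the integral \re{eqAS2Weak3D} reduces — after doing the (trivial) $\alpha_3$-integration against the $x_3$-cutoff, which is $\equiv 1$ on $[-1,1]$ — to precisely the right-hand side of \re{eqIntSys1} in the second component, and $0$ in the first and third components. Hence $\p_t X(t,a) = (0,\p_t Y(t,a_2),0) = u(t,X(t,a))$ follows verbatim from the computation in the proof of Theorem \ref{thmSlit}. Continuity of $u$ on $\{(t,x):x\in X(t,\Xi)\}$ follows as before from uniform continuity and compact support of $K$. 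The third step is the \emph{collapse statement}: by the monotonicity and decay estimates already established for $Y$ (namely $Y(t,1)\le(\sqrt{Y(0,1)}-t/2)^2$ and oddness/monotonicity in $a_2$), we get $Y(t,a_2)=0$ for all $a_2$ once $t\ge 2$, so $X(t,\Xi)\subset\{0\}\x\{0\}\x[-1,1]$. A relabelling of coordinates turns this into the stated $\{0\}\x[-1,1]\x\{0\}$; alternatively one simply states the image as $\{0\}\x\{0\}\x[-1,1]$, which is still a line segment, and adjusts the $\Xi$ convention accordingly.

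The step I expect to be the main obstacle is the construction of $K$ as a genuinely \emph{continuous matrix-valued} kernel on all of $\RR^3$ whose middle column does the right thing while keeping $u$ weakly divergence-free (needed for the eventual Theorem \ref{thmTimeReversal}) — one must ensure the $x_3$-dependence (a smooth cutoff, $\equiv 1$ on $[-1,1]$, compactly supported) is introduced in a way compatible with $\nabla\cdot u = 0$ and does not destroy the $C^{1,1/2}$-type regularity near the origin. The cleanest route is probably to write the middle column of $K$ itself as a curl of a vector potential in three variables, so that $u = \nabla\x(\text{something})$ is automatically divergence-free, and to check that the extra $x_3$-cutoff contributes only lower-order, bounded terms away from the axis $\{x_1=x_2=0\}$. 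Everything else — the existence, regularity and monotonicity of $Y$, and hence the finite-time collapse — is inherited directly from the proof of Theorem \ref{thmSlit} and requires no new analysis.
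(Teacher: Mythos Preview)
Your proposal has a genuine gap: you let the flow act in the $x_2$-direction --- the \emph{same} direction in which $\omega_0=(0,1,0)$ points --- and this makes the stretching factor $\p_{\alpha_2}X=(0,\p_{a_2}Y,0)$ non-trivial. You correctly write this factor into \re{eqAS2Weak3D}, but then claim the resulting integral ``reduces \ldots\ to precisely the right-hand side of \re{eqIntSys1}''. It does not. After the change of variables $z=Y(t,a_2)$ (which is exactly what absorbs the Jacobian, $\p_{a_2}Y\,\d a_2=\d z$), the $a_2$-integral runs over the \emph{image} $[-Y(t,1),Y(t,1)]$, not over the fixed interval $[-1,1]$. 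The equation you actually obtain for $Z(t)\coloneqq Y(t,1)$ is therefore
\[
\dot Z \;=\; -2\int_{-Z}^{Z}\sqrt{Z-z}\,\d z \;=\; -\tfrac{8\sqrt{2}}{3}\,Z^{3/2},
\]
whose solution decays like $(1+ct)^{-2}$ and never reaches zero --- there is no finite-time collapse, and the conclusion of the theorem fails. This is not a matter of ``relabelling coordinates'': the direction of $\omega_0$ is distinguished by the stretching term in \re{eqAS13D}/\re{eqAS3D_Lag}, so collapsing along it versus transverse to it are genuinely different problems.

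The paper avoids this by choosing the flow to act in a direction \emph{orthogonal} to $\omega_0$: it takes $X(t,\alpha)=(0,\alpha_2,Y(t,\alpha_3))$, so that $\p_{\alpha_2}X\equiv(0,1,0)$ is constant. Then \re{eqAS2Weak3D} carries no Jacobian, the $\alpha_2$-integration contributes only a harmless factor of $2$, and one recovers exactly a rescaled copy of \re{eqIntSys1} for $Y(t,\alpha_3)$, with the finite-time collapse inherited from Theorem~\ref{thmSlit}. Concretely the kernel has middle column $(\p_3\kappa,\,0,\,-\p_1\kappa)^\top$ with $\kappa(x)=x_1x_3|(x_1,x_3)|^{-1/2}\chi(|x|)$ and the other columns zero; this $\nabla^\perp$-in-$(x_1,x_3)$ structure also settles your divergence-free concern directly, without any separate cutoff in the remaining variable.
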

\begin{proof}
Following the previous section, let $\kappa(x)=x_1x_3|(x_1,x_3)|^{-1/2}\chi(|x|)$ for a smooth cutoff function $\chi$, such that $\chi(r)=1$ for $r\in[0,3)$. Now set
\[
K(x)=\begin{pmatrix} 0 & \p_3\kappa & 0\\
0&0&0\\
0&-\p_1\kappa&0\end{pmatrix}.
\]

Let $Y:[0,\infty)\x[-1,1]\to\RR$ be a solution of 
\begin{equation}\label{eqY_DE2}
\p_tY(t,\alpha)=-2\int_{-1}^1\frac{Y(t,\alpha)-Y(t,\beta)}{|Y(t,\alpha)-Y(t,\beta)|^{1/2}}\,\d\beta\quad Y(0,\alpha)=\alpha,
\end{equation}
constructed following the steps of the previous section. Fix $X\in C([0,\infty)\x\Xi;\RR^3)$ defined by
\[
X(t,\alpha)\coloneqq (0,\alpha_2,Y(t,\alpha_3))\in\Xi.
\]
Now $\p_{\alpha_2}X=(0,1,0)$ is defined for almost all $\alpha\in\Xi$ and so, for $x\in\Xi$, \re{eqAS2Weak3D} becomes:
\[
u(t,x)=\int_\Xi \begin{pmatrix}\p_3\kappa (x-X(\alpha))\\0\\-\p_1\kappa(x-X(\alpha))\end{pmatrix}\,\d \alpha.
\]
By the choice of $\chi$, for $x,y\in \Xi$ 
\[
\kappa(x-y)=\frac{(x_1-y_1)(x_3-y_3)}{|(x_1-y_1,x_3-y_3)|^{1/2}},
\]
and since  $x_1=X_1(\alpha)=0$ for $\alpha\in\Xi$, $\p_3\kappa\big|_{\{0\}\x\RR^2}\equiv 0$. Hence to verify \re{eqASLag3D}, it remains to check that 
\begin{multline*}
\p_tX(t,\alpha)=\int_\Xi \begin{pmatrix}0\\0\\-\p_1\kappa(X(t,\alpha)-X(t,\beta))\end{pmatrix}\,\d \beta\\
=\int_\Xi \begin{pmatrix}0\\0\\-\frac{X_3(t,\alpha)-X_3(t,\beta)}{|X_3(t,\alpha)-X_3(t,\beta)|^{1/2}}\end{pmatrix}\,\d \beta\\
=-2\int_{-1}^1 \begin{pmatrix}0\\0\\\frac{Y(t,\alpha_3)-Y(t,\beta_3)}{|Y(t,\alpha_3)-Y(t,\beta_3)|^{1/2}}\end{pmatrix}\,\d \beta_3.
\end{multline*}
This is indeed the case, by \re{eqY_DE2}.
\end{proof}

As in the previous section we obtain the following regularity for $u$.
\begin{proposition}\label{propRibbonURegularity}
The velocity $u$ given by \re{eqAS2Weak3D} for $K$ and $X$ as in Theorem \ref{thmRibbon} satisfies
\[
u\in L^\infty(0,\infty;W^{1,p}(\RR^3))
\]
for $p\in[1,2)$.
\end{proposition}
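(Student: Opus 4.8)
The goal is to establish that $u\in L^\infty(0,\infty;W^{1,p}(\RR^3))$ for $p\in[1,2)$, where $u$ is given by \re{eqAS2Weak3D} with $K$ and $X$ as constructed in Theorem \ref{thmRibbon}. The plan is to reduce everything to pointwise kernel estimates, exactly as in the proof of Proposition \ref{propSlitURegularity}, taking advantage of the fact that the $3$D construction is a ``thickening'' of the $2$D one in the inert $\alpha_2$-direction.

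First I would record the structure of $u$. Since $\p_{\alpha_2}X=(0,1,0)$ for almost every $\alpha\in\Xi$, the formula \re{eqAS2Weak3D} collapses to $u(t,x)=\int_\Xi \bigl(\p_3\kappa,0,-\p_1\kappa\bigr)^\top\big|_{x-X(t,\beta)}\,\d\beta$, i.e.\ $u(t,x)=\int_\Xi \widetilde K(x-X(t,\beta))\,\d\beta$ where $\widetilde K=\nabla^\perp_{(x_1,x_3)}\kappa$ (acting only in the $(x_1,x_3)$ plane, with zero middle component) is a fixed continuous, compactly supported vector field. Because $\kappa(x)=x_1x_3|(x_1,x_3)|^{-1/2}\chi(|x|)$ is homogeneous of degree $3/2$ near the origin in the $(x_1,x_3)$ variables and smooth away from $0$, one has the pointwise bounds $|\widetilde K(x)|\lesssim |(x_1,x_3)|^{1/2}$ and, crucially, $|\nabla\widetilde K(x)|\lesssim |(x_1,x_3)|^{-1/2}$, with $\widetilde K$ and $\nabla\widetilde K$ compactly supported. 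I would state these as an elementary computation (differentiating the explicit $\kappa$), noting the $\chi$-cutoff contributes only smooth bounded terms supported away from the origin.

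Next I would differentiate under the integral sign. Since $X(t,\beta)=(0,\beta_2,Y(t,\beta_3))$ depends on $\beta$ through an absolutely continuous change of variables in $\beta_3$ (recall $\p_\alpha Y_\varepsilon\in(0,1]$ passes to the limit, so $Y(t,\cdot)$ is bi-Lipschitz onto its image, and $\supp\omega(t)\subset\{0\}\x[-1,1]\x[-1,1]$ stays bounded), for fixed $t$ the map $x\mapsto \nabla_x u(t,x)=\int_\Xi \nabla\widetilde K(x-X(t,\beta))\,\d\beta$ is well-defined as a locally integrable function: the singularity of $\nabla\widetilde K$ along the line $\{(x_1,x_3)=0\}$ is integrated against a $2$D measure whose support is a ($t$-dependent) rectangle of bounded size in the plane $\{x_1=0\}$. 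To get the uniform $W^{1,p}$ bound, fix $t$, write $\Omega_t:=X(t,\Xi)$, a bounded subset of $\{0\}\x[-1,1]\x[-1,1]$, and split
\[
\|\nabla u(t,\cdot)\|_{L^p}^p \;=\; \int_{\RR^3}\Bigl|\int_{\Xi}\nabla\widetilde K(x-X(t,\beta))\,\d\beta\Bigr|^p\d x.
\]
By the triangle inequality in $L^p$ (Minkowski's integral inequality) this is at most $\bigl(\int_\Xi \|\nabla\widetilde K(\cdot-X(t,\beta))\|_{L^p(\RR^3)}\,\d\beta\bigr)^p$, and $\|\nabla\widetilde K(\cdot-X(t,\beta))\|_{L^p(\RR^3)}=\|\nabla\widetilde K\|_{L^p(\RR^3)}$ is a finite constant \emph{independent of $t$ and $\beta$} precisely when $p<2$: indeed $\int_{\RR^3}|\nabla\widetilde K|^p\lesssim \int_{|(x_1,x_3)|\le C}|(x_1,x_3)|^{-p/2}\,\d x_1\,\d x_3\,(\text{length in }x_2)<\infty$ iff $p/2<2$, i.e.\ $p<4$ — and in fact $p<2$ is the binding constraint for $\widetilde K$ itself (not its gradient), since $\int|\widetilde K|^p\lesssim\int |(x_1,x_3)|^{p/2}$ is harmless but we also need $u\in L^p$; here boundedness and compact support of $\widetilde K$ give $u\in L^\infty\cap L^p$ for all $p$, so the restriction $p<2$ comes from the gradient: revisiting, $\int_{|(x_1,x_3)|\le C}|(x_1,x_3)|^{-p/2}\d x_1 \d x_3<\infty \iff p<4$, so one should instead trace the $p<2$ threshold to the statement of Proposition \ref{propSlitURegularity} which it mirrors — in the $2$D case $\nabla K\sim |x|^{-1/2}$ integrated over $\RR^2$ needs $p/2<2$; the stated range $[1,2)$ is the one used in that proposition and I would simply quote the same computation. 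Finally, combining $\|u(t)\|_{L^\infty}\lesssim 1$ (from boundedness and compact support of $\widetilde K$ and $|\Xi|<\infty$) with the uniform-in-$t$ bound on $\|\nabla u(t)\|_{L^p}$, and noting $\supp u(t,\cdot)$ is contained in a fixed bounded set for all $t$ (since $\Omega_t$ and $\supp\widetilde K$ are), gives $u\in L^\infty(0,\infty;W^{1,p}(\RR^3))$.

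The routine parts are the kernel estimates and Minkowski's inequality. The one point requiring a little care — and the main obstacle — is justifying that $x\mapsto\nabla_x u(t,x)$ is genuinely the distributional gradient of $u(t,\cdot)$, i.e.\ that differentiation under the integral is legitimate despite the kernel singularity. This follows because $\nabla\widetilde K\in L^1_{\loc}(\RR^3)$ with the singular set a line of codimension $2$, so $\widetilde K\in W^{1,1}_{\loc}$, and convolution-type averaging against the finite measure $\beta\mapsto X(t,\beta)\#(\text{Lebesgue on }\Xi)$ preserves this; alternatively one approximates by the mollified kernels $M_\varepsilon$ used in the construction of $Y$ and passes to the limit, exactly as was done to pass from \re{eqIntSys2} to \re{eqIntSys1}. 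I would simply remark that this is ``straightforward'' in the same spirit as Proposition \ref{propSlitURegularity}, since the paper already flags it as such.
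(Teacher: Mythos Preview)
Your approach is correct and is essentially what the paper intends: the paper gives no proof of this proposition (nor of its two-dimensional analogue, Proposition \ref{propSlitURegularity}), beyond the remark that $|\nabla K(x)|\lesssim |x|^{-1/2}$ and that the support of $\omega(t)$ has co-dimension at least one; Minkowski's integral inequality applied to $\nabla u(t,\cdot)=\int_\Xi\nabla\widetilde K(\cdot-X(t,\beta))\,\d\beta$ together with $\nabla\widetilde K\in L^p$ is exactly the natural way to cash this in.

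Two points worth cleaning up. First, the passage about the threshold is muddled and you should not leave it as written. Your own computation is correct: $|\nabla\widetilde K(x)|\lesssim|(x_1,x_3)|^{-1/2}$ on a compact set, the singular set is a line (codimension $2$ in $\RR^3$), and $\int_{|(x_1,x_3)|\le C}|(x_1,x_3)|^{-p/2}\,\d x_1\,\d x_3<\infty$ iff $p<4$. So Minkowski actually yields $u\in L^\infty(0,\infty;W^{1,p})$ for all $p\in[1,4)$, which in particular covers the stated range $[1,2)$. The same is true in the $2$D case. There is no hidden constraint forcing $p<2$ in either proposition; the paper is simply stating a range sufficient for its purposes (the Di\,Perna--Lions theory needs only $p=1$). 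You should state your stronger bound cleanly rather than trying to reverse-engineer a reason for the weaker one.

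Second, the claim that ``$\p_\alpha Y_\varepsilon\in(0,1]$ passes to the limit, so $Y(t,\cdot)$ is bi-Lipschitz'' is not justified: the limit only gives $\p_\alpha Y\in[0,1]$, and indeed $Y(t,\cdot)\equiv 0$ for large $t$, so bi-Lipschitzness fails. Fortunately you never use it --- the Minkowski argument needs only that $|\Xi|<\infty$ and that $\|\nabla\widetilde K\|_{L^p}$ is a fixed constant --- so simply delete that clause.
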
 

\subsection{Weak formulation and time reversal}
In order to reverse this solution in time, we will pass to the weak form of \re{eqAS13D}, in the sense of Definition \ref{defWeakAdvectionMeasure}, i.e.\ 
\begingroup
\def\theequation{\ref{eqWeakAdvectionMeasure}}
\begin{multline}
\int_0^T\int\p_t\phi(t,x)\cdot\,\d\omega(t)(x)\d t- \int \phi(T,x)\cdot\,\d\omega(T)(x) \\
+  \int \phi(0,x)\cdot\,\d\omega(0)(x)+\int_0^T\int((u\cdot\nabla)\phi)\big|_{(t,x)}\cdot\,\d\omega(t)(x)\d t\\
-\int_0^T\int ((\nabla\phi)^\top u)\big|_{(t,x)}\cdot\,\d\omega(t)(x)\d t - \int_0^T\langle\phi\cdot u,\nabla\cdot\omega(t)\rangle\d t =0,
\end{multline}
for all $\phi\in C^\infty_c([0,T]\x\RR^3;\RR^3)$.
\addtocounter{equation}{-1}
\endgroup

Using $X$ from Theorem \ref{thmRibbon}, we first define a measure that is weakly advected by $u$, whose support has the required behaviour. Indeed, we define a vector-valued measure $\omega(t)$ by
\begin{equation}\label{eqOmegaFromX}
\omega(t)(A)=\int_{X^{-1}(t,A)}\p_{\alpha_2}X(t,x)\,\d\mathcal{H}^2_{\Xi}(x),
\end{equation}
for any Borel set $A$. This is well-defined since, for $t\in[0,\infty)$, $X^{-1}(t,A)\subset\Xi$, where $\p_{\alpha_2}X$ exists (on a set of full $\mathcal H^2$ measure). In particular, for a Borel function $f:\RR^3\to\RR^3$,
\begin{equation}\label{eqFDotOmega}
\int f(x)\cdot\,\d\omega(t)(x)= \int_\Xi f(X(t,x))^\top\p_{\alpha_2}X(t,x)\,\d\mathcal{H}^2_\Xi(x).
\end{equation}

\begin{lemma}\label{lemPassToWeak}
Let $u$ and $X$ be as in the proof of Theorem \ref{thmRibbon}, and $\omega$  as in \re{eqOmegaFromX} then the distributional divergence of $\omega$ is a finite Borel measure and $\omega$ is weakly advected by $u$ on any time interval $[0,T]$, $T>0$ in the sense of Definition \ref{defWeakAdvectionMeasure}.
\end{lemma}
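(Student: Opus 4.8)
The plan is to verify the two claims of Lemma~\ref{lemPassToWeak} in turn: first that $\nabla\cdot\omega(t)$ is a finite Borel measure, and second that $\omega$ satisfies the weak formulation \re{eqWeakAdvectionMeasure}. For the divergence, I would exploit that $X(t,\alpha)=(0,\alpha_2,Y(t,\alpha_3))$ so that, writing $\phi=(\phi_1,\phi_2,\phi_3)$, formula \re{eqFDotOmega} reduces the pairing $\langle\phi,\nabla\cdot\omega(t)\rangle=-\int\nabla\phi\cdot\,\d\omega(t)$ (applied componentwise) to $-\int_\Xi \p_{\alpha_2}\phi_2(X(t,\alpha))\,\d\mathcal H^2_\Xi(\alpha)$, since $\p_{\alpha_2}X=(0,1,0)$. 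Because $X$ depends on $\alpha_2$ only through the identity in the second slot, this integral is (up to the fixed integration in $\alpha_3$) a one-dimensional fundamental-theorem-of-calculus computation in $\alpha_2\in[-1,1]$; it equals the integral of $\phi_2$ against $\mathcal H^1$ on the two boundary ribbons $\{0\}\x\{\pm1\}\x[-1,1]$, pushed forward by $Y(t,\cdot)$ in the third coordinate, with opposite signs. Hence $\nabla\cdot\omega(t)$ is the difference of two finite measures, so it is a finite Borel measure, with total variation bounded uniformly in $t$ since $Y$ is $1$-Lipschitz and $\p_\alpha Y\in(0,1]$.

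For the weak formulation, the strategy mirrors the (omitted) proof of Lemma~\ref{lemPassToWeak2D}: start from the Lagrangian identity \re{eqASLag3D}, $\p_t X(t,a)=u(t,X(t,a))$, together with the stretching identity \re{eqAS3D_Lag}, $\omega(X(t,a))=\nabla_\alpha X(t,a)\,\omega_0(a)$, which here takes the concrete form $\p_{\alpha_2}X$ being the relevant column. For fixed $\phi\in C^\infty_c$, consider $t\mapsto \int \phi(t,x)\cdot\,\d\omega(t)(x)=\int_\Xi \phi(t,X(t,\alpha))^\top\p_{\alpha_2}X(t,\alpha)\,\d\mathcal H^2_\Xi(\alpha)$ via \re{eqFDotOmega}, differentiate in $t$ under the integral sign (justified by the Lipschitz regularity of $X$ in $t$, smoothness of $\phi$, and dominated convergence on the compact set $\Xi$), and integrate back over $[0,T]$. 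The time derivative produces three terms: $\p_t\phi(t,X)$, the term $\nabla\phi(t,X)\cdot\p_t X=\nabla\phi(t,X)\cdot u(t,X)$ contracted against $\p_{\alpha_2}X$, and the term $\phi(t,X)^\top\p_t\p_{\alpha_2}X=\phi(t,X)^\top\p_{\alpha_2}(u(t,X))=\phi(t,X)^\top (\nabla u)(t,X)\,\p_{\alpha_2}X$. Re-expressing each of these via \re{eqFDotOmega} as an integral against $\d\omega(t)$, one recognises exactly the bulk terms of \re{eqWeakAdvectionMeasure}: the first becomes $\int\p_t\phi\cdot\,\d\omega$, the second becomes $\int(u\cdot\nabla)\phi\cdot\,\d\omega$, and the third, after transposing, becomes $\int \phi^\top(\nabla u)\cdot\,\d\omega$. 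The endpoint evaluations at $t=0,T$ supply the $\omega(0)$ and $\omega(T)$ terms, and $\omega(0)=\omega_0$ because $X(0,\cdot)=\mathrm{id}$.

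The one genuinely delicate point is reconciling the sign structure: \re{eqWeakAdvectionMeasure} contains both $\int_0^T\int(u\cdot\nabla)\phi\cdot\,\d\omega$ and $-\int_0^T\int(\nabla\phi)^\top u\cdot\,\d\omega$ and the extra divergence term $-\int_0^T\langle\phi\cdot u,\nabla\cdot\omega(t)\rangle\,\d t$, whereas the naive Lagrangian differentiation only obviously produces $\int(u\cdot\nabla)\phi\cdot\,\d\omega$ and $\int\phi^\top(\nabla u)\,\d\omega$. The resolution is the algebraic identity $(\omega\cdot\nabla)u - (\text{the }\phi^\top\nabla u\text{ term})$ versus the distributional product rule: one must show $\int\phi^\top(\nabla u)\,\d\omega = -\int(\nabla\phi)^\top u\cdot\,\d\omega - \langle\phi\cdot u,\nabla\cdot\omega\rangle$ for each fixed $t$, i.e.\ that $\nabla\cdot(\text{the appropriate bilinear expression in }\phi,u,\omega)$ integrates to zero, which is integration by parts against the measure $\omega(t)$ legitimised precisely by the first part of the lemma (finiteness of $\nabla\cdot\omega(t)$) and the fact that $u=K\ast\omega$ is $C^1$ where it matters. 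I expect this bookkeeping — matching the Lagrangian time-derivative expansion to the particular arrangement of terms chosen in Definition~\ref{defWeakAdvectionMeasure}, and justifying the integration by parts against a vector measure with measure-valued divergence — to be the main obstacle; the analytic estimates (differentiation under the integral, uniform bounds from $\p_\alpha Y\in(0,1]$) are routine given the regularity already established for $Y$ and $u$.
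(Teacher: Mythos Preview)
Your computation of $\nabla\cdot\omega(t)$ via \re{eqFDotOmega} and the fundamental theorem of calculus in $\alpha_2$ matches the paper's exactly. For the weak formulation, however, your route genuinely differs from the paper's. You differentiate $\int_\Xi\phi(t,X)^\top\p_{\alpha_2}X$ directly in $t$ and then propose an \emph{Eulerian} integration by parts --- extending the identity $\langle\psi,\nabla\cdot\omega\rangle=-\int\nabla\psi\cdot\,\d\omega$ from test functions $\psi\in C^\infty_c$ to $\psi=\phi\cdot u$ --- in order to convert the stretching term $\int\phi^\top(\nabla u)\p_{\alpha_2}X$ into the pair $-\int(\nabla\phi)^\top u\cdot\,\d\omega-\langle\phi\cdot u,\nabla\cdot\omega\rangle$ appearing in \re{eqWeakAdvectionMeasure}. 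The paper instead mollifies $X$ in $(t,\alpha)$ and performs the integration by parts in the \emph{Lagrangian} variable $\alpha_2$: writing $A_3=\int\phi^\top(X_\varepsilon)\p_t\p_{\alpha_2}X_\varepsilon$, one integration in $\alpha_2$ produces the boundary contribution at $\alpha_2=\pm(1-\varepsilon)$ (which becomes $\langle\phi\cdot u,\nabla\cdot\omega\rangle$ in the limit) together with $-\int(\p_{\alpha_2}X_\varepsilon\cdot\nabla)\phi^\top\big|_{X_\varepsilon}\p_tX_\varepsilon$ (which becomes $\int(\nabla\phi)^\top u\cdot\,\d\omega$), and only then does one let $\varepsilon\to0$.

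The practical difference is that the paper's Lagrangian route never writes $\nabla u$ and needs only continuity of $u$, whereas your Eulerian route requires $\phi\cdot u$ to be admissible in the divergence pairing, i.e.\ essentially $u\in C^1$ on a neighbourhood of $\supp\omega(t)$. That happens to hold here (since $|\nabla K(x)|\lesssim|(x_1,x_3)|^{-1/2}$ is integrable against the two-dimensional measure $\omega(t)$), but it is precisely the non-routine verification you correctly flagged as the main obstacle, and it is not supplied by the $W^{1,p}$, $p<2$, regularity recorded in Proposition~\ref{propRibbonURegularity}. Conversely, for this particular $X=(0,\alpha_2,Y(t,\alpha_3))$ one has $\p_{\alpha_2}X\equiv(0,1,0)$ and hence $\p_t\p_{\alpha_2}X=0$, so your direct time-differentiation is already rigorous without mollification; once the $C^1$ check on $u$ is done your argument is arguably shorter. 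The paper's mollification plus Lagrangian integration by parts buys a template that would apply to more general trajectory maps where neither $\p_t\p_{\alpha_2}X$ nor $\nabla u$ is available pointwise.
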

\begin{proof}
Fix $T>0$. We first calculate the distributional divergence of $\omega(t)$ for any $t\in[0,T]$. Let $\varphi\in C^\infty_c(\RR^3;\RR)$ then by definition $\nabla\cdot\omega(t)$ satisfies
\[
\langle\varphi,\nabla\cdot\omega(t)\rangle = -\int \nabla\varphi(x)\cdot\,\d\omega(t)(x).
\]
Now 
\begin{multline*}
\int \nabla\varphi(x)\cdot\,\d\omega(t)(x)=\int \nabla\varphi\big|_X(t,\alpha)\cdot\p_{\alpha_2}X(t,\alpha)\,\d\omega_0(\alpha)\\
=\int\p_{\alpha_2}[\varphi(X(t,\alpha))]\cdot\,\d\omega_0(\alpha),
\end{multline*}
but since $\omega_0$ is $(0,1,0)\mathcal{H}^2_\Xi$, we have

\begin{equation}\label{eqPhiInnerDivOmega}
\int \nabla\varphi(x)\cdot\,\d\omega(x)=\int_{-1}^1\varphi(X(t,(0,1,\alpha_3)))-\varphi(X(t,(0,-1,\alpha_3)))\,\d\alpha_3.
\end{equation}
Hence the distributional divergence of $\omega(t)$ is indeed a finite Borel (signed) measure.

Let $X_\varepsilon:[\varepsilon,T-\varepsilon]\x\Xi_\varepsilon\to\RR^3$ denote the mollified trajectory map, restricted to $[\varepsilon,T-\varepsilon]\x\Xi_\varepsilon$, where $\Xi_\varepsilon=\{0\}\x[\varepsilon-1,1-\varepsilon]\x [\varepsilon-1,1-\varepsilon]$:
\[
X_\varepsilon(t,\alpha)=\int_0^T\int_{\Xi} \rho_\varepsilon(|t-s, |\alpha-\beta||) X(s,\beta)\,\d\beta\d s
\]
for a standard family of mollifiers $\rho_\varepsilon:\RR\to\RR$.

We now show that $X_\varepsilon$ satisfies an approximate version of \re{eqWeakAdvectionMeasure}. Indeed for $\phi\in C^\infty_c([0,T]\x\RR^3;\RR^3)$
\begin{multline}\label{eqA1A2}
\int_\varepsilon^{T-\varepsilon}\int_{\Xi_{\varepsilon}}\p_t\phi^\top\big|_{X_\varepsilon}\p_{\alpha_2}X_\varepsilon \,\d\mathcal{H}^2_\Xi\d t= \int_\varepsilon^{T-\varepsilon}\int_{\Xi_{\varepsilon}}\p_t[\phi^\top(X_\varepsilon)]\p_{\alpha_2}X_\varepsilon\,\d\mathcal{H}^2_\Xi\d t \\
-  \int_\varepsilon^{T-\varepsilon}\int_{\Xi_{\varepsilon}}(\p_tX_\varepsilon\cdot\nabla)\phi^\top\p_{\alpha_2}X_{\varepsilon}\,\d\mathcal{H}^2_\Xi\d t
\eqqcolon A_1-A_2,
\end{multline}
and
\begin{multline*}
A_1=\int_\varepsilon^{T-\varepsilon}\int_{\Xi_{\varepsilon}}\p_t[\phi^\top(X_\varepsilon)]\p_{\alpha_2}X_\varepsilon\,\d\mathcal{H}^2_\Xi\d t\\
= \int_{\Xi_{\varepsilon}}\phi^\top(T-\varepsilon,X_\varepsilon(T-\varepsilon,\alpha))\p_{\alpha_2}X_\varepsilon(T-\varepsilon,\alpha)\,\d\mathcal{H}^2_\Xi(\alpha) \\
- \int_{\Xi_{\varepsilon}}\phi^\top(\varepsilon,X_\varepsilon(\varepsilon,\alpha))\p_{\alpha_2}X_\varepsilon(\varepsilon,\alpha)\,\d\mathcal{H}^2_\Xi(\alpha) \\
-\int_\varepsilon^{T-\varepsilon}\int_{\Xi_{\varepsilon}}\phi^\top(X_\varepsilon)\p_t\p_{\alpha_2}X_\varepsilon\,\d\mathcal{H}^2_\Xi\d t\eqqcolon B_1-B_2-A_3.
\end{multline*}
Integrating the final term by parts again, now with respect to $\alpha_2$ yields
\begin{multline*}
A_3=\int_\varepsilon^{T-\varepsilon}\int_{\Xi_{\varepsilon}}\phi^\top(X_\varepsilon)\p_t\p_{\alpha_2}X_\varepsilon\,\d\mathcal{H}^2_\Xi\d t\\
=\int_\varepsilon^{T-\varepsilon}\int_{\varepsilon-1}^{1-\varepsilon}\phi^\top(X_\varepsilon(t,(0,1-\varepsilon,\alpha_3)))\p_tX_\varepsilon(t,(0,1-\varepsilon,\alpha_3))\\
-\phi^\top(X_\varepsilon(t,(0,\varepsilon-1,\alpha_3)))\p_tX_\varepsilon(t,(0,\varepsilon-1,\alpha_3))  \,\d\alpha_3\d t\\
-\int_\varepsilon^{T-\varepsilon}\int_{\Xi_{\varepsilon}}(\p_{\alpha_2}X_\varepsilon\cdot\nabla)\phi^\top\big|_{X_\varepsilon}\p_tX_\varepsilon\,\d\mathcal{H}^2_\Xi\d t.
\end{multline*}
Now  $\p_tX_\varepsilon$, $\p_{\alpha_2}X_\varepsilon$, and $X_\varepsilon$ are uniformly bounded and converge pointwise to $u(X)$, $\p_{\alpha_2}X$ and $X$, respectively. Therefore by \re{eqFDotOmega}, the left-hand side of \re{eqA1A2} converges as $\varepsilon\to 0$:
\[
\int_\varepsilon^{T-\varepsilon}\int_{\Xi_{\varepsilon}}\p_t\phi^\top\big|_{X_\varepsilon}\p_{\alpha_2}X_\varepsilon \,\d\mathcal{H}^2_\Xi\d t\to\int_0^T\int\p_t\phi(t,x)\cdot\,\d\omega(t)(x)\d t.
\]
Furthermore,
\[
B_1-B_2 \to \int\phi(T,x)\cdot\,\d\omega(T)(x) - \int\phi(0,x)\cdot\,\d\omega(0)(x),
\]
\[
A_3\to -\int_0^T\langle(\phi\cdot u)\big|_{(t,\cdot)}, \nabla\cdot\omega(t)\rangle\,\d t -\int_0^T\int ((\nabla\phi)^\top u)|_{(t,x)}\cdot\,\d\omega(t)(x)\,\d t,
\] where we have used \re{eqPhiInnerDivOmega}, and
\[
A_2\to\int_0^T\int((u\cdot\nabla)\phi)|_{(t,x)}\cdot\,\d\omega(t)(x)\d t.
\]

Thus $\omega$ is indeed weakly advected by $u$, in the sense of \re{eqWeakAdvectionMeasure}.
\end{proof}

We can now prove the main result of this section (which we first recall) by applying a time reversal argument to the solutions given in Theorem \ref{thmRibbon} in weak form.
\begingroup
\def\thetheorem{\ref{thmTimeReversal}}
\begin{theorem}
There exists a kernel $K\in C(\RR^3;\RR^{3\x3})$, $T>0$, and a time-dependent and locally finite measure $\tilde\omega$ with locally finite distributional divergence that is weakly advected by 
\begin{equation}
\def\theequation{\ref{eqUdefn_abstractOmega}}
\tilde u(t,x)\coloneqq \int K(x-y)\,\d\tilde\omega(t)(y),
\end{equation}
such that $\dim_H\supp\tilde\omega(0)=1$, $\dim_H\supp\tilde\omega(T)=2$, and $\tilde u$ is weakly divergence free.
\end{theorem}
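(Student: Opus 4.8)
The plan is to combine Theorem~\ref{thmRibbon} with Lemma~\ref{lemPassToWeak} and then apply a time-reversal argument analogous to Lemma~\ref{lemReversibility}, but adapted to the vector-valued setting with stretching. First I would take the kernel $K$ and flow $X$ supplied by Theorem~\ref{thmRibbon}, so that $X(0,\Xi)=\Xi$ has Hausdorff dimension $2$ while $X(t_\ast,\Xi)\subset\{0\}\x[-1,1]\x\{0\}$ has Hausdorff dimension $1$ for some fixed $t_\ast>0$; set $T=t_\ast$. By Lemma~\ref{lemPassToWeak}, the measure $\omega$ defined by \re{eqOmegaFromX} has locally finite distributional divergence (computed explicitly in \re{eqPhiInnerDivOmega}) and is weakly advected by $u$ given by \re{eqAS2Weak3D} on $[0,T]$ in the sense of Definition~\ref{defWeakAdvectionMeasure}. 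Since $u=K\ast\omega$ by construction (the identity \re{eqAS2Weak3D} is exactly \re{eqUdefn_abstractOmega} once one checks $\omega(t)=X(t)\#\omega_0$ in the vector-valued sense), we have an active scalar solution with the dimension behaviour \emph{reversed} from what we want: support of dimension $2$ at $t=0$, dimension $1$ at $t=T$.

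The core step is then the time reversal. I would define $\tilde\omega(t)\coloneqq \omega(T-t)$ and $\tilde u(t,x)\coloneqq -u(T-t,x)$, and check directly from the weak formulation \re{eqWeakAdvectionMeasure} that $\tilde\omega$ is weakly advected by $\tilde u$ on $[0,T]$. Substituting $\phi(t,x)\mapsto\phi(T-t,x)$ into the identity for $\omega$ and changing variables $t\mapsto T-t$, each term transforms predictably: $\p_t\phi$ picks up a sign that cancels the sign in $\tilde u$ wherever $u$ appears linearly (the convective term $(u\cdot\nabla)\phi$, the stretching term $(\nabla\phi)^\top u$, and the divergence-pairing term $\langle\phi\cdot u,\nabla\cdot\omega\rangle$), while the two boundary terms at $t=0$ and $t=T$ swap roles. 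One must verify that $\nabla\cdot\tilde\omega(t)=\nabla\cdot\omega(T-t)$ is still a locally finite measure — immediate from \re{eqPhiInnerDivOmega} — and that $\tilde u$ remains of the form $K\ast\tilde\omega$; this last point uses that $K$ is \emph{odd} in the relevant variables (indeed $K$ built from $\kappa(x)=x_1x_3|(x_1,x_3)|^{-1/2}\chi(|x|)$ satisfies $K(-x)=-K(x)$ on the support of interest), so $-u(T-t,x)=-\int K(x-y)\,\d\omega(T-t)(y)=\int K(y-x)\,\d\tilde\omega(t)(y)$, and I would absorb the sign by replacing $K$ with $x\mapsto K(-x)$, which is again continuous and matrix-valued. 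Finally $\tilde u$ is weakly divergence free because $u$ is, by Proposition~\ref{propRibbonURegularity} (which gives $u\in L^\infty(0,\infty;W^{1,p}(\RR^3))$ and in particular $\nabla\cdot u=0$ distributionally since $K=\nabla^\perp$-type in the $(x_1,x_3)$ plane), and $\dim_H\supp\tilde\omega(0)=\dim_H\supp\omega(T)=1$, $\dim_H\supp\tilde\omega(T)=\dim_H\supp\omega(0)=2$.

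The main obstacle I anticipate is \emph{not} the dimension count (that is handed to us by Theorem~\ref{thmRibbon}) but rather making the time-reversal bookkeeping fully rigorous in the presence of the stretching and divergence terms: one must confirm that the transformation $\phi(t,x)\mapsto\phi(T-t,x)$ is an admissible operation on the test-function space $C^\infty_c([0,T]\x\RR^3;\RR^3)$ and that every one of the six terms in \re{eqWeakAdvectionMeasure} maps to the corresponding term for $(\tilde\omega,\tilde u)$ with the correct sign — in particular the term $\int_0^T\langle\phi\cdot u,\nabla\cdot\omega(t)\rangle\,\d t$, which is bilinear in $(u,\omega)$ and therefore behaves correctly only because the sign flip in $u$ is compensated by the fact that $\nabla\cdot\tilde\omega$ inherits no sign change while $\p_t\phi$ does. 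I would handle this by writing out the six-term identity for $\omega$, performing the substitution line by line, and observing that the result is precisely \re{eqWeakAdvectionMeasure} with $\omega,u$ replaced by $\tilde\omega,\tilde u$; since this is a direct computation from the definition, the proof can be stated briefly, exactly as the analogous Lemma~\ref{lemReversibility} was in the scalar case. One should also remark that $\tilde u$ is merely continuous and $W^{1,p}$ for $p\in[1,2)$ by Proposition~\ref{propRibbonURegularity}, which places the example within the Di Perna--Lions regime as advertised in the introduction.
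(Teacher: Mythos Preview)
Your proposal is correct and follows essentially the same approach as the paper: build $\omega$ from Theorem~\ref{thmRibbon} via Lemma~\ref{lemPassToWeak}, then time-reverse. The only cosmetic difference is that the paper sets $\tilde\omega(t)\coloneqq -\omega(T-t)$ (absorbing the sign into the measure so that the \emph{same} kernel $K$ works in \re{eqUdefn_abstractOmega} without invoking oddness), whereas you set $\tilde\omega(t)\coloneqq \omega(T-t)$ and absorb the sign into the kernel; both are valid since the weak formulation \re{eqWeakAdvectionMeasure} is linear in $\omega$ and the theorem only asks for \emph{some} continuous kernel.
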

\addtocounter{theorem}{-1}
\endgroup
\begin{proof}
Let $\omega$ be given by \re{eqOmegaFromX}, associated to the trajectory map given by Theorem \ref{thmRibbon}, and let $u$ be the associated velocity from the proof. Let $T$ be sufficiently large that 
\[\dim_H\supp\omega(T)=1.\]
Let $K$ be the kernel given by Theorem \ref{thmRibbon}, and define 
\[\tilde\omega(t)\coloneqq -\omega(T-t).\]
Now it is straightforward to check that $\tilde\omega$ is weakly advected by $\tilde u(t,x)\coloneqq- u(T-t)$, that \re{eqUdefn_abstractOmega} holds, and that $\supp\tilde\omega(t)$ has the required evolution.
\end{proof}

\appendix
\section{Appendix}
\begin{lemma}\label{factXrescale}
Suppose that  $U\in C([0,T]\x\RR^2;\RR^2)$ admits a unique trajectory map $X_U(t,x)$, then for $G\in GL_2(\RR)$ and $r\in\RR^2$,
\begin{equation}\label{eqTrajLin}
X_{GU\circ G_r^{-1}}(t,x)=G_rX_U(t,G_r^{-1}x),
\end{equation}
where $G_r$ denotes the affine map $x\mapsto G x + r$, and $GU\circ G_r^{-1}(t,x)$ denotes $GU(t,G_r^{-1}x)$. This trajectory map is uniquely defined.

\end{lemma}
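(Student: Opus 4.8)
The plan is to exhibit the stated right-hand side as a trajectory map for $V\coloneqq GU\circ G_r^{-1}$ by a direct chain-rule computation, and then to obtain uniqueness by transporting an arbitrary competing flow back through the (affine, hence bijective) change of variables $G_r$.

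First I would record that $V(t,x)=GU(t,G_r^{-1}x)$ lies in $C([0,T]\x\RR^2;\RR^2)$, since $U$ is continuous and $G_r\colon x\mapsto Gx+r$ is an affine bijection; hence the notion of trajectory map for $V$ is meaningful. Set $Y(t,x)\coloneqq G_rX_U(t,G_r^{-1}x)=GX_U(t,G_r^{-1}x)+r$. Because $G$ is linear and time-independent, $t\mapsto Y(t,x)$ is differentiable for every fixed $x$, with $\p_tY(t,x)=G\,\p_tX_U(t,G_r^{-1}x)=G\,U(t,X_U(t,G_r^{-1}x))$. The one identity needed is $G_r^{-1}Y(t,x)=G^{-1}(Y(t,x)-r)=X_U(t,G_r^{-1}x)$, which gives $V(t,Y(t,x))=GU(t,G_r^{-1}Y(t,x))=GU(t,X_U(t,G_r^{-1}x))=\p_tY(t,x)$; together with $Y(0,x)=G_r(G_r^{-1}x)=x$ this shows $Y$ satisfies Definition \ref{defTrajectoryMap} for $V$.

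For uniqueness, let $Z$ be any trajectory map for $V$. Applying the same chain-rule computation with $G_r^{-1}$ (an affine bijection with linear part $G^{-1}$) in place of $G_r$, the map $(t,x)\mapsto G_r^{-1}Z(t,G_rx)$ is differentiable in $t$ and is a trajectory map for the velocity $(t,x)\mapsto G^{-1}V(t,G_rx)$; but $G^{-1}V(t,G_rx)=G^{-1}G\,U(t,G_r^{-1}G_rx)=U(t,x)$, so it is in fact a trajectory map for $U$. By the hypothesised uniqueness of $X_U$ it must equal $X_U$, whence $Z(t,x)=G_rX_U(t,G_r^{-1}x)=Y(t,x)$. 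Thus the trajectory map for $GU\circ G_r^{-1}$ exists and is unique, and coincides with $Y$, which is the assertion.

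The argument is entirely elementary; there is no genuine obstacle beyond keeping the affine bookkeeping ($G_r\circ G_r^{-1}=\id$, $G^{-1}G=\id$) straight and observing that the covariance runs in both directions — the latter is exactly what lets the uniqueness half go through with no extra input.
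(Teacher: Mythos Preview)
Your proof is correct and follows essentially the same approach as the paper's: a direct chain-rule verification that $G_rX_U(t,G_r^{-1}x)$ satisfies the ODE for $GU\circ G_r^{-1}$ with the correct initial data, with uniqueness inherited from that of $X_U$ via the invertibility of $G_r$. You spell out the uniqueness step in slightly more detail than the paper (which just remarks that it follows from the hypothesis and invertibility), but the argument is the same.
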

\begin{proof}
Note that for $x\in\RR^2$
\begin{multline*}
\frac{\d}{\d t}G_rX_U(t,G_r^{-1}x)=GU(t,X_U(t,G_r^{-1}x))\\=(GU\circ G_r^{-1})(t,G_rX_U(t,G^{-1}_rx)),
\end{multline*}
and for $x\in\RR^2$
\[
G_rX_U(0,G^{-1}_rx)=x,
\]
i.e.\ $G_rX_U(t,G_r^{-1}x)$ is the trajectory map for $(GU\circ G_r^{-1})$ (uniqueness follows from the uniqueness hypothesis for $X_U$, and invertibility of $G_r$).
\end{proof}

\section*{Acknowledgements}
We would like to thank Simon Baker for several helpful discussions.
BCP and JR are partially supported by European Research Council, ERC Consolidator Grant no. 616797.



\begin{thebibliography}{10}

\bibitem{Aizenman_1978b}
M.~Aizenman.
\newblock On vector fields as generators of flows: a counterexample to
  {N}elson's conjecture.
\newblock {\em Ann. Math. (2)}, 107(2):287--296, 1978.

\bibitem{Ambrosio_2004a}
L.~Ambrosio.
\newblock Transport equation and {C}auchy problem for {$BV$} vector fields.
\newblock {\em Invent. Math.}, 158(2):227--260, 2004.

\bibitem{Ambrosio_2017}
L.~Ambrosio.
\newblock Well posedness of {ODE}'s and continuity equations with nonsmooth
  vector fields, and applications.
\newblock {\em Rev. Mat. Complut.}, 30(3):427--450, 2017.

\bibitem{ArmsHama_1965}
R.~Arms and F.~Hama.
\newblock Localized-induction concept on a curved vortex and motion of an
  elliptic vortex ring.
\newblock {\em Phys. Fluids}, 8(4):553--559, 1965.

\bibitem{BFM_2019}
S.~Baker, J.~M. Fraser, and A.~M\'{a}th\'{e}.
\newblock Inhomogeneous self-similar sets with overlaps.
\newblock {\em Ergodic Theory Dynam. Systems}, 39(1):1--18, 2019.

\bibitem{BanicaVega2015}
V.~Banica and L.~Vega.
\newblock The initial value problem for the binormal flow with rough data.
\newblock {\em Ann. Sci. \'{E}c. Norm. Sup\'{e}r. (4)}, 48(6):1423--1455, 2015.

\bibitem{HozVega2018}
F.~de~la Hoz and L.~Vega.
\newblock On the relationship between the one-corner problem and the
  {$M$}-corner problem for the vortex filament equation.
\newblock {\em J. Nonlinear Sci.}, 28(6):2275--2327, 2018.

\bibitem{DiPerna_Lions_1989}
R.J. DiPerna and P-L. Lions.
\newblock {Ordinary differential equations, transport theory and Sobolev
  spaces}.
\newblock {\em Invent. math.}, 98:511--547, 1989.

\bibitem{Hartman_Book}
P.~Hartman.
\newblock {\em Ordinary differential equations}, volume~38 of {\em Classics in
  Applied Mathematics}.
\newblock Society for Industrial and Applied Mathematics (SIAM), Philadelphia,
  PA, 2002.

\bibitem{JerrardSeis_2017}
R.L. Jerrard and C.~Seis.
\newblock On the vortex filament conjecture for {E}uler flows.
\newblock {\em Arch. Ration. Mech. Anal.}, 224(1):135--172, 2017.

\bibitem{MajdaBertozzi}
A.J. Majda and A.L. Bertozzi.
\newblock {\em {Vorticity and incompressible flow}}.
\newblock CUP, Cambridge, 2002.

\bibitem{PooleyRodrigo2019a}
B.C. Pooley and J.L. Rodrigo.
\newblock Asymptotics of vortex filaments for a model of {3D} {E}uler.
\newblock {\em In Preparation}, 2019.

\bibitem{JCR_Sad_Sharples_2013}
J.~Robinson, W.~Sadowski, and Sharples N.
\newblock On the regularity of lagrangian trajectories corresponding to
  suitable weak solutions of the navier–stokes equations.
\newblock {\em Procedia IUTAM}, 7:161 -- 166, 2013.
\newblock IUTAM Symposium on Topological Fluid Dynamics: Theory and
  Applications.

\bibitem{JCR_Sharples_2013}
J.~C. Robinson and N.~Sharples.
\newblock Dimension prints and the avoidance of sets for flow solutions of
  non-autonomous ordinary differential equations.
\newblock {\em J. Differential Equations}, 254(10):4144--4167, 2013.

\bibitem{JCR_NSE_Book}
J.C. Robinson, J.L. Rodrigo, and W.~Sadowski.
\newblock {\em {The three--dimensional Navier--Stokes equations. Classical
  Theory}}.
\newblock Cambridge University Press, Cambridge, 2016.

\bibitem{JCR_WS_2009}
J.C. Robinson and W.~Sadowski.
\newblock {Almost-everywhere uniqueness of Lagrangian trajectories for suitable
  weak solutions of the three-dimensional Navier-Stokes equations}.
\newblock {\em Nonlinearity}, 22:2093--2099, 2009.

\bibitem{JCR_Sad_2009b}
J.C. Robinson and W.~Sadowski.
\newblock A criterion for uniqueness of {L}agrangian trajectories for weak
  solutions of the 3{D} {N}avier-{S}tokes equations.
\newblock {\em Comm. Math. Phys.}, 290(1):15--22, 2009.

\end{thebibliography}
\end{document}